\documentclass[12pt]{article}

\RequirePackage{fancymath}

\usepackage{xcolor}
\usepackage{soul}
\usepackage{tikz}
\usetikzlibrary{patterns}
\usepackage{graphicx}
\usepackage{pgfplots}
\usepackage{latexsym,amsmath,graphics,amsthm,tikz,hyperref,float}
\usepackage[mathscr]{euscript}
\usepackage[affil-it]{authblk}
\usepackage{dsfont,pstricks}
\usepackage{latexsym,amsmath,amssymb,amscd,wrapfig}
\usepackage{hyperref,cleveref}
\usepackage{enumitem}
\usepackage{circuitikz}
\usepackage{bbm}

\makeatletter
\def\@maketitle{%
	\newpage
	\null
	\begin{center}%
		\let \footnote \thanks
		{\Large\bfseries \@title \par}%
		\vskip 1.5em%
		{\normalsize
			\lineskip .5em%
			\begin{tabular}[t]{c}%
				\@author
			\end{tabular}\par}%
		\vskip 0.8em%
		{\small \@date}%
	\end{center}%
	\par
	\vskip 1.5em}
\makeatother

\title{The sup-inf-completion of a Dedekind complete vector lattice}

\author{Eder Kikianty%
	\thanks{Email: \texttt{eder.kikianty@wits.ac.za}}}
\affil[1]{School of Mathematics, University of Witwatersrand, Private Bag 3, WITS 2050, South Africa}

\author[2]{Luan Naude%
	\thanks{Email: \texttt{luan.naude98@gmail.com}}}
\affil[2]{Department of Mathematics and Applied Mathematics, University of Pretoria, Private Bag X20, Hatfield 0028, South Africa}

\author{Mark Roelands%
	\thanks{Email: \texttt{m.roelands@math.leidenuniv.nl}}}
\affil[3]{Mathematical Institute, Leiden University, 2300 RA Leiden,
	The Netherlands}

\author[2]{Christopher Schwanke%
	\thanks{Email: \texttt{cmschwanke26@gmail.com}}}

\date{\today}

\begin{document}
	
	\maketitle
	
	\vspace{-.8cm}
	
	\begin{abstract}
		{We introduce the sup-inf-completion of a Dedekind complete vector lattice, an essentially unique extension in which every nonempty subset has both a supremum and an infimum. Since this completion is not a cone, we develop the more general framework of lattice stars, which provides the natural setting for its construction. We establish the fundamental properties of the sup-inf-completion, including a universal property, a representation theorem, and a characterization of its bands and band projections. As an application, we extend the Riemann integral on Dedekind complete $f$-algebras to Type I and Type II improper integrals. We conclude by showing that power series on universally complete vector lattices may be integrated term-by-term. }  
	\end{abstract}
	
	{\footnotesize {\bf Keywords:} Sup-inf completion, sup-completion, vector lattice, $f$-algebra, Riemann integral, power series} 
	
	{\footnotesize {\bf Subject Classification:} Primary: 46A40; Secondary:  46G12}
	
	\section{Introduction}
	
	Completions have proven to be pivotal concepts for several important developments in vector lattice theory, and the idea behind them is simple: if you are working with a vector lattice devoid of desired properties, construct instead a suitable larger space, one of minimal size, that possesses the desired qualities.
	
	Of all the completions of an Archimedean vector lattice, its \textit{universal completion} is the largest one which is itself a vector lattice, and it is also known to be vector lattice isomorphic to $C^\infty(K)$. It has the properties that (i) every nonempty, bounded above subset has a supremum, and (ii) every nonempty pairwise disjoint subset has a supremum. At this point, any further extensions in the pursuit of even more ambitious order-theoretic properties must come with trade-offs in algebraic structure. Indeed, in the \textit{sup-completion} of a Dedekind complete vector lattice, first constructed in \cite{Donner}, \textit{every} nonempty subset possesses a supremum. However, this space is not a vector lattice (as it is not a vector space), but rather, more restrictively, a lattice cone. In \cite[Theorem 1]{TroitskyPolavarapu}, it was shown that the sup-completion is isomorphic as a lattice cone to
	\[
	\{f \in C(K, \overline{\bR}) \colon \text{ there is } x \in E \text{ such that } x \leq f \}.
	\]
	
	While the sup-completion of a Dedekind complete vector lattice has pro-ven extremely useful, the pursuit of obtaining order-theoretic generalizations of improper integrals of the first kind--that is, basic integrals of the type $\int_{-\infty}^{+\infty} f(x)\, dx$--on Dedekind complete $f$-algebras (a natural extension of the Riemann integral introduced in \cite{paper2}), one would need to extend the sup-completion even further, with yet again another trade-off. It is the principal endeavor of this paper to construct this extension. Specifically, we construct what we call the \textit{sup-inf-completion} of a Dedekind complete vector lattice, which is a space for which any nonempty subset possesses both a supremum and an infimum. The necessary compromise in structure here is that the sup-inf-completion is not a cone (as it does not have an addition structure), but rather a set equipped with a scalar multiplication, which we call a \textit{star}.
	
	In Section 3, we formally introduce stars, ordered stars, lattice stars, distributive lattice stars, as well as their \textit{semistar} counterparts. Using these fundamentals, we construct the essentially unique sup-inf-completion of a Dedekind complete vector lattice in Section 4, and we provide several important properties of this space in Section 5.
	
	As mentioned previously, a primary motivation for constructing the sup-inf-completion is to obtain Type I improper integrals on Dedekind complete $f$-algebras, which generalize integrals of elementary functions over the entire real number line. Section 6 of this paper is dedicated precisely to this purpose. On the topic of improper integrals, we also construct an improper integral of the second kind in Section 7. Finally, as an application, we integrate power series on universally complete vector lattices in Section 8, showing that they can be integrated on a term-by-term basis.
	
	We proceed with some preliminaries.
	
	\section{Preliminaries}
	For any unexplained terminology or basic results in vector lattices and $f$-algebras, we refer the reader to the standard texts \cite{AliprantisBurkinshaw,zaanen1, depagter,zaanen2}.
	
	Throughout this paper, $E$ denotes an Archimedean vector lattice. We proceed by recalling several notions and facts that are repeatedly used in the sequel. We denote the positive cone of $E$ by $E^+$, and its negative cone by $E^-$. A positive $x \in E^+$ is called a \emph{weak order unit} if the band generated by $x$ in $E$ is all of $E$, that is $B_x = \{x\}^{dd} = E$. Equivalently, for any $y \in E^+$ we have that $\sup_{n \in \bN} y \wedge nx = y$. For $x,y \in E$, the notation $x \ll y$ means that $y-x$ is a weak order unit in $E$. A band $B$ in $E$ with the property that $E = B \oplus B^d$ is called a \emph{projection band}, and there exists an order projection $\mathbb{P}_B$ from $E$ onto $B$. A vector lattice in which every band is a projection band is said to have the \emph{projection property}. In particular, every Dedekind complete vector lattice has the projection property. For $a \le b$ we denote the order interval consisting of all elements $x \in E$ between $a$ and $b$ by 
	\[
	[a,b]:= \{x \in E \colon a \le x \le b\},
	\]
	and $(a,b)$ is defined by 
	\[
	(a,b):= \{x \in E \colon a \ll x \ll b\}.
	\]
	Variations of the endpoints such as $(a,b]$ and $[a,b)$ are similarly defined. A function $f \colon \mathrm{dom}(f) \to E$ with $\dom(f) \subseteq E$ is said to be \emph{order bounded} if there exists $M \in E^+$ such that $|f(x)| \le M$ for every $x \in \mathrm{dom}(f)$. A vector lattice $E$ equipped with a bilinear product such that $xy \in E^+$ whenever $x,y \in E^+$ is called an \emph{$f$-algebra} if $x \wedge y = 0$ implies $xz \wedge y = zx \wedge y = 0$ for all $z \in E^+$. Note that every Archimedean $f$-algebra is automatically commutative by \cite[Theorem~10.1]{depagter}. The remainder of the preliminary section is divided into three subsections. The band decomposition of $E$ in terms of inequalities and the Riemann integral are concepts based on current and ongoing research by the authors, and the notion of the sup-completion of a Dedekind complete vector lattice plays a central role in this paper. For these reasons, the topics are treated separately.      
	
	\subsection{Band decompositions of $E$ from inequalities}
	
	Given a vector lattice $E$ with the projection property, for $x,y \in E$ we define the bands $B_{x < y}$, $B_{x \leq y}$, and $B_{x = y}$, as was done in \cite[Notation 2.4]{paper1} and use them to decompose $E$. 
	
	\begin{definition}
		Let $E$ be a vector lattice with the projection property, and let $x, y \in E$. We define $B_{x < y}$ to be the band generated by $(y - x)^+$, and write $B_{x \leq y}$ for $B_{y < x}^d$. Finally, $B_{x = y}$ is defined to be $B_{x \leq y} \cap B_{y \leq x}$.
	\end{definition}
	
	The following is \cite[Proposition 2.6]{paper1}.
	
	\begin{proposition}
		Let $E$ be a vector lattice with the projection property, and let $x, y \in E$. Then
		\[ E = B_{x < y} \oplus B_{y \leq x} = B_{x < y} \oplus B_{y < x} \oplus B_{x = y}. \] 
	\end{proposition}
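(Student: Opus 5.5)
The proof rests on the projection property together with the basic lattice identities for $(y-x)^+$ and $(x-y)^+$. By definition, $B_{x<y}$ is the band generated by $(y-x)^+$ and $B_{y\le x} = B_{x<y}^d$. Since $E$ has the projection property, the band $B_{x<y}$ is a projection band. Hence $E = B_{x<y}\oplus B_{x<y}^d = B_{x<y}\oplus B_{y\le x}$, which is the first decomposition.

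For the second decomposition, I will split $B_{y\le x}$ further. The key observation is that $(y-x)^+ \wedge (x-y)^+ = (y-x)^+\wedge (y-x)^- = 0$, so the two generators are disjoint. Disjointness of generators passes to the principal bands they generate, so $B_{y<x}\subseteq B_{x<y}^d = B_{y\le x}$.

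Now $B_{y\le x}$ is itself a band in $E$, and $B_{y<x}$ is a projection band of $E$. I will show that $B_{y\le x} = B_{y<x}\oplus \big(B_{y\le x}\cap B_{y<x}^d\big)$. To see this, take $z\in B_{y\le x}$ and write $z = \mathbb{P}_{B_{y<x}}z + (z-\mathbb{P}_{B_{y<x}}z)$. The first summand lies in $B_{y<x}\subseteq B_{y\le x}$. Since $B_{y\le x}$ is a linear subspace, the second summand also lies in $B_{y\le x}$, and it lies in $B_{y<x}^d$ by construction. The sum is direct because $B_{y<x}\cap B_{y<x}^d=\{0\}$.

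It remains to identify the complementary piece. By definition, $B_{y<x}^d = B_{x\le y}$, so
\[
B_{y\le x}\cap B_{y<x}^d = B_{y\le x}\cap B_{x\le y} = B_{x=y}.
\]
Substituting this into the first decomposition gives $E = B_{x<y}\oplus B_{y<x}\oplus B_{x=y}$.

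The only step needing any care is the inclusion $B_{y<x}\subseteq B_{x<y}^d$. This follows from the standard fact that if $u\perp v$ then $\{u\}^{dd}\perp\{v\}^{dd}$: since $u\in\{v\}^d$ and $\{v\}^d$ is a band, we get $\{u\}^{dd}\subseteq \{v\}^{d}=\{v\}^{ddd}$. Everything else is formal bookkeeping with projection bands.
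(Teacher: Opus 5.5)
Your proof is correct and complete. The paper gives no argument of its own for this proposition; it imports it as \cite[Proposition 2.6]{paper1}, so there is no in-paper route to compare against. What you have written is a self-contained proof that uses only the projection property (applied to $B_{x<y}$ and to $B_{y<x}$) together with polar calculus.

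The one substantive step, $B_{y<x}\subseteq B_{x<y}^d$, is handled correctly. From $(x-y)^+=(y-x)^-\perp(y-x)^+$ you get
\[
B_{y<x}\subseteq\{(y-x)^+\}^d=\{(y-x)^+\}^{ddd}=B_{x<y}^d .
\]
Identifying the band generated by $u$ with $\{u\}^{dd}$ uses the Archimedean property, which is the paper's standing assumption and is implied by the projection property anyway. You do not actually need it: $B_{x<y}^d=\{(y-x)^+\}^d$ holds in any vector lattice, and $B_{y<x}$ is the smallest band containing $(x-y)^+$.

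The rest is the right bookkeeping. You split $B_{y\le x}$ using the band projection onto $B_{y<x}$, and you read off $B_{y\le x}\cap B_{y<x}^d=B_{y\le x}\cap B_{x\le y}=B_{x=y}$ directly from the definitions. The three resulting bands are pairwise disjoint, so the three-term sum is an order direct sum, as the statement intends.
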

	
	To motivate the notation, these bands can also be characterized as in \cite[Proposition 2.8]{paper1}.
	
	\begin{proposition} \label{p:inequality_decomposition}
		Let $E$ be a vector lattice with the projection property, and let $x, y \in E$. Then
		\begin{enumerate}[(i)]
			\item $B_{x \leq y}$ is the largest band for which the corresponding band projection $\bP$ satisfies $\bP(x) \leq \bP(y)$,
			\item $B_{x = y}$ is the largest band for which the corresponding band projection $\bP$ satisfies $\bP(x) = \bP(y)$, and
			\item $B_{x < y}$ is the largest band for which $\bP(y - x)$ is a weak order unit in $B$.
		\end{enumerate}
	\end{proposition}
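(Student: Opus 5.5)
All three parts reduce to the decomposition $E = B_{x<y}\oplus B_{y<x}\oplus B_{x=y}$ of the preceding proposition, together with two standard facts about band projections: a band projection $\bP$ is a lattice homomorphism with $\bP \le I$, and $\bP_B\bP_C = \bP_{B\cap C}$. The plan for each part has two steps. First I show that the band in question has the stated property. Then I show that any band $B$ with that property is contained in it. For the second step it suffices to show $B \perp$ the complementary band, or equivalently $\bP_B \bP_{C} = 0$ for the complement $C$.

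For (i), let $\bP$ be the projection onto $B_{x\le y} = B_{y<x}^d$. Then $\bP(x-y)^+ = 0$, since $(x-y)^+\in B_{y<x}$. Because $\bP$ is a lattice homomorphism, $(\bP x-\bP y)^+ = 0$, that is, $\bP x\le \bP y$. Conversely, suppose $B$ is a band whose projection $\bP_B$ satisfies $\bP_B x\le\bP_B y$. Then $\bP_B(x-y)^+ = (\bP_B x-\bP_B y)^+ = 0$, so $(x-y)^+\in B^d$. Hence $B_{y<x}\subseteq B^d$, and therefore $B\subseteq B^{dd}\subseteq B_{y<x}^d = B_{x\le y}$.

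For (ii), intersect the two bands from (i). Since $\bP_{B_{x\le y}\cap B_{y\le x}} = \bP_{B_{x\le y}}\bP_{B_{y\le x}}$, this projection yields both inequalities and hence equality. For maximality, any $B$ with $\bP_B x=\bP_B y$ satisfies both inequalities, so (i) applied twice gives $B\subseteq B_{x\le y}\cap B_{y\le x}$.

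For (iii), take $B = B_{x<y}$, the band generated by $(y-x)^+$. Then $\bP(y-x) = \bP(y-x)^+ - \bP(y-x)^- = (y-x)^+$, because $(y-x)^-\perp(y-x)^+$ and $(y-x)^-$ therefore lies in $B^d$. Since $(y-x)^+$ generates $B$, it is a weak order unit of $B$.

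Maximality in (iii) is the main obstacle, because the meaning of "weak order unit in $B$" has to be pinned down. A weak order unit is positive, so if $\bP_B(y-x)$ is one, then $\bP_B(y-x)^- = (\bP_B(y-x))^- = 0$, which gives $(y-x)^-\in B^d$. Moreover, $\bP_B(y-x) = \bP_B(y-x)^+$ generates $B$, so every element of $B$ lies in the band generated by $(y-x)^+$. This holds because $\bP_B(y-x)^+\le (y-x)^+$, so any $z\in B$ disjoint from $(y-x)^+$ is also disjoint from $\bP_B(y-x)^+$ and hence equals $0$. Therefore $B\subseteq \{(y-x)^+\}^{dd} = B_{x<y}$.

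The point needing care is this disjointness argument: one must ensure that the band generated inside $B$ coincides with the band generated in $E$. This follows because $B$ is a band, so $B\cap\{u\}^{d} = 0$ in $B$ forces $B\subseteq\{u\}^{dd}$ in $E$.
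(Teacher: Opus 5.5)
Your proof is correct. The paper does not prove this proposition itself; it quotes it from \cite[Proposition~2.8]{paper1}. So there is no in-paper argument to compare your route against, and your write-up serves as a complete, self-contained verification from the definitions.

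It uses only the following facts about a band projection $\bP_B$: it is a lattice homomorphism, it satisfies $0\le \bP_B x\le x$ for $x\ge 0$, its kernel is $B^d$, and band projections commute. Together with $B=B^{dd}$ for projection bands, this is exactly what (i)--(iii) require.

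One justification is terser than it should be: the final step of (iii). There you pass from $B\cap\{u\}^d=\{0\}$ to $B\subseteq\{u\}^{dd}$ ``because $B$ is a band.'' The cleanest reason is that this holds for any ideal $B$. If $z\in B$ and $w\in\{u\}^d$, then $|z|\wedge|w|$ lies in both ideals and is therefore $0$, so $z\perp w$. Stating it this way removes any worry about comparing the band generated inside $B$ with the band generated in $E$.
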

	
	\subsection{The Riemann integral}
	
	We recall the definition of the Riemann integral for a locally band preserving function $f \colon [a, b] \to E$, where $[a, b]$ is an order interval in $E$. This construction was originally done in \cite{paper2}.

	\begin{definition}
		Let $E$ be a Dedekind complete vector lattice. A function $f \colon \dom(f) \to E$ with $\dom(f) \subseteq E$ is said to be \emph{locally band preserving} if whenever $\bP(x) = \bP(y)$ for some $x, y \in \dom(f)$ and any band projection $\bP$, we have that $\bP(f(x)) = \bP(f(y))$.
	\end{definition}
	
	\begin{definition}
		Let $E$ be a Dedekind complete $f$-algebra and let $a,b \in E$ with $a \le b$. Furthermore, let $f \colon [a, b] \to E$ be locally band preserving and order bounded. A \emph{partition} $P$ of $[a, b]$ is a totally ordered subset containing $a$ and $b$, i.e. $P = \{a = x_0, \dots, x_n = b \}$. The \emph{lower} and \emph{upper sums} of $f$ with respect to $P$ are given by
		\begin{align*}
			L(f, P) &:= \sum_{i = 1}^n \left( \inf_{x \in [x_{i-1}, x_i]} f(x) \right)(x_i - x_{i-1}), \text{ and} \\
			U(f, P) &:= \sum_{i = 1}^n \left( \sup_{x \in [x_{i-1}, x_i]} f(x) \right)(x_i - x_{i-1}).
		\end{align*}
		The \emph{lower} and \emph{upper integrals} of $f$ are defined by
		\begin{align*}
			L(f) & := \sup \{L(f, P) \colon P \text{ is a partition of } [a, b] \}, \text{ and} \\
			U(f) & := \inf \{ U(f, P) \colon P \text{ is a partition of } [a, b] \}.
		\end{align*}
		If $L(f) = U(f)$, we say that $f$ is \emph{(Riemann) integrable} over $[a, b]$ and write
		\[ \int_a^b f(x) dx := L(f) = U(f). \]
	\end{definition}

	\subsection{The sup-completion of a Dedekind complete vector lattice}
	
	We provide a condensed overview of the sup-completion of a Dedekind complete vector lattice, as introduced in \cite[Section 1]{Donner}.
	\begin{definition}
		A \emph{cone} $C$ is a commutative monoid with unit $\overline{0}$ that is equipped with a nonnegative scalar multiplication $\bR^+ \times C \to C$, $(\lambda, x) \mapsto \lambda x$, such that
		\begin{enumerate}[(i)]
			\item $\lambda (x + y) = \lambda x + \lambda y$ for $\lambda \in \bR^+$ and $x,y \in C$,
			\item $(\lambda + \mu) x = \lambda x + \mu x$, for $\lambda, \mu \in \bR^+$ and $x \in C$,
			\item $\lambda (\mu x) = (\lambda \mu) x$ for $\lambda \mu \in \bR^+$ and $x \in C$,
			\item $1x = x$ for $x \in C$, and
			\item $0x = \overline{0}$ for $x \in C$.
		\end{enumerate}
		
		If $C$ is partially ordered such that
		\begin{enumerate}[(i)] \addtocounter{enumi}{5}
			\item $x \leq y$ implies $x + z \leq y + z$ for $x, y, z \in C$, and
			\item $x \leq y$ implies $\lambda x \leq \lambda y$ for $\lambda \in \bR^+$,
		\end{enumerate}
		then $C$ is called an \emph{ordered cone}. If, additionally, $C$ is a lattice with respect to $\leq$, then $C$ is called a \emph{lattice cone}. A lattice cone $C$ is said to be \emph{Dedekind complete} if every nonempty subset of $C$ which is bounded above (respectively, bounded below) has a supremum (respectively, infimum) in $C$. In what follows, the element $\overline{0}$ is denoted simply by $0$, and $C_0$ denotes the set of invertible elements of $C$, that is, the elements of the cone $C$ that have an additive inverse.
	\end{definition}
	
	\begin{proposition}
		Given a Dedekind complete vector lattice $E$, there exists an essentially unique cone $C$ such that
		\begin{enumerate}[(i)]
			\item $C$ is Dedekind complete,
			\item $E = C_0$ with coinciding algebraic and order structures,
			\item for every $y \in C$, we have $y = \sup \{x \in E \colon x \leq y \}$,
			\item $z + (x \wedge y) = (z + x) \wedge (z + y)$ for $x \in E$, $y, z \in C$,
			\item if $x \in E$, $y \in C$, and $y \leq x$, then $y \in E$,
			\item $C$ has a largest element, and
			\item if $A, B \subseteq C$ are nonempty sets such that $\sup A = \sup B$, then we also have $\sup(A \wedge x) = \sup(B \wedge x)$ for all $x \in E$.
		\end{enumerate}
	\end{proposition}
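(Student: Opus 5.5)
We construct $C$ concretely and then check (i)--(vii) one at a time; uniqueness comes at the end. Since $E$ is Dedekind complete, Maeda--Ogasawara gives a representation $E \hookrightarrow C^\infty(K)$ as an order dense ideal, where $K$ is the Stone space of the Boolean algebra of bands of $E$, an extremally disconnected compact Hausdorff space. Define
\[
C := \{ f \in C(K,\overline{\bR}) \colon f \geq x \text{ for some } x \in E \},
\]
so $C$ consists of continuous extended-valued functions that may take the value $+\infty$ on a closed set, but take $-\infty$ only where some $x \in E$ already does (a nowhere dense set). The operations are as follows. Addition is pointwise on the dense set where both summands are finite, extended by continuity, using $+\infty + a = +\infty$ for $a > -\infty$. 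Scalar multiplication is pointwise, with the convention $0\cdot(\pm\infty) = 0$. The order is pointwise.

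Properties (ii), (iii), (v) and (vi) follow from the structure of $C(K,\overline{\bR})$.
\begin{itemize}
\item For (ii), an element $f \in C$ has an additive inverse in $C$ exactly when $f$ is finite on a dense set and $-f$ is bounded below by some element of $E$. Since $E$ is an ideal in $C^\infty(K)$, this happens exactly when $f \in E$.
\item For (iii), write $y = \sup\{x \in E \colon x \le y\}$. Since $E$ is order dense in $C^\infty(K)$, truncations $y \wedge n e$ on projection bands approximate $y$ from below.
\item For (v), if $y \le x \in E$ and $y \ge x'$ for some $x' \in E$, then $y$ lies in $[x',x]$. This interval sits inside the ideal $E$, so $y \in E$.
\item For (vi), the largest element is the constant $+\infty$.
\end{itemize}
The cone axioms and the compatibility of the order with addition and scalar multiplication are checked pointwise off a meagre set.

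Property (i), Dedekind completeness, uses extremal disconnectedness. For a nonempty set $A \subseteq C$, the pointwise supremum $g$ is lower semicontinuous. The regularization $\tilde g$, the smallest continuous function above $g$, is continuous into $\overline{\bR}$ because $K$ is extremally disconnected, and $\tilde g$ is the supremum of $A$ in $C$. Every subset thus has a supremum, which is more than (i) requires. For an infimum of a set bounded below by some element of $C$, the lower bound is in turn bounded below by an element of $E$. We take the upper semicontinuous pointwise infimum and regularize it dually, and the result stays in $C$.

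Properties (iv) and (vii) are the main obstacle, because suprema in $C$ are regularized pointwise suprema rather than pointwise suprema. For (iv) with $x \in E$, both sides of $z + (x \wedge y) = (z + x) \wedge (z + y)$ agree pointwise on the dense set where $x$, $y$ and $z$ are finite. Two continuous functions agreeing on a dense set are equal, so (iv) holds. For (vii), we first show that $\widetilde{g \wedge x} = \tilde g \wedge x$ for a lower semicontinuous $g$ and a continuous, almost everywhere finite $x$. Since $\tilde g$ and $g$ agree off a meagre set, the two sides agree on a comeagre set. Both sides are continuous, and $K$ is a Baire space, so they coincide. Consequently $\sup(A \wedge x) = (\sup A) \wedge x$ in $C$, and (vii) follows immediately.

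For essential uniqueness, let $C'$ be any cone with (i)--(vii). By (iii), every element of $C'$ is determined by the downward set $\{x \in E \colon x \le y\}$. Define $\Phi(y) := \sup_C \{x \in E \colon x \le y\}$. Property (vii) ensures that $\Phi$ respects the lattice operations with elements of $E$, and it also ensures that the supremum depends only on the generated set. Properties (iv) and (v) then give additivity, so $\Phi$ is an isomorphism of lattice cones fixing $E$.
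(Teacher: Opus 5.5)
Your existence argument is essentially correct, and it takes a different route from the paper. The paper does not prove this proposition at all: it quotes it from Donner's abstract construction and cites Polavarapu--Troitsky separately for the function-space representation. You instead verify (i)--(vii) directly in the Polavarapu--Troitsky model $\{f\in C(K,\overline{\mathbb{R}})\colon f\ge x \text{ for some } x\in E\}$. In that model suprema are regularised pointwise suprema and (vii) becomes a Baire-category statement, which makes the axioms transparent at the price of Maeda--Ogasawara and extremal disconnectedness.

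Two slips in the existence part need repair; write $D_y:=\{x\in E\colon x\le y\}$.
\begin{enumerate}[(a)]
\item The set where both summands are finite is not dense when a summand is $+\infty$ on an open set, e.g.\ the largest element. Add instead on the dense open set where both summands are $>-\infty$, where addition into $(-\infty,+\infty]$ is continuous. Then extend, using that open subsets of an extremally disconnected space are $C^*$-embedded. The same correction is needed in (iv).
\item In (iii) there is no unit $e$. Argue locally: if $\sup_C D_y\ne y$, there is a nonempty clopen $V$ and $r\in\mathbb{R}$ with $\sup_C D_y<r<y$ on $V$. Order density gives a nonempty clopen $W\subseteq V$ with $\mathbf{1}_W\in E$. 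Then $x_0\mathbf{1}_{K\setminus W}+r\mathbf{1}_W\in D_y$ (for any $x_0\in D_y$) exceeds $\sup_C D_y$ on $W$, a contradiction.
\end{enumerate}

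The genuine gap is in the uniqueness part, at the claim that (iv) and (v) give additivity. Your use of (vii) for the order structure is the right idea. Apply (vii) in $C'$ with $B=\{y\}$ and in $C$ with $B=\{\Phi(y)\}$, and use (v) to see that suprema of $E$-bounded subsets of $E$ agree in both cones. This gives $\Phi(y)\wedge x=y\wedge x$ for all $x\in E$, hence $D_{\Phi(y)}=D_y$. So $\Phi$ is an order isomorphism fixing $E$; surjectivity comes from the symmetric map $C\to C'$, which uses (i) and (vi).

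Additivity, however, reduces to the identity $y+y'=\sup_{C'}(D_y+D_{y'})$ in an \emph{arbitrary} cone $C'$ satisfying (i)--(vii). This identity is not automatic. Translation by an invertible element is an order automorphism and commutes with suprema, but translation by a non-invertible $y'$ need not, so you cannot move $y'$ inside $\sup D_y$. In your model the identity holds because addition is pointwise. In $C'$ it must be derived from the axioms, and (iv) and (v) alone do not do it; (vii) is needed again.

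Here is one way to close the gap. Put $s:=\sup_{C'}(D_y+D_{y'})$, so that $s\le y+y'$, and fix $e\in D_{y+y'}$ and $x\in E$.
\begin{enumerate}[(a)]
\item By (v), $x\wedge y\in D_y$. By (iii) and translation by the invertible element $x\wedge y$, we get $y'+(x\wedge y)=\sup_{b\in D_{y'}}\bigl(b+(x\wedge y)\bigr)\le s$.
\item Then (iv) gives
\[ e\wedge(y'+x)=e\wedge(y'+y)\wedge(y'+x)=e\wedge\bigl(y'+(x\wedge y)\bigr)\le s. \]
\item Since $y'$ dominates some element of $E$, $\sup_{x\in E}(y'+x)=\sup E$, which is the largest element by (iii) and (vi).
\item Hence (vii) yields $e=\sup_{x\in E}(e\wedge x)=\sup_{x\in E}\bigl(e\wedge(y'+x)\bigr)\le s$.
\end{enumerate}
By (iii), $y+y'\le s$. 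Applying the same identity in $C$, and using that $\Phi$ preserves suprema of subsets of $E$, gives $\Phi(y+y')=\Phi(y)+\Phi(y')$. Without an argument of this kind, essential uniqueness is asserted rather than proved.
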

	
	This cone $C$ is called the \emph{sup-completion} of $E$ and is denoted by $E^s$. Analogously, one can also construct an inf-completion of $E^i$ of $E$, which is lattice cone isomorphic to $E^s$ with its ordering reversed. 
	
	\begin{remark}
		By \cite[Proposition 3]{Azouzi}, any band projection $\bP$ on $E$ can be extended to a left-order continuous, additive, increasing, and positively homogeneous map from $E^s$ to $E^s$. We denote this extension by $\bP$ as well. The extension is given by
		\[ \bP(x) := \sup \{ \bP(y) \colon y \in E, y \leq x \}. \]
	\end{remark}
	
	\Cref{l:sup_inf_representation} is \cite[Lemma 11(ii, iii)]{AzouziSI1}, where the nets are assumed to be positive. The proofs hold without modification even if the nets are not positive.  We use \Cref{l:sup_inf_representation} to prove needed properties of extended band projections in \Cref{l:sup_completion_projection_distribute_sup_inf}.
	
	\begin{lemma} \label{l:sup_inf_representation}
		Let $(x_\alpha)$ and $(y_\beta)$ be two nets in $E^s$, and let $x := \sup_\alpha x_\alpha$ and $y := \sup_\beta y_\beta$. Then the following statements hold.
		\begin{enumerate}[(i)]
			\item $x \vee y = \sup_{\alpha, \beta} (x_\alpha \vee y_\beta)$.
			\item $x \wedge y = \sup_{\alpha, \beta} (x_\alpha \wedge y_\beta)$.
		\end{enumerate}
	\end{lemma}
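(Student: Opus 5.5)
The plan is to prove (i) directly from the definition of supremum in $E^s$, and to prove (ii) by reducing it to elements of $E$. For (ii) I would use property (iii) of the sup-completion (every element is the supremum of the elements of $E$ below it) together with the distributivity property (vii).

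\textbf{Part (i).} For every pair $(\alpha,\beta)$ we have $x_\alpha \vee y_\beta \le x \vee y$, so $\sup_{\alpha,\beta}(x_\alpha\vee y_\beta) \le x\vee y$. Conversely, let $s:=\sup_{\alpha,\beta}(x_\alpha\vee y_\beta)$. Fixing $\beta$, each $x_\alpha \le x_\alpha\vee y_\beta \le s$, so $x\le s$; likewise $y\le s$. Hence $x\vee y\le s$. This part is purely lattice-theoretic and uses only that $E^s$ is a complete lattice.

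\textbf{Part (ii), easy inequality.} The inequality $\sup_{\alpha,\beta}(x_\alpha\wedge y_\beta)\le x\wedge y$ is immediate, since each term is below both $x$ and $y$. The substance is the reverse inequality, i.e.\ that $\wedge$ distributes over arbitrary suprema in $E^s$.

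\textbf{Part (ii), reverse inequality.} I would do this in two steps, each varying one argument at a time.

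\emph{Step 1: one argument fixed in $E$.} I first show $u\wedge y = \sup_\beta (u\wedge y_\beta)$ for $u\in E$. Take $A=\{y_\beta\}$ and $B=\{y\}$ in property (vii). Since $\sup A=\sup B$, we get $\sup_\beta(y_\beta\wedge u) = y\wedge u$.

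\emph{Step 2: general $x$.} By property (iii), $x=\sup\{u\in E: u\le x\}$. Apply (vii) again, now with the element $w:=\sup_\beta(x\wedge y_\beta)$ in the role of $y$. The problem is that (vii) requires the element meeting the set to lie in $E$, while here it may not. So instead I would argue via property (iii) applied to $x\wedge y$: it suffices to show every $v\in E$ with $v\le x\wedge y$ satisfies $v\le \sup_{\alpha,\beta}(x_\alpha\wedge y_\beta)$. For such $v$ we have $v = v\wedge x = \sup_\alpha (v\wedge x_\alpha)$ by Step 1. Each $v\wedge x_\alpha$ is $\le v$ and hence lies in $E$ by property (v). Then
\[
v\wedge x_\alpha = (v\wedge x_\alpha)\wedge y = \sup_\beta (v\wedge x_\alpha\wedge y_\beta)\le \sup_{\alpha,\beta}(x_\alpha\wedge y_\beta),
\]
again by Step 1, now with $v\wedge x_\alpha\in E$ as the fixed element. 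Taking the supremum over $\alpha$ gives the claim.

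\textbf{Main obstacle.} The delicate point is Step 2. Property (vii) only permits meeting with an element of $E$, not an arbitrary element of $E^s$. The device of testing against $v\in E$ below $x\wedge y$, combined with (v) to keep the intermediate meets inside $E$, is what I expect to make this work. The rest is routine bookkeeping with suprema. Positivity of the nets is never used, which matches the remark that the original proofs carry over unchanged.
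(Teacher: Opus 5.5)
Your proof is correct, and it takes a different route from the paper only because the paper gives no argument of its own. It quotes the lemma from Azouzi--Nasri (Lemma~11(ii),(iii), stated there for positive nets) and asserts that the cited proofs carry over without positivity.

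You instead derive the lemma directly from the listed axioms of the sup-completion:
\begin{itemize}
\item Part (i) is pure order theory.
\item For part (ii), you first obtain $u\wedge\sup_\beta y_\beta=\sup_\beta(u\wedge y_\beta)$ for $u\in E$ from property (vii), taking $B=\{\sup A\}$.
\item You then pass to a general $x\in E^s$ by testing against elements $v\in E$ with $v\le x\wedge y$, using property (iii).
\item Property (v) keeps $v\wedge x_\alpha$ inside $E$, so the first step can be applied a second time.
\end{itemize}

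What your route buys is transparency. It visibly uses neither positivity nor monotonicity of the nets, which is exactly what the paper leaves the reader to check in the external source. It also yields the full infinite distributive law $z\wedge\sup A=\sup(z\wedge A)$ for every $z\in E^s$ and nonempty $A\subseteq E^s$. The paper uses this law later, for instance when it shows that $E^s_+\Delta E^s_+$ is Dedekind complete. The citation, by contrast, buys brevity.

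Two cosmetic corrections:
\begin{itemize}
\item $E^s$ is not a complete lattice. For example, $\mathbb{R}^s=\mathbb{R}\cup\{+\infty\}$ has no least element. In (i), say instead that every nonempty subset of $E^s$ has a supremum, which follows from Dedekind completeness together with the existence of a largest element.
\item Delete the abandoned first attempt in Step 2 (applying (vii) with $w$), since the argument you actually give does not use it.
\end{itemize}
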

	
	\begin{lemma} \label{l:sup_completion_projection_distribute_sup_inf}
		The extension $\bP$ of a band projection in $E$ to $E^s$ satisfies the following properties.
		\begin{enumerate}[(i)]
			\item $\bP(x \vee y) = \bP(x) \vee \bP(y)$ for any $x, y \in E^s$.
			\item $\bP(x \wedge y) = \bP(x) \wedge \bP(y)$ for any $x, y \in E^s$.
		\end{enumerate}
	\end{lemma}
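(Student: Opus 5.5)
We prove both identities by reducing to elements of $E$, where a band projection is a lattice homomorphism, and then pass to $E^s$ using \Cref{l:sup_inf_representation} together with the definition of the extension $\bP(x) = \sup\{\bP(u) \colon u \in E,\ u \le x\}$.

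Fix $x, y \in E^s$ and write
\[
x = \sup\{u \in E \colon u \le x\}, \qquad y = \sup\{v \in E \colon v \le y\},
\]
which is valid by property (iii) of the sup-completion. Index the two sets as nets $(u_\alpha)$ and $(v_\beta)$. Both index sets are upward directed, since $E$ is closed under $\vee$. By \Cref{l:sup_inf_representation},
\[
x \vee y = \sup_{\alpha,\beta}(u_\alpha \vee v_\beta) \quad\text{and}\quad x \wedge y = \sup_{\alpha,\beta}(u_\alpha \wedge v_\beta),
\]
and each $u_\alpha \vee v_\beta$ and $u_\alpha \wedge v_\beta$ lies in $E$.

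Next we use that the extended $\bP$ is left-order continuous and increasing (the Remark preceding \Cref{l:sup_inf_representation}). This means $\bP$ commutes with suprema of upward directed nets. The doubly indexed nets above are upward directed in $(\alpha,\beta)$, because $\vee$ and $\wedge$ are increasing in each variable. Hence
\[
\bP(x\vee y) = \sup_{\alpha,\beta}\bP(u_\alpha\vee v_\beta) = \sup_{\alpha,\beta}\bigl(\bP(u_\alpha)\vee\bP(v_\beta)\bigr),
\]
where the last equality uses that band projections on $E$ are lattice homomorphisms. The same computation gives the analogous identity for $\wedge$.

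It remains to identify the right-hand sides. Apply \Cref{l:sup_inf_representation} once more, now to the nets $(\bP(u_\alpha))$ and $(\bP(v_\beta))$. Their suprema are $\bP(x)$ and $\bP(y)$, again by left-order continuity (or directly from the defining formula of the extension, since every $w \in E$ with $w \le x$ is one of the $u_\alpha$). This yields
\[
\sup_{\alpha,\beta}\bigl(\bP(u_\alpha)\vee\bP(v_\beta)\bigr) = \bP(x)\vee\bP(y),
\]
and likewise for $\wedge$, which proves (i) and (ii).

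The main obstacle is justifying the interchange of $\bP$ with the supremum. "Left-order continuous" should mean that $\bP(\sup_\gamma z_\gamma) = \sup_\gamma \bP(z_\gamma)$ for increasing nets. If that reading is in doubt, we would instead verify the equality directly from the defining formula for the extension. The inequality "$\ge$" is immediate from monotonicity. For "$\le$", every $w \in E$ with $w \le x\wedge y$ is itself one of the $u_\alpha\wedge v_\beta$ (take $u = v = w$), and $w \le x \vee y$ similarly reduces to such terms. Alternatively, for (i) one can avoid continuity entirely: monotonicity gives $\bP(x\vee y)\ge \bP(x)\vee\bP(y)$, and additivity with $\bP + \bP_{B^d}$ can handle the reverse inequality.
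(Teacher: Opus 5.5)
Your proof is correct and is essentially the paper's argument. You write $x$ and $y$ as suprema of increasing nets from $E$, use \Cref{l:sup_inf_representation} to express $x\vee y$ and $x\wedge y$ as suprema of $u_\alpha\vee v_\beta$ and $u_\alpha\wedge v_\beta$, pass $\bP$ through these suprema by left-order continuity, use that $\bP$ is a lattice homomorphism on $E$, and recombine with \Cref{l:sup_inf_representation}. Your fallback paragraph is unnecessary, since the left-order continuity you need is exactly what the cited Remark provides; moreover, your explicit choice of nets $\{u\in E\colon u\le x\}$ makes $\sup_\alpha\bP(u_\alpha)=\bP(x)$ hold by the definition of the extension.
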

	\begin{proof}
		Let $x, y \in E^s$. There are nets $(x_\alpha)$ and $(y_\beta)$ in $E$ such that $x_\alpha \uparrow x$ and $y_\beta \uparrow y$. By left-order continuity and using \Cref{l:sup_inf_representation}, we get that
		\[ \bP(x \vee y) = \sup_{\alpha, \beta} \bP(x_\alpha \vee y_\beta) = \sup_{\alpha, \beta} (\bP(x_\alpha) \vee \bP(y_\beta)) = \bP(x) \vee \bP(y),\]
		and, similarly, $\bP(x \wedge y) = \bP(x) \wedge \bP(y)$.
	\end{proof}
	
	An \emph{ideal} $F$ in $E^s$ is a lattice subcone with the property that if $x \in E^s$, $y \in F$, and $\abs{x} \leq \abs{y}$, then $x \in F$.
	
	\begin{proposition} \label{p:projection_image_ideal}
		Let $\bP$ be a band projection in $E$ extended to $E^s$. Then $\bP(E^s)$ is an ideal in $E^s$.
	\end{proposition}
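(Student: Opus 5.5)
We must show that $\bP(E^s)$ is a lattice subcone, and that it is solid in the sense of the definition of an ideal.

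\textbf{Subcone and sublattice.} Additivity and positive homogeneity of the extended $\bP$ (the Remark after the definition of $E^s$) give $\bP(x)+\bP(y)=\bP(x+y)$ and $\lambda\bP(x)=\bP(\lambda x)$, together with $\bP(0)=0$. Hence $\bP(E^s)$ is closed under addition and nonnegative scalars. \Cref{l:sup_completion_projection_distribute_sup_inf} gives $\bP(x)\vee\bP(y)=\bP(x\vee y)$ and $\bP(x)\wedge\bP(y)=\bP(x\wedge y)$, so $\bP(E^s)$ is a sublattice.

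\textbf{Idempotence.} Next I show that $\bP(\bP(x))=\bP(x)$, so that $\bP(E^s)=\{z\in E^s:\bP(z)=z\}$. Choose a net $x_\alpha\uparrow x$ in $E$. Then $\bP(x_\alpha)\uparrow\bP(x)$ by left order continuity. Applying left order continuity again,
\[
\bP(\bP(x))=\sup_\alpha\bP(\bP(x_\alpha))=\sup_\alpha\bP(x_\alpha)=\bP(x),
\]
using idempotence of $\bP$ on $E$.

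\textbf{Solidity.} Let $y\in\bP(E^s)$ and $x\in E^s$ with $\abs{x}\le\abs{y}$. I need to interpret $\abs{\cdot}$ in $E^s$. I take $\abs{z}=z\vee(-z)$ when $z$ is invertible, and in general $z^+\vee z^-$ in whatever sense the paper fixes. The robust route is to reduce to the defining property of $\bP$:
\begin{enumerate}[(i)]
\item Show that $\bP(\abs{y})=\abs{y}$.
\item Show that $\mathbb{P}_{B^d}(\abs{x})\le\mathbb{P}_{B^d}(\abs{y})=0$, where $\mathbb{P}_{B^d}=I-\bP$ on $E$ is also extended to $E^s$.
\item Decompose $x=\bP(x)+\mathbb{P}_{B^d}(x)$ in $E^s$.
\item Conclude that $\mathbb{P}_{B^d}(x)=0$, so $x=\bP(x)\in\bP(E^s)$.
\end{enumerate}
For (iii), I would prove that $\bP(z)+\mathbb{P}_{B^d}(z)=z$ for all $z\in E^s$ by taking nets from $E$ and using additivity and left order continuity of addition in $E^s$, since addition commutes with suprema of increasing nets.

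For (iv), I would show that $z\in E^s$ with $\mathbb{P}_{B^d}(\abs{z})=0$ forces $\mathbb{P}_{B^d}(z)=0$. The argument is that $\mathbb{P}_{B^d}(z^+)\le\mathbb{P}_{B^d}(\abs{z})=0$ by monotonicity, and the negative part lies in $E$ because $z$ is bounded below by an element of $E$.

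\textbf{Main obstacle.} The hardest step is the handling of $\abs{\cdot}$ and negative parts in $E^s$. Elements of $E^s$ need not be invertible, and an element with $\abs{x}\le\abs{y}$ might fail to be bounded below. Here I would use that $E^s$ elements are suprema of elements of $E$. I would also use that any $x\in E^s$ splits as $x=x^+ + (x\wedge 0)$ with $x\wedge0\in E$, which follows from property (iv) of $E^s$ and the fact that $x\wedge 0\le 0$ gives $x\wedge0\in E$ by property (v). Then $\bP$ and $I-\bP$ act separately on the $E$-part, where everything is classical, and on the positive part, which is handled by monotonicity as above.
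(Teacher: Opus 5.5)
Your proof is correct, and its core is the same as the paper's. Left order continuity shows that the complementary projection $\bP^d$ annihilates $\bP(E^s)$, monotonicity transfers $\bP^d(\abs{y})=0$ to $\abs{x}$, and one concludes $x=\bP(x)$. The routes differ in the bookkeeping around that core.

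The paper first proves $\bP(\abs{x})=\abs{\bP(x)}$ to reduce to the case $0\le x\le y$. It then finishes directly from the definition of the extension: each $z\in E^+$ with $z\le x$ satisfies $\bP^d(z)=0$, hence $z=\bP(z)$, so $x=\sup z=\sup \bP(z)=\bP(x)$. Nothing about addition in $E^s$ is needed.

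You instead treat an arbitrary $x$ at once. You split $x=x^++(x\wedge 0)$ with $x\wedge 0\in E$ (this does follow from properties (iv) and (v), as you say) and use $x=\bP(x)+\bP^d(x)$ in $E^s$. The latter identity costs you left order continuity of addition in $E^s$, a standard fact but an extra ingredient. In return, your version makes explicit a step the paper compresses. The passage from $\abs{x}\in\bP(E^s)$ back to $x\in\bP(E^s)$ does not follow from $\bP(\abs{x})=\abs{\bP(x)}$ alone; it needs exactly your splitting of $x$ into $x^+$ and the $E$-valued part $x\wedge 0$.

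When writing it up, spell out steps (i) and (ii). From idempotence and \Cref{l:sup_completion_projection_distribute_sup_inf} one gets $\bP(y^\pm)=y^\pm$, hence $\bP(\abs{y})=\abs{y}$. The same net argument you used for idempotence gives $\bP^d\circ\bP=0$ on $E^s$. Finally, your worry that $x$ might not be bounded below is unnecessary, since every $x\in E^s$ dominates $x\wedge 0\in E$.
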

	\begin{proof}
		It is clear that $\bP(E^s)$ is a lattice subcone of $E^s$ since $\bP$ is additive and positively homogeneous. Let $x \in E^s$. Then
		\[ \bP(x^+) = \bP(x \vee 0) = \bP(x) \vee 0 = \bP(x)^+, \]
		and, similarly, $\bP(x^-) = \bP(x)^-$. Thus, 
		\[ \bP(\abs{x}) = \bP(x^+ + x^-) = \bP(x^+) + \bP(x^-) = \bP(x)^+ + \bP(x)^- = \abs{\bP(x)}. \]
		Therefore, $x \in \bP(E^s)$ if and only if $\abs{x} \in \bP(E^s)$.
		
		Let $0 \leq x \leq y$ with $x \in E^s$ and $y \in \bP(E^s)$. We will show that $x \in \bP(E^s)$. Since $y \in \bP(E^s)$, then $y = \bP(u)$ for some $u \in E^s$. By left order continuity of $\bP$ and $\bP^d$,
		\[ \bP^d(y) = \bP^d(\bP(u)) = \sup \{ \bP^d (\bP(z)) \colon z \in E, z \leq u \} = 0. \]
		Since $\bP^d$ is increasing, we have that $0 \leq \bP^d(z) \leq \bP^d(y) = 0$ for all $z \in E, z \leq y$. Now,
		\begin{align*}
			x
			& = \sup \{ z \colon z \in E, z \leq x \} \\
			& = \sup \{ \bP(z) + \bP^d(z) \colon z \in E, z \leq x \} \\
			& = \sup \{ \bP(z) \colon z \in E, z \leq x \} \\
			& = \bP(x),
		\end{align*}
		where the final equality is by definition of the extension. Thus, $x \in \bP(E^s)$. 
	\end{proof}
	
	In \cite{AzouziSI1}, the authors introduce a multiplication structure on $E^s_+$. For $x, y \in E^s_+$, they define
	\[ xy := \sup \{vw \colon v, w \in E^+, \\ v \leq x, \\ w \leq y \}.\]
	
	This multiplication satisfies the following properties. Statements (i), (ii), and (iii) are \cite[Lemma 24]{AzouziSI1}, and statement (iv) is an immediate consequence of statement (ii).
	
	\begin{proposition} \label{p:properties_multiplication_sup_comp}
		Let $x, y, z \in E^s_+$. Then
		\begin{enumerate}[(i)]
			\item $x(y + z) = xy + xz$,
			\item $x(y \wedge z) = xy \wedge xz$,
			\item $x(y \vee z) = xy \vee xz$, and
			\item if $x \leq y$, then $xz \leq yz$.
		\end{enumerate}
	\end{proposition}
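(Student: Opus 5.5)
Statements (i)--(iii) are quoted from \cite[Lemma 24]{AzouziSI1}, so the only item that needs an argument here is (iv). The plan is to deduce monotonicity in one factor from the fact that multiplication distributes over infima, namely (ii).

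Let $x,y,z \in E^s_+$ with $x \leq y$. Then $x = x \wedge y$. Since $E$ is Archimedean, the $f$-algebra is commutative by \cite[Theorem~10.1]{depagter}. The multiplication on $E^s_+$ is defined symmetrically as
\[ xy = \sup\{vw \colon v,w\in E^+,\ v\le x,\ w\le y\}, \]
so it is commutative as well. We may therefore apply (ii) with the factor $z$ in front and obtain
\[ xz = zx = z(x\wedge y) = zx \wedge zy = xz \wedge yz. \]
It follows that $xz \leq yz$, which is (iv).

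The only point that needs care is the commutativity used to move $z$ to the left-hand position required by (ii). This follows directly from the symmetric definition of the product together with the commutativity of $E$. If one prefers to avoid that step, monotonicity can be read off the definition itself. When $x \leq y$, every admissible pair $(v,w)$ with $v\le x$ is also admissible for $y$, so the set whose supremum defines $xz$ is contained in the set defining $yz$. Hence $xz \leq yz$. Either route is routine, so I do not expect a genuine obstacle.
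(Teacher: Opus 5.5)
Your proposal is correct and matches the paper, which likewise takes (i)--(iii) from \cite[Lemma 24]{AzouziSI1} and obtains (iv) as an immediate consequence of (ii) via $x = x \wedge y$. Your remarks on commutativity of the product on $E^s_+$, and the alternative argument by inclusion of the defining sets, only make explicit a step the paper leaves implicit.
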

	
	\section{Stars}
	
	Our abstract definitions are selected by considering the structure of the extended real numbers $\overline{\bR}$, which should be the sup-inf-completion of $\bR$. We have a scalar multiplication and a lattice structure, but we want to avoid expressions of the form $\infty + (-\infty)$.
	
	\begin{definition} \label{D: lattice star}
		
		A \emph{star} $S$ is a set with a scalar multiplication $\bR \times S \to S$, $(\lambda, x) \mapsto \lambda x$ satisfying
		\begin{itemize}
			\item[(i)] $\lambda(\mu x)=(\lambda\mu) x$ for $\lambda,\mu\in\mathbb{R}, x\in S$,
			\item[(ii)] $1x=x$ for $x\in S$, and
			\item[(iii)] there exists $\bar{0}\in S$ such that $0x=\bar{0}$ for all $x\in S$.
		\end{itemize}
		
		The unique element $\bar{0}\in S$ is called the \emph{zero element} of $S$, and will be denoted $0$ from now on. For any $x \in S$, we write $-x$ to mean $(-1)x$.
	\end{definition}
	
	\begin{definition}
		A star $S$ equipped with a partial ordering is called an \emph{ordered star} if for $x, y \in S$ and $\lambda \in \bR^+$,
		\begin{enumerate}[(i)]
			\item $x \leq y$ implies $\lambda x \leq \lambda y$, and
			\item $x \leq y$ implies $-x \geq -y$.
		\end{enumerate}
		
		An ordered star $S$ is called a \emph{lattice star} if $x \vee y := \sup \{x, y\}$ exists for all $x, y \in S$---or, equivalently, if $x \wedge y := \inf \{x, y\}$ exists for $x, y \in S$.
	\end{definition}
	
	\begin{proposition}
		Let $S$ be a lattice star. Then for $x, y \in S$ and $\lambda \in \bR^+$,
		\[ \lambda (x \vee y) = (\lambda x) \vee (\lambda y) \quad \text{and} \quad \lambda(x \wedge y) = (\lambda x) \wedge (\lambda y), \]
		and
		\[ -\lambda(x \vee y) = (-\lambda x) \wedge (-\lambda y) \quad \text{and} \quad -\lambda(x \wedge y) = (-\lambda x) \vee (-\lambda y). \]
	\end{proposition}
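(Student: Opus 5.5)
The plan is to show that each of the maps $x \mapsto \lambda x$ (for $\lambda > 0$) and $x \mapsto -x$ is an order isomorphism, respectively an order anti-isomorphism, of $S$ onto itself. Such maps carry binary suprema to suprema, respectively to infima. The case $\lambda = 0$ and the negative multiples then follow from the star axioms.

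\emph{Case $\lambda=0$.} By \Cref{D: lattice star}(iii), both sides of each identity equal $0$, because $0 \vee 0 = 0 \wedge 0 = 0$.

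\emph{Case $\lambda>0$.} Let $\phi(x) := \lambda x$ and $\psi(x) := \lambda^{-1}x$. By the star axioms (i) and (ii), $\psi(\phi(x)) = (\lambda^{-1}\lambda)x = 1x = x$, and likewise $\phi \circ \psi = \mathrm{id}$. Hence $\phi$ is a bijection with inverse $\psi$. Both maps are increasing by the ordered-star axiom (i), since $\lambda, \lambda^{-1} \in \bR^+$, so $\phi$ is an order automorphism of $S$.

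To see that $\phi$ preserves binary suprema, note first that $\lambda x, \lambda y \le \lambda(x \vee y)$. Now let $u$ be any upper bound of $\{\lambda x, \lambda y\}$. Applying $\psi$ gives $x, y \le \lambda^{-1}u$, so $x \vee y \le \lambda^{-1}u$. Applying $\phi$ then gives $\lambda(x\vee y) \le u$. Therefore $\lambda(x \vee y) = (\lambda x)\vee(\lambda y)$. The infimum identity follows by the same argument with all inequalities reversed.

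\emph{The map $x \mapsto -x$.} By axioms (i) and (ii), $-(-x) = ((-1)(-1))x = x$, so this map is an involution. By the ordered-star axiom (ii) it reverses order. The argument above, with the inequalities flipped, then shows $-(x \vee y) = (-x)\wedge(-y)$ and $-(x\wedge y) = (-x)\vee(-y)$. Explicitly, $-(x\vee y)$ is a lower bound of $\{-x,-y\}$. If $\ell \le -x$ and $\ell \le -y$, then $x, y \le -\ell$, so $x \vee y \le -\ell$, and hence $\ell \le -(x \vee y)$.

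\emph{Combining.} For $\lambda \ge 0$, axiom (i) gives $(-\lambda)z = (-1)(\lambda z)$. Therefore
\[ -\lambda(x\vee y) = -\bigl((\lambda x)\vee(\lambda y)\bigr) = (-\lambda x)\wedge(-\lambda y), \]
and symmetrically $-\lambda(x \wedge y) = (-\lambda x)\vee(-\lambda y)$.

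There is no real obstacle here. The only point needing care is to invoke the inverse maps $\lambda^{-1}$ and $-1$, via the associativity axiom for scalars, to obtain the reverse inequalities. Monotonicity alone yields only one direction.
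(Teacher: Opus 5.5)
Your proof is correct and follows the same route as the paper. Monotonicity of $x\mapsto\lambda x$ gives $\lambda x,\lambda y\le\lambda(x\vee y)$, and pulling an arbitrary upper bound back with $\lambda^{-1}$ through the scalar associativity axiom gives the reverse inequality. You also write out the case $\lambda=0$ and the negative multiples via the order-reversing involution $x\mapsto -x$, which the paper dismisses as ``similar.''
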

	\begin{proof}
		Assume $\lambda \neq 0$. Since $x \vee y \geq x, y$, we have that $\lambda(x \vee y) \geq \lambda x, \lambda y$. Suppose $u \geq \lambda x, \lambda y$ for some $u \in S$. Then $\lambda^{-1} u \geq x, y$ so that $\lambda^{-1} u \geq x \vee y$, and hence $u \geq \lambda(x \vee y)$. Therefore, $\lambda (x \vee y) = (\lambda x) \vee (\lambda y)$. The proofs of the other equalities are similar.
	\end{proof}
	
	\begin{definition}
		A lattice star $S$ is called a \emph{distributive lattice star} if for all $x, y, z \in S$,
		\[ x \wedge (y \vee z) = (x \wedge y) \vee (x \wedge z). \]
		$S$ is a distributive lattice star if and only if for all $x, y, z \in S$,
		\[ x \vee (y \wedge z) = (x \vee y) \wedge (x \vee z). \]
	\end{definition}
	
	Unlike in vector lattices, the distributivity property does not hold in general.
	
	\begin{example} 
		Consider $\mathbb{R}^2$, and take the diagonal and anti-diagonal, i.e.
		\[ S = \{(\lambda, \lambda) \colon \lambda \in \bR\} \cup \{(\lambda, -\lambda) \colon \lambda \in \bR\}. \]
		The set $S$ with the usual coordinatewise ordering and scalar multiplication is a lattice star. To find the supremum of two points $(x,y), (z,w) \in S$, consider the intersection of the standard cones emanating from the points, that is, 
		\[
		\{(a,b) \in \mathbb{R}^2\colon a \ge x,\ b \ge y\} \cap \{ (a,b) \in \mathbb{R}^2 \colon a \ge z,\ b \ge w \}.
		\]
		The left-and-lowermost point of $S$ in this intersection is the supremum of $(x,y)$ and $(z,w)$. For example, consider $A := (-1,1)$ and $B := (-\frac{1}{2},-\frac{1}{2})$. The supremum of $A$ and $B$ is $C := (1,1)$, as depicted in the figure below.
		
		\[
		\resizebox{0.4\textwidth}{!}{
			\begin{tikzpicture}{H}
				
				\clip (-3.5,-3.5) rectangle (3.5,3.5);
				
				\fill[green!20, opacity=0.6] (-1,1) rectangle (4,4);
				
				\fill[red!20, opacity=0.5] (-0.5,-0.5) rectangle (4,4);
				
				\fill[pattern=north west lines, pattern color=gray!40] (-0.5,1) rectangle (4,4);
				
				\draw[very thin, gray!30] (-4,-4) grid (4,4);
				\draw[->, thick] (-3.5,0) -- (3.5,0); 
				\draw[->, thick] (0,-3.5) -- (0,3.5);
				
				\draw[blue, thick] (-3.5,-3.5) -- (3.5,3.5); 
				\draw[blue, thick] (-3.5,3.5) -- (3.5,-3.5); 
				
				\filldraw[black] (-1,1) circle (1.5pt);
				\node[anchor=south east, inner sep=1pt, opacity=0.8, text opacity=1] at (-1.1,0.7) {$A$};
				
				\filldraw[black] (-0.5,-0.5) circle (1.5pt);
				\node[anchor=north east, inner sep=1pt, opacity=0.8, text opacity=1] at (-0.7,-0.3) {$B$};
				\filldraw[black] (1,1) circle (1.5pt);
				\node[anchor=south east, inner sep=1pt, opacity=0.8, text opacity=1] at (1.2,0.5) {$C$};
				
				\draw[dashed, green!60!black] (-1,1) -- (-1,4);
				\draw[dashed, green!60!black] (-1,1) -- (4,1);
				
				\draw[dashed, red!60!black] (-0.5,-0.5) -- (-0.5,4);
				\draw[dashed, red!60!black] (-0.5,-0.5) -- (4,-0.5);
				
			\end{tikzpicture}
		}
		\]
		Note that 
		\[
		((-1,1) \vee (0,0)) \wedge (1,-1) = (1, 1) \wedge (1, -1) = (1,-1),
		\]
		and 
		\[
		((-1,1) \wedge (1,-1)) \vee ((0,0) \wedge (1,-1)) = (-1, -1) \vee (-1, -1) = (-1,-1).
		\]
		Therefore, $S$ is not a distributive lattice star.
	\end{example}
	
	We can define the positive part, negative part, and absolute value of any element $x \in S$ just as in a vector lattice.
	
	\begin{definition}
		Let $S$ be a lattice star. For $x \in S$, we define the \emph{positive and negative parts} of $x$ as
		\[ x^+ := x \vee 0 \quad \text{and} \quad x^- := -(x \wedge 0), \]
		and we define the \emph{absolute value} of $x$ as $\abs{x} := x \vee (-x)$.
	\end{definition}
	
	\begin{proposition}
		Let $S$ be a lattice star. Let $x, y \in S$ and $\lambda \in \bR$. Then
		\begin{enumerate}[(i)]
			\item $\abs{\lambda x} = \abs{\lambda} \abs{x}$,
			\item $\abs{x} = 0$ if and only if $x  = 0$,
			\item if $x \geq 0$, then $\abs{x} = x$, and
			\item if $x \leq 0$, then $\abs{x} = -x$.
		\end{enumerate}
		If $S$ is a distributive lattice star, we also have that
		\begin{enumerate}[(i)] \addtocounter{enumi}{4}
			\item $(x \vee y)^+ = x^+ \vee y^+$ and $(x \vee y)^- = x^- \wedge y^-$, and
			\item $(x \wedge y)^+ = x^+ \wedge y^+$ and $(x \wedge y)^- = x^- \vee y^-$.
		\end{enumerate}
	\end{proposition}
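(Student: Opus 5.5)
Items (i)--(iv) use only the axioms of an ordered star together with the preceding proposition on scalar multiples of suprema and infima; items (v)--(vi) will additionally use distributivity.

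For (i), I split on the sign of $\lambda$. If $\lambda \geq 0$, the preceding proposition gives
\[ \abs{\lambda x} = (\lambda x) \vee (\lambda(-x)) = \lambda(x \vee (-x)) = \lambda\abs{x}, \]
using the star axiom $\lambda(\mu x) = (\lambda\mu)x$ to rewrite $-\lambda x = \lambda(-x)$. If $\lambda < 0$, write $\lambda = -\mu$ with $\mu > 0$. Then $\abs{\lambda x} = (-\mu x) \vee (\mu x)$, which equals $\mu\abs{x} = \abs{\lambda}\abs{x}$ by commutativity of $\vee$ and the case already proved.

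For (iii), suppose $x \geq 0$. Applying axiom (ii) of an ordered star gives $-x \leq -0 = 0 \leq x$, where $-0 = (-1)(0x) = 0x = 0$. Hence $x \vee (-x) = x$. Item (iv) then follows from (iii) and (i): if $x \leq 0$, then $-x \geq 0$, so $\abs{x} = \abs{-x} = -x$.

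For (ii), the direction $x = 0 \Rightarrow \abs{x} = 0$ is immediate from (iii). Conversely, suppose $\abs{x} = 0$. Then $x \leq 0$ and $-x \leq 0$, and negating the second inequality gives $x \geq 0$. Antisymmetry yields $x = 0$.

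For (v), assume $S$ is distributive. The identity $(x \vee y)^+ = (x \vee y) \vee 0 = (x \vee 0) \vee (y \vee 0)$ needs only associativity, commutativity and idempotence of $\vee$. For the negative part, the preceding proposition gives
\[ (x \vee y) \wedge 0 = (x \wedge 0) \vee (y \wedge 0), \]
which is exactly the distributive law. Negating and using $-(a \vee b) = (-a) \wedge (-b)$ from the preceding proposition yields $(x \vee y)^- = x^- \wedge y^-$. Item (vi) is dual: $(x \wedge y)^- = x^- \vee y^-$ needs no distributivity, while $(x \wedge y)^+ = (x \wedge y) \vee 0 = (x \vee 0) \wedge (y \vee 0)$ uses the dual distributive law stated with the definition of a distributive lattice star.

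There is no real obstacle here; the only step needing care is checking that $-0 = 0$ and that negation reverses order, and both follow directly from the star axioms.
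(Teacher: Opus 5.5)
Your proof is correct and follows the paper's argument. The paper dismisses (i)--(iv) as immediate and proves (v) by the same lattice computation, except that it negates first and then applies the dual distributive law, whereas you apply distributivity to $(x \vee y) \wedge 0$ before negating. One wording slip: the identity $(x \vee y) \wedge 0 = (x \wedge 0) \vee (y \wedge 0)$ comes from the distributive law, not from the preceding proposition, as your next clause in fact says.
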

	\begin{proof}
		(i) through (iv) follow immediately from the definition of absolute value. We have that
		\[ (x \vee y)^+ = (x \vee y) \vee 0 = (x \vee 0) \vee (y \vee 0) = x^+ \vee y^+, \]
		and
		\[ (x \vee y)^- = (-(x \vee y)) \vee 0 = ((-x) \wedge (-y)) \vee 0 = ((-x) \vee 0) \wedge ((-y) \vee 0) = x^- \wedge y^-. \]
		This proves (v) and the proof of (vi) is similar.
	\end{proof}
	
	In a vector lattice, we always have that $\abs{x} \geq 0$. The following example illustrates that this need not be true in a lattice star, which motivates the need for the notion of a valuated lattice star.
	
	\begin{example}\label{E: not valuated}
		Consider $S = \overline{\mathbb{R}}$ where the nonzero elements are ordered as usual and $0$ only satisfies $-\infty < 0 < \infty$. Then $S$ is a lattice star that is not valuated since $|-1| = 1 \not\ge 0$.
	\end{example}
	
	\begin{definition}
		Let $S$ be a lattice star. We say $S$ is \emph{valuated} if for all $x \in S$, we have $\abs{x} \geq 0$.
	\end{definition}
	
	The following proposition gives several equivalent characterizations for a distributive lattice star to be valuated.
	
	\begin{proposition} \label{p:star_abs_properties}
		Let $S$ be a distributive lattice star with $x \in S$. The following are equivalent.
		\begin{enumerate}[(i)]
			\item $\abs{x} \geq 0$,
			\item $x^+ \wedge x^- = 0$, and
			\item $\abs{x} = x^+ \vee x^-$.
		\end{enumerate}
	\end{proposition}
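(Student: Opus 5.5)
The plan is to express both $x^+ \vee x^-$ and $x^+ \wedge x^-$ in terms of $\abs{x}$ using only the lattice-star identities already proved, and then read off the equivalences.

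\textbf{Setup.} First I record the identity
\[
x^- = -(x \wedge 0) = (-x) \vee (-0) = (-x) \vee 0 .
\]
This uses the earlier proposition on lattice stars with $\lambda = 1$, namely $-(u \wedge v) = (-u) \vee (-v)$. It also uses $-0 = (-1)(0\cdot x) = 0x = 0$. In the same way, $x \wedge (-x) = -(x \vee (-x)) = -\abs{x}$.

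\textbf{(i)$\Leftrightarrow$(iii).} By associativity and commutativity of $\vee$,
\[
x^+ \vee x^- = (x \vee 0) \vee ((-x) \vee 0) = (x \vee (-x)) \vee 0 = \abs{x} \vee 0 .
\]
Hence $\abs{x} = x^+ \vee x^-$ holds if and only if $\abs{x} = \abs{x} \vee 0$. This in turn holds if and only if $\abs{x} \geq 0$. Distributivity is not needed for this step.

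\textbf{(i)$\Leftrightarrow$(ii).} Here I use distributivity in its dual form $a \vee (b \wedge c) = (a \vee b) \wedge (a \vee c)$, with $a = 0$, $b = x$, $c = -x$. This gives
\[
x^+ \wedge x^- = (x \vee 0) \wedge ((-x) \vee 0) = (x \wedge (-x)) \vee 0 = (-\abs{x}) \vee 0 .
\]
Thus $x^+ \wedge x^- = 0$ if and only if $-\abs{x} \leq 0$. Multiplying by $-1$ reverses the order (ordered star axiom (ii)), and $-(-\abs{x}) = \abs{x}$, $-0 = 0$. So $-\abs{x} \leq 0$ holds if and only if $\abs{x} \geq 0$, in both directions.

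\textbf{Main obstacle.} The only delicate point is applying distributivity correctly in a lattice star, where no additive structure is available. The argument above is purely lattice-theoretic plus the negation rules from the earlier proposition, so I expect no real difficulty beyond carefully justifying $-0 = 0$ and the order reversal under $-1$.
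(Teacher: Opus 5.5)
Your proof is correct and follows essentially the same route as the paper: both show $x^+\vee x^- = \abs{x}\vee 0$ and $x^+\wedge x^- = (-\abs{x})\vee 0$, and then read off the equivalences. The only difference is that you get the second identity in one step from the dual distributive law. The paper instead expands twice with $x\wedge(y\vee z)=(x\wedge y)\vee(x\wedge z)$ and leaves it implicit that the extra terms $((-x)\wedge 0)\vee(x^+\wedge 0)$ are absorbed into $0$.
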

	\begin{proof}
		We have that
		\begin{align*}
			0 \leq x^+ \wedge x^-
			& = (x \vee 0) \wedge ((-x) \vee 0) \\
			& = ((x \vee 0) \wedge (-x)) \vee ((x \vee 0) \wedge 0) \\
			& = ((x \wedge (-x)) \vee (0 \wedge (-x)) \vee ((x \vee 0) \wedge 0) \\
			& = (-\abs{x}) \vee ((-x) \wedge 0) \vee (x^+ \wedge 0).
		\end{align*}
		It follows that $x^+ \wedge x^- = 0$ if and only if $-\abs{x} \leq 0$, demonstrating the equivalence of (i) and (ii).
		Also,
		\[ x^+ \vee x^- = (x \vee 0) \vee ((-x) \vee 0) = (x \vee (-x)) \vee 0 = \abs{x} \vee 0, \]
		from which one can see that (i) and (iii) are equivalent.
	\end{proof}
	
	We also extend the notion of Dedekind completeness and the infinite distributivity property to lattice stars.
	
	\begin{definition}
		A lattice star $S$ is called \emph{Dedekind complete} if for all nonempty $A\subseteq S$ that is bounded above, we have that $\sup A$ exists in $S$. Equivalently, a lattice star is Dedekind complete if $\inf A$ exists in $S$ for every nonempty $A\subseteq S$ that is bounded below. In this case, we clearly have $\inf A=-\sup(-A)$.
	\end{definition}
	
	\begin{definition}
		A lattice star $S$ is said to have the \emph{infinite distributivity property} if whenever $A \subseteq S$ is nonempty, has a supremum in $S$, and $x \in S$, then $x \wedge A$ has a supremum in $S$ and
		\[ \sup(x \wedge A) = x \wedge \sup A. \]
		Equivalently, whenever $A$ is nonempty, has an infimum in $S$, and $x \in S$, then $x \vee A$ has an infimum in $S$ and
		\[ \inf(x \vee A) = x \vee \inf A. \]
	\end{definition}
	
	Finally, we introduce the notions of lattice substars, bands, and lattice star isomorphisms.
	
	\begin{definition}
		A subset $T$ of a lattice star $S$ is a \emph{lattice substar} of $S$ if for all $x, y \in T$ and $\lambda \in \bR$,
		we have that $\lambda x, x \vee y, x \wedge y \in T$. A lattice substar $T$ is an \emph{ideal} if whenever $x \in S$ and $y \in T$ with $\abs{x} \leq \abs{y}$, we have that $x \in T$. An ideal $T$ is a \emph{band} if whenever $A \subseteq T$ has a supremum in $S$ then $\sup A \in T$.
	\end{definition}
	
	\begin{definition}
		Let $S, T$ be lattice stars. A map $\phi \colon S \to T$ is a \emph{lattice star homomorphism} if for $x, y \in S$ and $ \lambda \in \bR$,
		\begin{enumerate}[(i)]
			\item $\phi(\lambda x) = \lambda \phi(x)$, and
			\item $\phi(x \vee y) = \phi(x) \vee \phi(y)$.
		\end{enumerate}
		If $\phi$ is bijective, then $\phi$ is called a \emph{lattice star isomorphism} and we say $S$ and $T$ are \emph{lattice star isomorphic}. Note that $\phi^{-1}$ is also a lattice star isomorphism in this case.
	\end{definition}
	
	In the next section, we will construct the sup-inf-completion of a Dedekind complete vector lattice using the sup-completion as a starting point. From the abstract point of view, the sup-completion is not a lattice substar of the sup-inf-completion because it only has positive scalar multiplication. This motivates considering lattice semistars, which are essentially stars but with only a positive scalar multiplication.
	
	\begin{definition}
		A \emph{semistar} $S$ is a set together with a positive scalar multiplication $\mathbb{R}^+\times S \to S$, $(\lambda, x) \mapsto \lambda x$ such that
		\begin{itemize}
			\item[(i)] $\lambda(\mu x)=(\lambda\mu) x\quad (\lambda,\mu\in\mathbb{R}^+, x\in S)$
			\item[(ii)] $1x=x\quad (x\in S)$
			\item[(iii)] there exists $\bar{0}\in S$ such that, for all $x\in S$, we have $0x=\bar{0}$.
		\end{itemize}
	\end{definition}
	
	From now on, the element $\overline{0}$ will simply be denoted by $0$. The notions of an ordered semistar, a lattice semistar, a valuated semistar, a distributive semistar, a Dedekind complete semistar, the infinite distributivity property for semistars, lattice semistar homomorphisms, and lattice semistar isomorphisms can be defined in the obvious ways. We leave this to the reader.
	
	\section{Construction of the sup-inf-completion}
	
	Given a Dedekind complete vector lattice $E$ and its sup-completion $E^s$, we define
	\[ E^s_+ \Delta E^s_+ := \{(x, y) \in E^s_+  \times E^s_+ \colon x \wedge y = 0 \}. \]
	We define a scalar multiplication on $E^s_+ \Delta E^s_+$ as follows. For $(x, y) \in E^s_+ \Delta E^s_+$ and $\lambda \in \bR^+$, define
	\[ \lambda(x, y) := (\lambda x, \lambda y) \text{ and } -\lambda(x, y) := (\lambda y, \lambda x). \]
	We define a partial ordering via $(x, y) \leq (z, w)$ if $x \leq z$ and $w \leq y$. It is readily verified that $E^s_+ \Delta E^s_+$ is a lattice star with
	\[ (x, y) \vee (z, w) = (x \vee z, y \wedge w). \]
	
	If $A \subseteq E^s_+ \Delta E^s_+$ is bounded above, then let $w := \sup\{x \colon (x,y) \in A\}$ and $z := \inf\{y \colon (x,y) \in A\}$. Note that by the infinite distributivity property of $E^s$ we find 
	\[
	0 \le w \wedge z = \sup\{x\wedge z \colon (x,y) \in A\} \le \sup\{ x \wedge y \colon (x,y) \in A\} = 0,
	\]
	so $(w,z) \in E^s_+ \Delta E^s_+$, and it is readily verified that $(w,z) = \sup A$. Hence $E^s_+ \Delta E^s_+$ is Dedekind complete.
	
	To see that $E^s_+ \Delta E^s_+$ is valuated, note that for $(x, y) \in E^s_+ \Delta E^s_+$,
	\[ \abs{(x, y)} = (x, y) \vee (y, x) = (x \vee y, x \wedge y) = (x \vee y, 0) \geq (0, 0). \]
	
	Furthermore, if for each $x \in E^s$, we identify $x$ with $(x^+, x^-)$, it follows that $E$ is a lattice substar of $E^s_+ \Delta E^s_+$. Recall that in the sup-completion we have that if $x \in E$ and $y \in E^s$ are such that $y \le x$, then $y \in E$. In particular, $x \wedge 0 \le 0$ implies $x \wedge 0 \in E$, so that $x^- = - (x \wedge 0) \in E$. Hence we can identify every $x \in E^s$ with $(x^+,x^-)$ , where $x^- \in E$, so that $$(x^+,x^-) = (x^+,x^-)\vee (0,x^-)$$ and the pair $(0,x^-)$ corresponds to an element of $E$. On the other hand, if $(x,y) \in  E^s_+ \Delta E^s_+$ and $(z^+, z^-)$ corresponds to an element $z \in E$, then $(x,y) \vee (z^+, z^-) = (x \vee z^+, y \wedge z^-)$, where $y \wedge z^-$ must belong to $E$, and thus $(x,y) \vee (z^+, z^-)$ corresponds to an element of $E^s$. We conclude that
	\[
	\{(x,y) \vee (z^+,z^-) \colon (x,y) \in E^s_+ \Delta E^s_+, z \in E \} \cong E^s
	\]
	as lattice semistars.
	
	The elements of $E^s_+ \Delta E^s_+$ can also be described by taking infima and suprema of disjoint elements in $E^+$, which will be a crucial part in the construction of the sup-inf-completion of $E$. Indeed, note that for any $(x,y) \in E^s_+ \Delta E^s_+$, we have
	\begin{align*}
		(x, y)
		& = \sup \{ \inf \{ (v, w) \colon w \le y,\ w \in E^+\} \colon v \le y, v \in E^+\}.
	\end{align*}
	
	In \cite{Azouzi}, Azouzi introduced the notation $[u]^\leq$ for $u \in E^s$ to refer to the set $\{x \in E \colon x \leq u \}$. We will benefit from having a similar notation when considering only positive elements. If $S$ is a lattice star containing $E$ as a lattice substar, define for $0 \leq u \in S$,
	\[ [u]^\leq_+ := \{x \in E^+ \colon x \leq u \}. \]
	
	\begin{theorem} \label{t:unique_sup_inf_completion}
		Let $E$ be a Dedekind complete vector lattice.  Then there exists an essentially unique Dedekind complete distributive valuated lattice star $S$ that contains $E$ as a lattice substar such that
		\begin{enumerate}[(i)]
			\item $T := \{s \vee x \colon s \in S, x \in E \}$ is lattice semistar isomorphic to $E^s$, and
			\item $s = \sup \{ \inf \{x - y \colon y \in [s^-]^\leq_+\} \colon x \in [s^+]^\leq_+ \}$ for all $s \in S$.
		\end{enumerate}
		
	\end{theorem}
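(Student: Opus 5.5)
Existence is settled by taking $S := E^s_+ \Delta E^s_+$ with the structure defined above; uniqueness comes from showing that any $S'$ with the listed properties is lattice star isomorphic to $E^s_+ \Delta E^s_+$ by an isomorphism fixing $E$.

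\emph{Existence.} The discussion preceding the theorem already gives most of what is needed: $E^s_+ \Delta E^s_+$ is a lattice star, it is Dedekind complete and valuated, $E$ sits inside it as a lattice substar via $x \mapsto (x^+, x^-)$, and the set $T$ of (i) is lattice semistar isomorphic to $E^s$. Two things remain.

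First, distributivity. Joins and meets are computed coordinatewise, $\vee$ in the first coordinate and $\wedge$ in the second, and vice versa. Distributivity of $E^s_+ \Delta E^s_+$ therefore follows from distributivity of the lattice $E^s$, which is a consequence of its infinite distributivity.

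Second, the representation (ii). Let $s = (a,b)$. Then $s^+ = (a,0)$ and $s^- = (b,0)$, so $[s^+]^\le_+ = \{x \in E^+ : x \le a\}$, and similarly for $s^-$ with $b$. For such $x$ and $y$, the element $x - y \in E$ is identified with $((x-y)^+, (x-y)^-)$. I would then compute in two steps.

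For fixed $x$, the infimum over $y$ of $((x-y)^+, (x-y)^-)$ is
\[
\bigl(\inf_y (x-y)^+,\ \sup_y (x-y)^-\bigr).
\]
Using $x \wedge b = 0$ (since $x \le a$) and infinite distributivity in $E^s$, I expect this to be $(x, b)$. The key identities are $\inf_y (x-y)^+ = x$, which holds because $x \perp y$, and $\sup_y (x-y)^- = \sup_y y = b$.

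Then $\sup_x (x, b) = (a, b)$, because $a = \sup [a]^\le_+$ in $E^s$ (property (iii) of the sup-completion, applied to $a \ge 0$).

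\emph{Uniqueness.} Let $S$ satisfy the hypotheses, and let $\psi: T \to E^s$ be the isomorphism from (i). We may assume $\psi$ fixes $E$, which I would justify by noting that the isomorphism of $T$ with $E^s$ should be compatible with the inclusion of $E$.

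Define $\Phi: S \to E^s_+ \Delta E^s_+$ by $\Phi(s) := (\psi(s^+), \psi(s^-))$. This makes sense because $s^\pm = s^\pm \vee 0 \in T$ and $s^\pm \ge 0$. The pair is disjoint: valuatedness together with Proposition \ref{p:star_abs_properties} gives $s^+ \wedge s^- = 0$ in $S$, and $\psi$ preserves meets.

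Compatibility with scalars follows from $(\lambda s)^+ = \lambda s^+$ for $\lambda \ge 0$ and $(-s)^+ = s^-$. Preservation of $\vee$ uses the distributive identities from the earlier proposition: $(s \vee t)^+ = s^+ \vee t^+$ and $(s \vee t)^- = s^- \wedge t^-$.

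Injectivity comes from (ii), which expresses $s$ entirely through $[s^+]^\le_+$ and $[s^-]^\le_+$. These sets are determined by $\psi(s^\pm)$, since $\psi$ fixes $E$ and is order preserving.

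Surjectivity is the delicate part. Given $(a, b)$, I would take candidate sets $X = [a]^\le_+$ and $Y = [b]^\le_+$ in $E$, and define
\[
s := \sup_{x \in X} \inf_{y \in Y} (x - y)
\]
in $S$. These suprema and infima exist: the inner infimum is bounded below by $-\sup$ considerations, and I expect to use completeness of $S$ for sets bounded above or below by elements of $S$. Then I must verify $s^+ = \psi^{-1}(a)$ and $s^- = \psi^{-1}(b)$. This amounts to redoing the existence computation inside $S$, which needs the infinite distributivity property in $S$.

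\textbf{Main obstacle.} Infinite distributivity in $S$ is not among the hypotheses, and it is also needed to show that the suprema and infima involved exist in $S$ at all. I would try to derive it on the relevant pieces by transporting through $T \cong E^s$, where infinite distributivity holds. Everything above lies in $T$ or in $-T$ after splitting into positive and negative parts, and the computation reduces to the sup-completion via $\psi$. If that fails, the fallback is to transport the full lattice star structure coordinatewise and check Dedekind completeness.
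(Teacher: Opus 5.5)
Your existence part, the map $\Phi(s)=(\psi(s^+),\psi(s^-))$, the homomorphism and injectivity arguments, and your candidate preimage $v:=\sup_{x}\inf_{y}(x-y)$ all match the paper's proof. You are in fact more careful than the paper about distributivity and about $\Phi(s)$ being a disjoint pair.

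\textbf{The gap is surjectivity.} You never verify $v^+=\psi^{-1}(a)$ and $v^-=\psi^{-1}(b)$, and you misdiagnose the obstacle.
\begin{itemize}
\item Existence of the infima and suprema needs no distributivity. With $s_0:=\psi^{-1}(a)$ and $t_0:=\psi^{-1}(b)$ you have $-t_0\le -y\le x-y\le x\le s_0$, so Dedekind completeness of $S$ suffices.
\item Redoing the coordinatewise computation inside $S$ requires $(\inf_y(x-y))^+=\inf_y(x-y)^+$ and $(\sup_x u_x)\wedge 0=\sup_x(u_x\wedge 0)$. These are precisely infinite distributivity, which $S$ is not given.
\item Transporting through $T$ does not rescue this as stated. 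The set $\{x-y\colon y\in[t_0]^\le_+\}$ is in general not bounded below by an element of $E$, so $\psi$ cannot be applied to it directly.
\item The outer supremum $v$ typically lies in neither $T$ nor $-T$ (take $(+\infty,-\infty)$ in $\overline{\mathbb{R}}^2$). So ``everything lies in $T$ or $-T$'' fails exactly where it is needed.
\item The fallback of transporting the structure coordinatewise presupposes the bijection you are trying to prove.
\end{itemize}

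\textbf{The missing idea: negate before transporting, and replace the remaining distributive step by a sandwich.}
\begin{itemize}
\item Inner suprema. For fixed $x$, $-u_x=\sup_y(y-x)$ is a supremum of elements $\ge -x\in E$. Hence it lies in $T$ and agrees with the supremum there. Translation by $-x$ is an order automorphism of $E^s$, so $\psi$ yields $-u_x=\psi^{-1}(b-x)$. Likewise, for fixed $y$, $\sup_x(x-y)=\psi^{-1}(a-y)$.
\item Positive part. In $E^s$ we have $(b-x)\wedge 0=(b\wedge x)-x=-x$. Hence $u_x^+=-\bigl(\psi^{-1}(b-x)\wedge 0\bigr)=x$. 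The identity $(\sup_i p_i)\vee c=\sup_i(p_i\vee c)$ holds in every lattice, so $v^+=\sup_x u_x^+=s_0$.
\item Negative part, upper bound. From $v\ge u_0=-t_0$ you get $v^-\le t_0$.
\item Negative part, lower bound. For each fixed $y$, $v\le\sup_x(x-y)=\psi^{-1}(a-y)$, because $\sup_x\inf_{y'}\le\inf_{y'}\sup_x$. Therefore $v\wedge 0\le\psi^{-1}\bigl((a-y)\wedge 0\bigr)=-y$, that is, $v^-\ge y$ for all $y$, and so $v^-\ge t_0$.
\end{itemize}
Thus $\Phi(v)=(a,b)$ without any infinite distributivity in $S$.

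\textbf{A shared assumption.} All of this, like the paper, uses that $\psi$ is the identity on $E$. This must be read into (i) as part of ``essentially unique''; it cannot be justified by compatibility. An arbitrary lattice semistar isomorphism need not commute with $-1$ on $E$: for $E=\mathbb{R}$, the map $x\mapsto x$ on $[0,+\infty]$ and $x\mapsto 2x$ on $(-\infty,0)$ is a lattice semistar automorphism of $\mathbb{R}\cup\{+\infty\}$.
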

	\begin{proof}
		It was shown above that $E^s_+ \Delta E^s_+$ satisfies these properties. To prove uniqueness, let $S$ be such a lattice star with $\psi \colon T \to E^s$ a lattice semistar isomorphism. Define $\varphi \colon S \to E^s_+ \Delta E^s_+$ by $\varphi(s) := (\psi(s^+), \psi(s^-))$. Note that $\psi$ preserves suprema, that is, $A \subseteq T$ has a supremum in $T$ if and only if $\psi(A)$ has a supremum in $E^s$---in this case, $\psi(\sup A) = \sup \psi(A)$. Furthermore, for $x, y \in E^+$, we also have that $\psi(x - y) = \psi(x) - \psi(y)$. To see this, note that $\psi(x-y)^+ = \psi((x-y)^+) = \psi(x)$ and $\psi(x - y)^- = \psi((x-y)^-) = \psi(y)$. Since $\psi$ preserves $E$ as $E^s$ is an imbedding cone of $E$ by \cite[Section 1]{Donner}, the desired equality follows.
		
		We proceed to show that $\varphi$ is a lattice star isomorphism. Let $s, t \in S$. If $s^+ = t^+$ and $s^- = t^-$, then
		\begin{align*}
			s
			& = \sup \{ \inf \{x - y \colon y \in [s^-]^\leq_+\} \colon x \in [s^+]^\leq_+ \} \\
			& = \sup \{ \inf \{x - y \colon y \in [t^-]^\leq_+\} \colon x \in [t^+]^\leq_+ \} \\
			& = t.
		\end{align*}
		Therefore, $\varphi$ is injective. Since $(s \vee t)^+ = s^+ \vee t^+$ and $(s \vee t)^- = s^- \wedge t^-$, we have that
		\begin{align*}
			\varphi(s \vee t)
			& = (\psi(s^+ \vee t^+), \psi(s^- \wedge t^-)) \\
			& = (\psi(s^+) \vee \psi(t^+), \psi(s^-) \wedge \psi(t^-)) \\
			& = (\psi(s^+), \psi(s^-)) \vee (\psi(t^+), \psi(t^-)) \\
			& = \varphi(s) \vee \varphi(t).
		\end{align*}
		For $\lambda \in \bR^+$,
		\[ \varphi(\lambda s) = (\psi(\lambda s^+), \psi(\lambda s^-)) = (\lambda \psi(s^+), \lambda \psi(s^-)) = \lambda (\psi(s^+), \psi(s^-)) = \lambda \varphi(s). \]
		Additionally,
		\[ \varphi(-s) = (\psi(s^-), \psi(s^+)) = -(\psi(s^+), \psi(s^-)) = -\varphi(s). \]
		Thus, $\varphi$ is a lattice star homomorphism.
		
		Let $(z, w) \in E^s_+ \Delta E^s_+$. There there are $s, t \in S^+$ such that $s \wedge t = 0$ and $\psi(s) = z$ and $\psi(t) = w$. Define the element
		\[ v := \sup \{ \inf \{x - y \colon y \in [t]^\leq_+ \} \colon x \in [s]^\leq_+ \}. \]
		This element is well-defined since $-t \leq x - y \leq s$ for all such $x, y$. For $x \in [s]^\leq_+$, we have that
		\begin{align*}
			\psi \left( \sup \{y - x \colon y \in [t]^\leq_+ \} \right)
			& = \sup \{\psi(y) - \psi(x) \colon y \in [t]^\leq_+ \} \\
			& = w - \psi(x),
		\end{align*}
		and hence $\sup \{y - x \colon y \in [t]^\leq_+ \} = \psi^{-1}(w - \psi(x))$. Thus,
		\[ -v = \inf \{ \sup \{y - x \colon y \in [t]^\leq_+ \} \colon x \in [s]^\leq_+ \} = \inf \{ \psi^{-1}(w - \psi(x)) \colon x \in [s]^\leq_+ \}. \]
		Now \cite[IC2, p. 3]{Donner} yields that $(w - \psi(x)) \wedge 0 = -\psi(x)$, thus by \cite[IC1, p. 3]{Donner}, we have
		\begin{align*}
			(-v) \wedge 0 
			& = \inf \{ \psi^{-1}((w - \psi(x)) \wedge 0) \colon x \in [s]^\leq_+ \} \\
			& = \inf \{-x \colon x \in [s]^\leq_+ \} \\
			& = - \sup \{x \colon x \in s^\leq_+ \} \\
			& = - s,
		\end{align*}
		so that $v^+ = s$. For any $y \in [t]^\leq_+$, we have that
		\[ v \leq \sup \{x - y \colon x \in [s]^\leq_+ \} = \psi^{-1}(z - \psi(y)), \]
		and hence $v \wedge 0 \leq \psi^{-1}(z - \psi(y)) \wedge 0 = -y$ so that $y \leq v^-$. Thus, $t \leq v^-$. Since
		\[ -v = \inf \{ \psi^{-1}(w - \psi(x)) \colon x \in [s]^\leq_+ \} \leq \psi^{-1}(w - \psi(x)) \]
		for all $x \in [s]^\leq_+$, it follows that $v^- = (-v) \vee 0 \leq \psi^{-1}((w - \psi(x)) \vee 0) = t$. Therefore, $\varphi(v) = (z, w)$, showing that $\varphi$ is a lattice star isomorphism.
	\end{proof}
	
	We call this essentially unique lattice star the \emph{sup-inf-completion} of $E$, and we denote it by $E^{si}$. Note that multiplying by $-1$ in a lattice star is an involutive order anti-isomorphism, and hence the order of taking the supremum and the infimum can be swapped, so the construction of the analogous \emph{inf-sup-completion} of $E$ would yield the same object. Thus, we have the following proposition.
	
	\begin{proposition} \label{p:inf_sup_completion}
		For any $s \in E^{si}$,
		\[ s = \inf \{ \sup \{ x - y \colon x \in [s^+]^\leq_+ \} \colon y \in [s^-]^\leq_+ \}. \]
	\end{proposition}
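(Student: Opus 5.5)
We prove the identity by applying property (ii) of \Cref{t:unique_sup_inf_completion} to $-s$ and then pushing the scalar $-1$ through the suprema and infima. Multiplication by $-1$ is an involutive order anti-isomorphism of $E^{si}$: it is a bijection (its own inverse), and $x\le y$ implies $-x\ge -y$ by the ordered star axioms. Consequently, whenever $\sup A$ exists in $E^{si}$, so does $\inf(-A)$, and $-\sup A=\inf(-A)$. The symmetric statement holds with $\sup$ and $\inf$ interchanged. This is the same argument as in the proof of the proposition on $\lambda(x\vee y)$, extended to arbitrary subsets.

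First we identify the positive and negative parts of $-s$. By definition,
\[ (-s)^+=(-s)\vee 0=-(s\wedge 0)=s^-, \qquad (-s)^-=-((-s)\wedge 0)=-(-(s\vee 0))=s^+. \]
Here we used $-(a\wedge b)=(-a)\vee(-b)$ and $-0=0$. The latter holds because $(-1)0=(-1)(0\cdot s)=0\cdot s=0$. Hence $[(-s)^+]^\le_+=[s^-]^\le_+$ and $[(-s)^-]^\le_+=[s^+]^\le_+$.

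Next, apply \Cref{t:unique_sup_inf_completion}(ii) to $-s\in E^{si}$ and rename the bound variables so that $y$ ranges over $[s^-]^\le_+$ and $x$ over $[s^+]^\le_+$:
\[ -s=\sup\{\inf\{y-x\colon x\in[s^+]^\le_+\}\colon y\in[s^-]^\le_+\}. \]
All inner infima and the outer supremum exist, since the theorem asserts the formula holds in $E^{si}$; in any case $E^{si}$ is Dedekind complete and these sets are bounded by $\pm(s^+\vee s^-)$. Multiplying by $-1$ turns the outer supremum into an infimum of the negatives. Each negated inner infimum becomes $\sup\{-(y-x)\colon x\in[s^+]^\le_+\}$. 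For $x,y\in E$ we have $-(y-x)=x-y$, because the scalar multiplication on $E\subseteq E^{si}$ agrees with that of $E$, as $E$ is a lattice substar. This yields
\[ s=-(-s)=\inf\{\sup\{x-y\colon x\in[s^+]^\le_+\}\colon y\in[s^-]^\le_+\}, \]
which is the claim.

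The only delicate step is justifying that $-1$ commutes with arbitrary suprema and infima rather than just binary ones. This follows directly from $-1$ being an order-reversing bijection equal to its own inverse: if $u$ is an upper bound of $A$, then $-u$ is a lower bound of $-A$, and conversely. No distributivity or valuation property is needed.
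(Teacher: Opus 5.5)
Your proof is correct and follows the paper's route. The paper justifies the proposition only by remarking that multiplication by $-1$ is an involutive order anti-isomorphism of $E^{si}$, so that \Cref{t:unique_sup_inf_completion}(ii) applied to $-s$ gives the inf-sup form. You make explicit what that remark leaves implicit: $(-s)^+ = s^-$ and $(-s)^- = s^+$, and the fact that $-1$ interchanges arbitrary suprema and infima.
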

	
	Following the proof of \Cref{t:unique_sup_inf_completion}, we can obtain the following universal property.
	
	\begin{corollary}
		Suppose $S$ is a Dedekind complete distributive valuated lattice star containing a Dedekind complete vector lattice $E$ as a star sublattice such that
		\begin{enumerate}[(i)]
			\item there is an injective semistar homomorphism from $\{ s \vee x \colon s \in S, x \in E \}$ into $E^s$, and
			\item $s = \sup \{ \inf \{x - y \colon y \in [s^-]^\leq_+ \} \colon x \in [s^+]^\leq_+ \}$ for all $s \in S$.
		\end{enumerate}
		Then there is an injective lattice star homomorphism from $S$ into $E^{si}$.
	\end{corollary}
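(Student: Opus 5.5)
We follow the uniqueness part of the proof of \Cref{t:unique_sup_inf_completion}, using only its injectivity and homomorphism steps. Let $T := \{ s \vee x \colon s \in S, x \in E \}$ and let $\psi \colon T \to E^s$ be the given injective semistar homomorphism. We define
\[ \varphi \colon S \to E^s_+ \Delta E^s_+, \qquad \varphi(s) := (\psi(s^+), \psi(s^-)). \]
Note that $s^+ = s \vee 0$ and $s^- = (-s) \vee 0$ both lie in $T$, since $0 \in E$.

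\emph{Well-definedness.} We must check that $\psi(s^+) \wedge \psi(s^-) = 0$ and that both entries are positive. Since $S$ is distributive and valuated, \Cref{p:star_abs_properties} gives $s^+ \wedge s^- = 0$ and $s^\pm \geq 0$. We also need $\psi$ to preserve $0$, order and finite infima on $T$. Preservation of $0$ and of positive scaling is part of being a semistar homomorphism. Preservation of $\vee$ gives monotonicity. For infima we use that $\psi$ fixes $E$ (it is the identity on the common copy of $E$, as in the proof of \Cref{t:unique_sup_inf_completion} via \cite[Section 1]{Donner}), together with the identity $a \wedge b = -((-a)\vee(-b))$, valid whenever the relevant elements lie in $T$ on both sides. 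This is the main obstacle: $T$ is not closed under negation, so we cannot simply negate. Instead we argue directly that $\psi(s^+) \wedge \psi(s^-)$ is a positive element $u$ of $E^s$ lying below $\psi(s^+)$ and $\psi(s^-)$. Using condition (ii), $s^+$ is the supremum of $[s^+]^\leq_+$, and since $\psi$ is injective and a $\vee$-homomorphism fixing $E$, the elements of $E^+$ below $\psi(s^+)$ and $\psi(s^-)$ are exactly the images of elements of $[s^+]^\leq_+$ and $[s^-]^\leq_+$. An $x \in E^+$ below both therefore lies below $s^+ \wedge s^- = 0$, so $x = 0$. Property (iii) of the sup-completion then forces $u = 0$.

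\emph{Homomorphism.} This is exactly as in the proof of \Cref{t:unique_sup_inf_completion}. Distributivity gives $(s \vee t)^+ = s^+ \vee t^+$ and $(s \vee t)^- = s^- \wedge t^-$, so
\[ \varphi(s \vee t) = (\psi(s^+) \vee \psi(t^+), \, \psi(s^- \wedge t^-)). \]
For the second entry we need $\psi(s^-\wedge t^-) = \psi(s^-)\wedge\psi(t^-)$. This follows by the same device as above: positive elements of $E$ below each side coincide, using (ii) to write $s^-$ and $t^-$ as suprema of their $E^+$-minorants, and property (iii) of $E^s$ then identifies both sides. Positive scaling commutes with $\psi$, and $\varphi(-s) = (\psi(s^-), \psi(s^+)) = -\varphi(s)$ holds by definition.

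\emph{Injectivity.} If $\varphi(s) = \varphi(t)$, then injectivity of $\psi$ gives $s^+ = t^+$ and $s^- = t^-$. Condition (ii) expresses $s$ and $t$ through the same sets $[s^\pm]^\leq_+ = [t^\pm]^\leq_+$, so $s = t$. Composing $\varphi$ with the identification of $E^s_+ \Delta E^s_+$ as $E^{si}$ from \Cref{t:unique_sup_inf_completion} gives the required injective lattice star homomorphism. Surjectivity, the hardest part of the theorem, is not needed here.
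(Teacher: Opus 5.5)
Your proposal takes the same route as the paper, whose proof of this corollary is simply the uniqueness argument of \Cref{t:unique_sup_inf_completion}: define $\varphi(s)=(\psi(s^+),\psi(s^-))$ and keep only the homomorphism and injectivity steps. One caveat: $\psi|_E=\mathrm{id}$ is not implied by (i) as literally stated (consider $2\psi$; some injective semistar homomorphisms not fixing $E$ even destroy the disjointness of $\psi(s^+)$ and $\psi(s^-)$), so it must be read into the hypothesis, as the paper tacitly does; granting it, your order-density argument for $\psi(s^+)\wedge\psi(s^-)=0$ and $\psi(s^-\wedge t^-)=\psi(s^-)\wedge\psi(t^-)$ is correct and fills in steps the paper skips, which are genuinely needed because an injective $\vee$-homomorphism need not preserve meets.
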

	
	The universal completion $E^u$ of $E$ can be represented as
	\[ E^u \cong \{f \in C(K, \overline{\bR}) \colon f^{-1}(\bR) \text{ is open and dense}\} \]
	by the Maeda-Ogasawara representation theorem \cite[Theorem 1.40]{AliprantisBurkinshaw}, where $K$ is an extremally disconnected compact Hausdorff space. It was shown in \cite[Theorem 1]{TroitskyPolavarapu} that the sup-completion of a Dedekind complete vector lattice $E$ can be identified with
	\[ E^s \cong \{f \in C(K, \overline{\bR}) \colon \text{ there is } x \in E \text{ such that } x \leq f \}, \]
	where the isomorphism is as lattice cones. Using this fact, we can now also represent the sup-inf-completion as follows.
	
	\begin{theorem}
		The sup-inf-completion $E^{si}$ of a Dedekind complete vector lattice $E$ is isomorphic to $C(K, \overline{\bR})$ as lattice stars.
	\end{theorem}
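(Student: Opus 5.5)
By \Cref{t:unique_sup_inf_completion} we may take $E^{si} = E^s_+ \Delta E^s_+$. Let $\Phi \colon E^s \to \{f \in C(K,\overline{\bR}) \colon x \le f \text{ for some } x \in E\}$ be the lattice cone isomorphism of \cite[Theorem 1]{TroitskyPolavarapu}. The plan is to define
\[ \Theta \colon E^s_+ \Delta E^s_+ \to C(K,\overline{\bR}), \qquad \Theta(x,y) := \Phi(x) - \Phi(y), \]
with the difference taken pointwise. This is well defined provided that, whenever $x \wedge y = 0$ in $E^s_+$, the functions $\Phi(x),\Phi(y) \ge 0$ have disjoint cozero sets, so that at each point $k \in K$ at least one of them vanishes and $\infty - \infty$ never occurs. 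Since $\Phi$ preserves finite infima, $\Phi(x)\wedge\Phi(y) = \Phi(x\wedge y) = 0$ pointwise, which gives exactly this. Continuity of $\Theta(x,y)$ into $\overline{\bR}$ then holds: near a point where $\Phi(x)>0$, the open set $\{\Phi(x)>0\}$ forces $\Phi(y)=0$, so $\Theta(x,y)=\Phi(x)$ there, and symmetrically near points where $\Phi(y)>0$. At a point where both vanish, continuity follows because $|\Theta(x,y)| = \Phi(x)\vee\Phi(y)$ is continuous and tends to $0$.

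For the homomorphism properties, scalar multiplication is immediate: for $\lambda \ge 0$ we have $\Theta(\lambda x, \lambda y) = \lambda\Theta(x,y)$, and for negative scalars the components swap, which matches $-\Theta$. For suprema we must show
\[ \Phi(x\vee z) - \Phi(y\wedge w) = (\Phi(x)-\Phi(y)) \vee (\Phi(z)-\Phi(w)) \]
pointwise. This reduces to a case check at each $k$, using that exactly one of each disjoint pair can be nonzero, together with $\Phi(x \vee z) = \Phi(x)\vee\Phi(z)$ and $\Phi(y\wedge w) = \Phi(y)\wedge\Phi(w)$. Injectivity is also straightforward: $\Theta(x,y)^+ = \Phi(x)$ and $\Theta(x,y)^- = \Phi(y)$ pointwise, and $\Phi$ is injective.

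The main obstacle is surjectivity. Given $f \in C(K,\overline{\bR})$, its parts $f^+, f^- \ge 0$ are continuous with values in $[0,\infty]$ and are bounded below by $0 \in E$. They therefore lie in the range of $\Phi$, so $f^+ = \Phi(x)$ and $f^- = \Phi(y)$ for some $x, y \in E^s_+$. We then need $x \wedge y = 0$, which follows from $\Phi(x\wedge y) = f^+\wedge f^- = 0 = \Phi(0)$ and injectivity of $\Phi$. The point needing care is that the image of $\Phi$ really contains all of $C(K,[0,\infty])$, with no extra finiteness condition. Since $0 \in E$, the description of the range quoted above gives this directly.

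Finally, $\Theta$ is a bijective lattice star homomorphism. Its inverse is automatically a homomorphism, by the remark following the definition of lattice star isomorphism, so $\Theta$ is the required lattice star isomorphism.
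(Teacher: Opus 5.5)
Your proposal is correct and follows the paper's proof essentially step for step. The paper also defines $(x,y) \mapsto \psi(x) - \psi(y)$ on $E^s_+ \Delta E^s_+$, checks scalar multiplication and suprema directly, gets injectivity from positive and negative parts, and proves surjectivity by pulling back $f^+$ and $f^-$; you additionally supply two details the paper leaves implicit, namely continuity of the pointwise difference and the check that the preimages of $f^+$ and $f^-$ are disjoint, so that the pair actually lies in $E^s_+ \Delta E^s_+$.
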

	\begin{proof}
		
		Let $\psi \colon E^s \to \{f \in C(K,\overline{\mathbb{R}}) \colon\ \mbox{there is $x \in E$ such that $x \le f$}\}$ be a lattice cone isomorphism. Define the map $\varphi \colon E^s_+ \Delta E^s_+ \to C(K,\overline{\mathbb{R}})$ by $\varphi(x,y) := \psi(x) - \psi(y)$. Since 
		\[
		\{\omega \in K \colon \psi(x)(\omega) > 0\} \cap \{\omega \in K \colon \psi(y)(\omega) > 0\} = \varnothing,
		\]
		the map is well-defined. It should be clear that $\varphi(\lambda(x,y)) = \lambda \varphi((x,y))$ for all $\lambda \in \mathbb{R}$. For $(x,y), (z,w) \in E^{si}$, it follows that 
		\begin{align*}
			\varphi((x,y) \vee (z,w)) &= \varphi((x \vee z , y \wedge w)) = \psi(x \vee z) - \psi( y \wedge w) \\&= \psi(x) \vee \psi(z) - \psi(y) \wedge \psi(w) \\&= (\psi(x) - \psi(y)) \vee (\psi(z) - \psi(w)) \\&= \varphi((x,y)) \vee \varphi((z,w)),
		\end{align*}
		hence $\varphi$ is a lattice star homomorphism. If $(x,y)$ and $(z,w)$ are such that $\psi(x) - \psi(y) = \psi(z) - \psi(w)$, then by taking the positive and negative parts, we find that $\psi(x) = \psi(z)$ and $\psi(y) = \psi(w)$. Hence $(x,y) = (z,w)$, showing that $\varphi$ is injective. Let $f \in C(K,\overline{\mathbb{R}})$. Then $f = f_+ - f_-$, since 
		\[
		\{\omega \in K \colon f(\omega) > 0\} \cap \{\omega \in K \colon f(\omega) < 0\} = \varnothing.
		\]
		Let $x, y \in E^s_+$ be such that $\psi(x) = f_+$ and $\psi(y) = f_-$. Then $\varphi((x,y)) = f$, and $\varphi$ is surjective.
		Thus, $E^{si}$ and $C(K,\overline{\mathbb{R}})$ are isomorphic as lattice stars. 
	\end{proof}
	
	We can also recover the universal completion of $E$ from the sup-inf completion.
	
	\begin{proposition}
		Let $E$ be a Dedekind complete vector lattice. Then the universal completion of $E$ is given by
		\[ E^u = E^{si}_F := \{x \in E^{si} \colon \inf_{n \in \bN} \tfrac{1}{n} \abs{x} = 0 \}. \]
	\end{proposition}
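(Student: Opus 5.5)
Our plan is to work in the concrete model $E^{si} = E^s_+ \Delta E^s_+$ and show that $E^{si}_F$ coincides with the image of $E^u$ under a natural embedding. We could instead use the representation $E^{si} \cong C(K,\overline{\bR})$ from the preceding theorem together with the Maeda--Ogasawara representation $E^u \cong \{f \in C(K,\overline{\bR}) \colon f^{-1}(\bR) \text{ is open and dense}\}$ on the same Stone space $K$. We will use this function representation, since both objects then live inside $C(K,\overline{\bR})$ and the statement becomes a pointwise/topological one. We must check that the $K$ in the two representations is the same. This holds because $E$ is Dedekind complete, so $K$ is the Stone space of the Boolean algebra of bands of $E$; both representations are built from that space, and $E$ sits in both as the same subset of $C(K,\overline{\bR})$ extending $E \subseteq E^s$.

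First we compute $\inf_n \frac1n \abs{f}$ in $C(K,\overline{\bR})$ for a given $f$. Let $g := \abs{f} \ge 0$ and $U := g^{-1}([0,\infty))$, which is an open set. The pointwise infimum of $\frac1n g$ is $0$ on $U$ and $\infty$ on $K\setminus U$. In $C(K,\overline{\bR})$, with $K$ extremally disconnected, the infimum of a decreasing sequence is the largest continuous function below the pointwise infimum $h$. Here $h = \infty\cdot\mathbbm{1}_{K \setminus U}$, so the order infimum is $\infty \cdot \mathbbm{1}_{\operatorname{int}(K\setminus U)}$. This uses that $\operatorname{int}(K \setminus U)$ is clopen in an extremally disconnected space: the closure of the open set $K\setminus \overline{U}$ is open.

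The main point is then the equivalence $\inf_n \frac1n\abs{f} = 0$ if and only if $\operatorname{int}(K\setminus U) = \varnothing$, i.e.\ $U$ is dense. Since $U = f^{-1}(\bR)$ is always open, this is exactly the Maeda--Ogasawara description of $E^u$. We also have to check that the infimum computed in the lattice star $C(K,\overline{\bR})$ agrees with the infimum computed in $E^{si}$. This follows because the isomorphism of the preceding theorem is an order isomorphism and so transports infima.

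The main obstacle is justifying the infimum computation in $C(K,\overline{\bR})$ rigorously. We need two things. First, $\infty\cdot\mathbbm{1}_V$ with $V$ clopen is continuous and lies below every $\frac1n g$. Second, any continuous $k \le \frac1n g$ for all $n$ satisfies $k \le 0$ on $U$ and $k \le \infty\cdot \mathbbm{1}_{\operatorname{int}(K\setminus U)}$ elsewhere. For the second, at points of $\partial U$ we use continuity of $k$ and density of $U$ in $\overline{U}$ to force $k \le 0$ there. Finally, we note that the set so obtained is closed under the vector lattice operations. So $E^{si}_F$, with the operations of $C^\infty(K)$, is $E^u$ as a vector lattice, not merely as a set.
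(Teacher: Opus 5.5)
Your proposal is correct and follows essentially the paper's route: pass to $C(K,\overline{\bR})$ and reduce the statement, via Maeda--Ogasawara, to density of the open set $f^{-1}(\bR)$. The only difference is that you compute $\inf_n \tfrac1n\abs{f} = \infty\cdot\mathbf{1}_{K\setminus\overline{U}}$ explicitly, which handles both inclusions at once and makes clear that the infimum is taken in $C(K,\overline{\bR})$ rather than in $C^\infty(K)$. The paper instead obtains one inclusion from the Archimedean property of $C^\infty(K)$, and the other by contradiction, exhibiting the nonzero lower bound $\mathbf{1}_{\bar O}\abs{f}$ of all $\tfrac1n\abs{f}$.
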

	\begin{proof}
		We will prove that
		\[ C^\infty(K) = C(K, \overline{\bR})_F = \{f \in C(K, \overline{\bR}) \colon {\textstyle \inf_{n \in \bN}} \tfrac{1}{n} \abs{f} = 0 \}. \]
		That $C^\infty(K) \subseteq C(K, \overline{\bR})_F$ follows from the fact that $C^\infty(K)$ is an Archimede-an vector lattice. Let $f \in C(K, \overline{\bR})_F$. Note that $f^{-1}(\bR)$ is open in $K$, as $f$ is continuous. Suppose $f^{-1}(\bR)$ is not dense. Then there exists a nonempty open set $O \subseteq K$ such that $O \cap f^{-1}(\bR) = \varnothing$.
		
		Since $\bar{O}$ is clopen, as $K$ is extremally disconnected, $\mathbf{1}_{\bar{O}} \in C(K)$. By continuity, $\abs{f}(t) = \infty$ for all $t \in \bar{O}$. It now follows that
		\[ 0 < \mathbf{1}_{\bar{O}} \abs{f} = \tfrac{1}{n} \mathbf{1}_{\bar{O}} \abs{f} \leq \tfrac{1}{n} \abs{f} \downarrow 0, \]
		which is absurd. Hence $f^{-1}(\bR)$ is dense in $K$ and therefore $f \in C^\infty(K)$.
	\end{proof}
	
	\section{Properties of the sup-inf-completion}
	
	This section is dedicated to generalising important notions to the sup-inf-completion, namely bands and band projections, finite and infinite parts of vectors, and various band decompositions. Throughout, $E$ is a Dedekind complete vector lattice.
	
	\begin{proposition}
		$E^{si}$ has the infinite distributivity property.
	\end{proposition}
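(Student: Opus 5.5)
We work in the concrete model $E^{si} = E^s_+ \Delta E^s_+$ from \Cref{t:unique_sup_inf_completion}. Since $E^{si}$ is Dedekind complete and has a largest element $(\infty,0)$ and a smallest element $(0,\infty)$, where $\infty$ denotes the largest element of $E^s$, every nonempty subset has a supremum and an infimum. So the existence parts are automatic, and only the identity $\sup(s\wedge A)=s\wedge\sup A$ needs proof. It suffices to prove the sup-form, since the inf-form follows by applying $-1$, which is an order anti-isomorphism.

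We first make the lattice operations explicit. For $A=\{(x_\alpha,y_\alpha)\}$ the argument before \Cref{t:unique_sup_inf_completion} gives
\[ \sup A = \Bigl(\sup_\alpha x_\alpha,\ \inf_\alpha y_\alpha\Bigr), \qquad (x,y)\wedge(z,w) = (x\wedge z,\ y\vee w). \]
Fix $s=(a,b)$. The claim then reduces to two identities in $E^s$:
\[ \sup_\alpha (a\wedge x_\alpha) = a\wedge \sup_\alpha x_\alpha \quad\text{and}\quad \inf_\alpha (b\vee y_\alpha) = b\vee \inf_\alpha y_\alpha . \]

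The first identity is the infinite distributivity of $E^s$, which was already used in the Dedekind completeness argument. It follows from \Cref{l:sup_inf_representation}(ii) by writing $a$ as the supremum of the constant net.

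The second identity is the main obstacle. It is the dual distributive law in $E^s$, and $E^s$ is only a sup-completion, so infima there are less well behaved. I would approach it through the representation $E^s\cong\{f\in C(K,\overline{\bR}) \colon f\ge x \text{ for some } x\in E\}$ from \cite{TroitskyPolavarapu}. In $C(K,\overline{\bR})$ with $K$ extremally disconnected, the infimum of a family $(g_\alpha)$ is the unique continuous function that agrees with the pointwise infimum off a meagre set. The same holds for suprema.

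The argument then runs pointwise. Since $\overline{\bR}$ is totally ordered, $\inf_\alpha(b\vee y_\alpha)(t)=b(t)\vee\inf_\alpha y_\alpha(t)$ for every $t$. Off a meagre set, the pointwise infimum $\inf_\alpha y_\alpha(t)$ coincides with the continuous function $\inf_\alpha y_\alpha$. Hence $b\vee\inf_\alpha y_\alpha$ and $\inf_\alpha(b\vee y_\alpha)$ are continuous functions agreeing on a comeagre set, which is dense by the Baire property. They are therefore equal.

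One also has to check that all infima here are taken inside the positive part. The $y_\alpha$ are $\ge 0$, so their infimum lies in $E^s_+$ and the representation applies without issue. The same meagre-set argument gives an independent proof of the first identity as well.

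Finally, the resulting pair $(\sup_\alpha(a\wedge x_\alpha),\ \inf_\alpha(b\vee y_\alpha))$ is exactly $\sup(s\wedge A)$ by the formula above. This completes the proof.
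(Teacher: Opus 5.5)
Your proof is correct and uses the same key idea as the paper. The paper simply defers to the Polavarapu--Troitsky argument: in $C(K,\overline{\bR})$ with $K$ extremally disconnected, suprema and infima agree with the pointwise ones off a meagre set, so the distributive law in the chain $\overline{\bR}$ transfers by density. The only difference is that the paper runs this directly in $C(K,\overline{\bR})\cong E^{si}$ from the preceding theorem, whereas you first split into the two coordinates of $E^s_+\Delta E^s_+$ and run it in the representation of $E^s$, which is a harmless detour.
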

	\begin{proof}
		The proof of part (viii) in \cite[Theorem 1]{TroitskyPolavarapu} can be followed.
	\end{proof}
	
	Since $E^s = \{ s \vee x \colon s \in E^{si}, x \in E\}$, $E^{si}$ has a largest element and hence a smallest element. Furthermore, note that if $F$ is an ideal of $E$, then $F^{si} \subseteq E^{si}$.
	
	\begin{notation}
		Given an ideal $F$ of $E$, we write $+\infty_F$ and $-\infty_F$ for the largest and smallest elements, respectively, in $F^{si}$.
	\end{notation}
	
	The following proposition describes the relations between positive and negative elements in the sup-completion (or inf-completion, which we recall is defined by $E^s$ equipped with the reversed order) and the sup-inf-completion. 
	
	\begin{proposition}
		Let $E$ be a Dedekind complete vector lattice. Then
		\begin{enumerate}[(i)]
			\item $(E^{si})^+ = (E^s)^+$, and
			\item $(E^{si})^- = (E^i)^-$.
		\end{enumerate}
	\end{proposition}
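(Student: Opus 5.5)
We work in the concrete model $E^{si} = E^s_+ \Delta E^s_+$, which is justified by \Cref{t:unique_sup_inf_completion}. Throughout, we use the embedding of $E^s$ into it described at the start of Section 4, namely $u \mapsto (u^+, u^-)$. Under this embedding the order of $E^s$ agrees with that of $E^{si}$: both orders are coordinatewise, with the second coordinate reversed, since $u \le v$ in $E^s$ if and only if $u^+ \le v^+$ and $u^- \ge v^-$. That equivalence itself follows from the distributivity in $E^s$ together with the decomposition $u = u^+ - u^-$, which is available because $u^- \in E$.

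For (i), we first compute the positive cone. An element $(x,y) \in E^{si}$ satisfies $(x,y) \ge (0,0)$ exactly when $x \ge 0$ and $y \le 0$. Since $y \in E^s_+$, the second condition forces $y = 0$, so
\[ (E^{si})^+ = \{(x,0) \colon x \in E^s_+\}. \]
On the other side, a positive $u \in E^s$ has $u^+ = u$ and $u^- = -(u \wedge 0) = 0$. Hence $u$ corresponds to $(u,0)$, and every $x \in E^s_+$ arises in this way. This gives the identification $(E^s)^+ = (E^{si})^+$, and it preserves the order and the positive scalar multiplication.

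For (ii), we apply the involution $s \mapsto -s$ of $E^{si}$, which sends $(x,y)$ to $(y,x)$ and reverses the order. It maps $(E^{si})^+$ bijectively onto $(E^{si})^-$, so
\[ (E^{si})^- = \{(0,y) \colon y \in E^s_+\}. \]
Recall that $E^i$ is $E^s$ with the reversed order, whose elements are thought of as $-u$ for $u \in E^s$. Its negative part is therefore $\{-u \colon u \in (E^s)^+\}$. Composing the identification from (i) with negation, $-u$ corresponds to $-(u,0) = (0,u)$, which yields (ii). Equivalently, one can repeat the argument of (i) with $E^i$ embedded via $-u \mapsto (u^-, u^+)$.

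The only delicate point is making sure the identification of $E^s$ inside $E^{si}$ is order-preserving and not just a set-level inclusion. Once that is in place, both parts are a direct computation with pairs.
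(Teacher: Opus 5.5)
Your proof is correct and follows essentially the same route as the paper. The paper argues (i) abstractly: every $s \geq 0$ satisfies $s = s \vee 0 \in \{s \vee x \colon s \in E^{si}, x \in E\} \cong E^s$. Your computation $(E^{si})^+ = \{(x,0) \colon x \in E^s_+\}$ makes the same point concretely in the pair model, and both arguments get (ii) from (i) by negation, using $E^i = -E^s$.
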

	\begin{proof}
		(i) follows immediately since $E^s = \{ s \vee x \colon s \in E^{si}, x \in E\}$, and (ii) is a consequence of the fact that $E^i = -E^s$.
	\end{proof}
	
	The absolute value on $E^s$ is defined by $\abs{x} = x^+ \vee x^-$. Since $E^{si}$ is a distributive lattice star, we also have that $\abs{x} = x^+ \vee x^-$ for all $x \in E^{si}$ by \Cref{p:star_abs_properties}(iii). Thus, the absolute value on $E^{si}$ is an extension of the absolute value on $E^s$. Next we show how to extend band projections in $E$ to the entirety of $E^{si}$. In Proposition~\ref{p:properties projections sup-inf} we prove various properties of these extensions. 
	
	\begin{remark} \label{r:projection_extensions}
		Given a band projection $\bP$ in $E$, we can extend it to $E^{si}$ by defining for any $s \in E^{si}$,
		\[ \bP(s) := \sup \{ \inf \{ \bP(x - y) \colon y \in [s^-]^\leq_+ \} \colon x \in [s^+]^\leq_+ \}. \]
		Recall that for $0 \leq u \in E^s$, we can define a band projection $\bP_u$ as in \cite[Remark 5.9]{differentiation}, i.e.  for $x \in E^+$, define
		\[ \bP_u(x) := \sup_{n \in \bN} (x \wedge n u), \]
		and then for $x \in E$, define
		\[ \bP_u(x) := \bP_u(x^+) - \bP_u(x^-). \]
		For any $s \in E^{si}$, $\abs{s} \in E^s$ and we will write $\bP_s$ for $\bP_{\abs{s}}$, and $B_s$ for $B_{\abs{s}}$. Note that if $0 \leq u, v \in E^s$, then
		\[ \bP_{u \vee v} (x) = \sup_{n \in \bN} (x \wedge n (u \vee v)) = \sup_{n \in \bN} (x \wedge nu) \vee  \sup_{n \in \bN} (x \wedge nv) = \bP_u (x) \vee \bP_v(x) \]
		for any $x \in E^+$. Thus, $B_{u \vee v} = B_u + B_v$.
	\end{remark}
	
	\begin{lemma} \label{l:supremum_order_continuous}
		Let $(s_\alpha)$ and $(t_\alpha)$ be decreasing nets in $E^{si}$. Then
		\[ \inf_\alpha (s_\alpha \vee t_\alpha) = \left( \inf_\alpha s_\alpha \right) \vee \left( \inf_\alpha t_\alpha \right). \]
	\end{lemma}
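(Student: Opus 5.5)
Throughout, ``decreasing net'' means both nets are indexed by the same directed set. Write $s := \inf_\alpha s_\alpha$ and $t := \inf_\alpha t_\alpha$; both exist because $E^{si}$ has a smallest element and is Dedekind complete. The cleanest route is to use the representation theorem above and identify $E^{si}$ with $C(K,\overline{\bR})$ as lattice stars. Since that isomorphism preserves order, it preserves all existing suprema and infima, so it suffices to prove the identity there. Two alternatives to keep in mind: the concrete model $E^s_+ \Delta E^s_+$, where $(x,y)\vee(z,w) = (x\vee z, y\wedge w)$ and infima are computed coordinatewise (an infimum in the first coordinate and a supremum in the second); or the infinite distributivity property of $E^{si}$ proved just above. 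I will use the latter to keep the argument abstract.

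The inequality $s \vee t \le s_\alpha \vee t_\alpha$ holds for every $\alpha$, so $s\vee t \le \inf_\alpha (s_\alpha\vee t_\alpha)$. For the reverse inequality, fix indices $\alpha,\beta$ and choose $\gamma \ge \alpha,\beta$. Because both nets decrease,
\[
\inf_\gamma (s_\gamma\vee t_\gamma) \le s_\gamma\vee t_\gamma \le s_\alpha \vee t_\beta .
\]
Hence $r := \inf_\gamma(s_\gamma \vee t_\gamma)$ is a lower bound of the doubly indexed family $\{s_\alpha\vee t_\beta\}$, so it suffices to show
\[
\inf_{\alpha,\beta}(s_\alpha \vee t_\beta) = s \vee t .
\]

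Now apply the infimum form of the infinite distributivity property twice. For fixed $\alpha$,
\[
\inf_\beta (s_\alpha\vee t_\beta) = s_\alpha \vee \inf_\beta t_\beta = s_\alpha \vee t .
\]
Then
\[
\inf_\alpha (s_\alpha \vee t) = t\vee \inf_\alpha s_\alpha = s\vee t .
\]
Since iterated infima agree with the joint infimum (both are the greatest lower bound of the same set), this gives the result.

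The only delicate point is making sure the infimum form of infinite distributivity is available in $E^{si}$. The definition states it as equivalent to the supremum form, via the involution $x\mapsto -x$, which is an order anti-isomorphism exchanging $\vee$ and $\wedge$. If one prefers not to rely on this, the coordinatewise check in $E^s_+\Delta E^s_+$ works directly. In the first coordinate one needs $\inf_\alpha(x_\alpha\vee z_\alpha) = (\inf x_\alpha)\vee(\inf z_\alpha)$ for decreasing nets in $E^s$, which is the inf-form of distributivity in $E^s$. In the second coordinate one needs $\sup_\alpha (y_\alpha\wedge w_\alpha) = (\sup y_\alpha)\wedge(\sup w_\alpha)$ for increasing nets, which is exactly \Cref{l:sup_inf_representation}(ii) combined with the same cofinality argument. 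I expect the first-coordinate identity in $E^s$ to be the main obstacle if it is needed. I would obtain it from the $C(K,\overline{\bR})$ picture, where infima of decreasing nets of continuous functions are computed pointwise off a meagre set and taking maxima commutes with pointwise limits.
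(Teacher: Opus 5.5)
Your argument is correct and is essentially the paper's proof. Both use the easy inequality, then the cofinality bound $s_\gamma\vee t_\gamma\le s_\alpha\vee t_\beta$ for $\gamma\ge\alpha,\beta$, and then two applications of the infimum form of the infinite distributivity property of $E^{si}$. The only difference is that the paper phrases this with an arbitrary lower bound $u$, obtaining first $u\le s\vee t_\beta$ and then $u\le s\vee t$, instead of passing through the doubly indexed infimum. The alternative routes you sketch at the end are not needed.
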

	\begin{proof}
		Let $s := \inf_\alpha s_\alpha$ and $t := \inf_\alpha t_\alpha$. It is clear that $s \vee t \leq \inf_\alpha (s_\alpha \vee t_\alpha)$. Now suppose $u \leq s_\alpha \vee t_\alpha$ for all $\alpha$.
		Fix $\beta$, then for $\alpha \geq \beta$,
		\[ u \leq s_\alpha \vee t_\alpha \leq s_\alpha \vee t_\beta. \]
		By the infinite distributivity property,
		\[ u \leq \inf_{\alpha \geq \beta} (s_\alpha \vee t_\beta) = s \vee t_\beta. \]
		Similarly, $u \leq s \vee t$. Thus, $s \vee t = \inf_\alpha (s_\alpha \vee t_\alpha)$.
	\end{proof}
	
	\begin{proposition}\label{p:properties projections sup-inf}
		Let $s, t \in E^{si}$ and let $\bP$ be a band projection in $E^{si}$.
		\begin{enumerate}[(i)]
			\item If $s \leq t$, then $\bP(s) \leq \bP(t)$,
			\item $\bP(s \vee t) = \bP(s) \vee \bP(t)$ and $\bP(s \wedge t) = \bP(s) \wedge \bP(t)$,
			\item $\bP(\lambda s) = \lambda \bP(s)$ for any $\lambda \in \bR$,
			\item if $s \geq 0$, then $\bP(s) \leq s$,
			\item $\bP(s) = \sup \{ \inf \{ x - y \colon y \in [\bP(s^-)]^\leq_+\} \colon x \in [\bP(s^+)]^\leq_+ \}$, and
			\item $s \in \bP(E^{si})$ if and only if $s = \bP(s)$.
		\end{enumerate}
	\end{proposition}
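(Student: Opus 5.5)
The cleanest route is to transport everything to the concrete model $E^s_+ \Delta E^s_+$ and compute there. Both sides of the definition in \Cref{r:projection_extensions} are then explicit, and (i)--(vi) reduce to \Cref{l:sup_completion_projection_distribute_sup_inf} and \Cref{p:projection_image_ideal}. The first and main step is to identify the extension concretely. I claim that for $s=(a,b)$, so that $s^+=a$ and $s^-=b$ in $E^s_+$ with $a\wedge b=0$,
\[ \bP(s) = (\bP(a),\bP(b)), \]
where on the right $\bP$ is the extension to $E^s$ from the earlier Remark.

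To prove the claim, fix $x\in[a]^\leq_+$. The inner infimum is $\inf\{(\bP x,\bP y)\colon y\in [b]^\leq_+\}$, using $\bP(x-y)=\bP x-\bP y$ and that $\bP x\wedge \bP y\le x\wedge y\le a\wedge b=0$. In the model, infima act coordinatewise: infimum in the first slot and supremum in the second. So the inner infimum equals $(\bP x, \sup_y \bP y)$, provided the first coordinate stays disjoint from the second. This holds because, by the infinite distributivity of $E^s$, $\bP x\wedge \bP b=\sup_y(\bP x\wedge \bP y)=0$. Since $\sup_y\bP y=\bP(b)$ by definition of the extension, the inner infimum is $(\bP x,\bP b)$. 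The outer supremum over $x$ then gives $(\bP a,\bP b)$ in the same way.

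This step is the main obstacle. One must check that the sup/inf formulas computed in the model, with disjointness preserved, really coincide with those in the abstract $E^{si}$. One must also check that $\bP$ on $E^s$ restricted to $E^s_+$ preserves disjointness, which follows from \Cref{l:sup_completion_projection_distribute_sup_inf}(ii) since $\bP(a)\wedge\bP(b)=\bP(a\wedge b)=0$.

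With the formula in hand, the remaining parts are direct.
\begin{itemize}
\item (i): $s\le t$ means $a\le c$ and $d\le b$, and monotonicity of $\bP$ on $E^s$ transfers.
\item (ii): $(a,b)\vee(c,d)=(a\vee c,b\wedge d)$, and \Cref{l:sup_completion_projection_distribute_sup_inf} gives $\bP(a\vee c)=\bP a\vee\bP c$ and $\bP(b\wedge d)=\bP b\wedge\bP d$; the meet is symmetric.
\item (iii): this uses positive homogeneity of $\bP$ on $E^s$, together with the fact that $-s$ swaps the coordinates.
\item (iv): if $s\ge0$ then $b=0$, and $\bP a\le a$ because $\bP y\le y$ for $y\in E^+$.
\item (v): by (ii), $\bP(s)^+=\bP(s)\vee 0=\bP(s^+)$ and $\bP(s)^-=\bP(s^-)$. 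So (v) is just property (ii) of \Cref{t:unique_sup_inf_completion} applied to $\bP(s)$.
\end{itemize}

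For (vi), the ``if'' direction is trivial. Conversely, if $s=\bP(t)=(\bP c,\bP d)$, then $a=\bP c$ and $b=\bP d$ lie in $\bP(E^s)$. Idempotence of the extension on $E^s$, obtained from $\bP\bP=\bP$ on $E$ and left order continuity (as in the proof of \Cref{p:projection_image_ideal}), gives $\bP a=a$ and $\bP b=b$, hence $\bP(s)=s$.
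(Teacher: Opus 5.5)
Your proof is correct, but it takes a different route from the paper. The paper works throughout with the abstract formula of \Cref{r:projection_extensions}:
(i) follows by comparing the index sets $[s^\pm]^\le_+$ and $[t^\pm]^\le_+$;
(ii) follows by splitting $x=(x\wedge s^+)\vee(x\wedge t^+)$ and invoking \Cref{l:supremum_order_continuous};
(iii) for $\lambda<0$ follows by rewriting a $\sup\inf$ as $-\inf\sup$, which tacitly uses an inf--sup form of the formula with $\bP$ inside, in the spirit of \Cref{p:inf_sup_completion};
(iv)--(vi) are handled essentially as you do.
You instead prove once the coordinatewise identity $\bP(a,b)=(\bP a,\bP b)$ in the model $E^s_+\Delta E^s_+$. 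After that, (i)--(iv) and (vi) reduce to monotonicity, positive homogeneity, \Cref{l:sup_completion_projection_distribute_sup_inf} and idempotence of the extension on $E^s$.

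The transport issue you flag is harmless. A lattice star isomorphism fixing $E$ is an order isomorphism, so it preserves all suprema, infima and the maps $s\mapsto s^\pm$; the paper itself also identifies $E^{si}$ with the model.

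Your route is shorter and more robust. The coordinate swap makes $\bP(-s)=-\bP(s)$ trivial. It also avoids a misstep in the paper's proof of (ii), where the inequality
$\inf\{\bP(x-y)\colon y\in[s^-\wedge t^-]^\le_+\}\le\inf\{\bP(x-y)\colon y\in[s^-]^\le_+\}$
runs the wrong way. For example, $E=\bR$, $\bP=I$, $s=-5$, $t=3$, $x=0$ gives $0\le-5$. The bound needed there genuinely uses $t$, and your formula supplies it via $\bP(s^-\wedge t^-)=\bP(s^-)\wedge\bP(t^-)$.

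The paper's approach, in turn, has the merit of being intrinsic: it never fixes a particular representation of $E^{si}$.
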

	\begin{proof}
		To prove (i), suppose $s \leq t$. Then $s^+ \leq t^+$ and $t^- \leq s^-$, and hence, for every $x \in [s^+]^\leq_+$,
		\[ \inf \{ \bP(x - y) \colon y \in [s^-]^\leq_+ \} \leq \inf \{ \bP(x - y) \colon y \in [t^-]^\leq_+ \}. \]
		Therefore,
		\begin{align*}
			\bP(s)
			& = \sup \{ \inf \{ \bP(x - y) \colon y \in [s^-]^\leq_+ \} \colon x \in [s^+]^\leq_+ \} \\
			& \leq \sup \{ \inf \{ \bP(x - y) \colon y \in [t^-]^\leq_+ \} \colon x \in [s^+]^\leq_+ \} \\
			& \leq \sup \{ \inf \{ \bP(x - y) \colon y \in [t^-]^\leq_+ \} \colon x \in [t^+]^\leq_+ \} \\
			& = \bP(t).
		\end{align*}
		We prove (ii). By (i), we have that $\bP(s) \vee \bP(t) \leq \bP(s \vee t)$. Now suppose $\bP(s) \vee \bP(t) \leq u$ for some $u \in E^{si}$. Then, for any $x \in [s^+]^\leq_+$, we have that
		\[ \inf \{ \bP(x - y) \colon y \in [s^- \wedge t^-]^\leq_+ \} \leq \inf \{ \bP(x - y) \colon y \in [s^-]^\leq_+ \} \leq u. \]
		Similarly, for any $x \in [t^+]^\leq_+$, we have that
		\[ \inf \{ \bP(x - y) \colon y \in [s^- \wedge t^-]^\leq_+ \} \leq u. \]
		Let $x \in [s^+ \vee t^+]^\leq_+$. Define $x_s := x \wedge s^+$ and $x_t := x \wedge t^+$, then $x = x_s \vee x_t$. By \Cref{l:supremum_order_continuous},
		\begin{align*}
			\inf \{ &\bP(x - y) \colon y \in [s^- \wedge t^-]^\leq_+ \}\\
			& = \inf \{ \bP(x_s - y) \vee \bP(x_t - y) \colon y \in [s^- \wedge t^-]^\leq_+ \} \\
			& = \left( \inf \{ \bP(x_s - y) \colon y \in [s^- \wedge t^-]^\leq_+ \} \right)  \vee \left( \inf \{\bP(x_t - y) \colon y \in [s^- \wedge t^-]^\leq_+ \} \right) \\
			& \leq u.
		\end{align*}
		Therefore,
		\[ \bP(s \vee t) = \sup \{ \inf \{ x - y \colon y \in [s^- \vee t^-]^\leq_+\} \colon x \in [s^+ \vee t^+]^\leq_+ \} \leq u. \]
		This proves that $\bP(s \vee t) = \bP(s) \vee \bP(t)$. That $\bP(s \wedge t) = \bP(s) \wedge \bP(t)$ will follow from (iii), which we prove next. For $ \lambda > 0$,
		\begin{align*}
			\bP(\lambda s)
			& = \sup \{ \inf \{ \bP(x - y) \colon y \in [\lambda s^-]^\leq_+ \} \colon x \in [\lambda s^+]^\leq_+ \} \\
			& = \sup \{ \inf \{ \lambda \bP(x - y) \colon y \in [s^-]^\leq_+ \} \colon x \in [s^+]^\leq_+ \} \\
			& = \lambda \sup \{ \inf \{ \bP(x - y) \colon y \in [s^-]^\leq_+ \} \colon x \in [s^+]^\leq_+ \} \\
			& = \lambda \bP(s).
		\end{align*}
		Additionally,
		\begin{align*}
			\bP(-s)
			& = \sup \{ \inf \{ \bP(x - y) \colon y \in [s^+]^\leq_+ \} \colon x \in [s^-]^\leq_+ \} \\
			& = - \inf \{ \sup \{ \bP(y - x) \colon y \in [s^+]^\leq_+ \} \colon x \in [s^-]^\leq_+ \} \\
			& = - \bP(s).
		\end{align*}
		To prove (iv), suppose $s \geq 0$. Then,
		\[ \bP(s) = \sup \{ \bP(x) \colon x \in [s]^\leq_+ \}. \]
		For every $x \in [s]^\leq_+$, $\bP(x) \leq x \leq s$ and hence $\bP(s) \leq s$. Note that by (ii) $\bP(s)^+ = \bP(s) \vee 0 = \bP(s \vee 0) = \bP(s^+)$ and, similarly, $\bP(s)^- = \bP(s^-)$. Then (v) follows from \Cref{t:unique_sup_inf_completion}(ii). From (v), it follows that $\bP(\bP(s)) = \bP(s)$ since we know $\bP^2 = \bP$ on $E^s$. Now, if $s = \bP(t)$ for some $t \in \bP(E^{si})$, then
		\[ s = \bP(t) = \bP(\bP(t)) = \bP(s), \]
		which proves (vi).
	\end{proof}
	
	\begin{proposition}
		The bands in $E^{si}$ are precisely the subsets $\bP(E^{si})$ where $\bP$ is a band projection in $E$ extended to $E^{si}$.
	\end{proposition}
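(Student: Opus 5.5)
Two inclusions are needed: every $\bP(E^{si})$ is a band, and every band $B$ of $E^{si}$ has this form.

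\emph{$\bP(E^{si})$ is a band.} By \Cref{p:properties projections sup-inf}(vi), $\bP(E^{si})=\{s\colon \bP(s)=s\}$.
\begin{itemize}
\item Closure under scalars, $\vee$ and $\wedge$ follows from (ii) and (iii).
\item For the ideal property, let $|s|\le|t|$ with $t=\bP(t)$. Then $|t|=\bP(|t|)\in \bP(E^s)$, so by \Cref{p:projection_image_ideal} the elements $s^+,s^-\le |s|\le |t|$ lie in $\bP(E^s)$, i.e.\ $\bP(s^\pm)=s^\pm$. Hence (v) together with \Cref{t:unique_sup_inf_completion}(ii) gives $\bP(s)=s$.
\item For band closure, let $A\subseteq \bP(E^{si})$ with supremum $a$. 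Then $a^+=\sup\{s^+\colon s\in A\}$ and $a^-=\inf\{s^-\colon s\in A\}$, as in the description of suprema in $E^s_+\Delta E^s_+$. The extension of $\bP$ to $E^s$ is left order continuous, so $\bP(a^+)=\sup \bP(s^+)=a^+$. Also $a^-\le s^-=\bP(s^-)$ for any fixed $s\in A$, so $a^-\in\bP(E^s)$ by \Cref{p:projection_image_ideal}. Again (v) gives $\bP(a)=a$.
\end{itemize}

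\emph{Every band $B$ is some $\bP(E^{si})$.} Set $F:=B\cap E$. This is a band in $E$: it is an ideal of $E$, and suprema in $E$ of subsets of $F$ coincide with their suprema in $E^{si}$ because $E$ is Dedekind complete and these suprema are computed coordinatewise. Let $\bP$ be the band projection onto $F$.

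For $\bP(E^{si})\subseteq B$: if $s=\bP(s)$, then $s^+=\sup\{\bP(x)\colon x\in[s^+]^\le_+\}$ is a supremum of elements of $F\subseteq B$, so $s^+\in B$. Likewise $s^-=\bP(s^-)\in B$. Then $|s|=s^+\vee s^-\in B$, and $s\in B$ by the ideal property.

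For $B\subseteq \bP(E^{si})$: let $s\in B$. Then $|s|\in B$, and $s^\pm\in B$ since $|s^\pm|=s^\pm\le|s|$ (valuatedness, \Cref{p:star_abs_properties}). Every $x\in[s^\pm]^\le_+$ satisfies $|x|\le s^\pm$, so $x\in B\cap E=F$, and hence $\bP(x)=x$. Taking suprema, $\bP(s^\pm)=\sup\{\bP(x)\}=s^\pm$, using (iv) and the formula for $\bP$ on positive elements. By (v), $\bP(s)=s$.

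The main obstacle is the claim that $F$ is a band of $E$ and that the ideal property transfers correctly. This requires checking that $|x|\le|y|$ in $E^{si}$ agrees with the vector-lattice inequality for $x,y\in E$, and that suprema of subsets of $E$ bounded in $E$ agree in $E$ and $E^{si}$. I would verify both through the identification $x\mapsto(x^+,x^-)$ and the coordinatewise formula for suprema. One also needs $0\in B$; this follows from $0=0\cdot s$ for any $s\in B$, assuming $B$ is nonempty.
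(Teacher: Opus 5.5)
Your argument is correct, but for the harder direction (every band has the form $\bP(E^{si})$) you take a different route from the paper.

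\textbf{How the paper does it.} The paper does not pass through the trace $B\cap E$. It takes $u:=\sup B$, which lies in $B$, is positive, and satisfies $nu=u$. It then uses the projection $\bP_u$ from \Cref{r:projection_extensions}, for which $\bP_u(x)=x\wedge u$ on $E^+$. With this, $B\subseteq\bP_u(E^{si})$ because every $x\in[s^\pm]^\le_+$ lies in $B$ and hence below $u$. Conversely, $\bP_u(E^{si})\subseteq B$ because each $\bP_u(x-y)$ lies between $-u$ and $u$, hence in $B$, and $B$ is closed under the suprema and infima in the representation formula.

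\textbf{How yours differs.} You recover the band of $E$ as the trace $F=B\cap E$. In fact $F=\{x\in E\colon |x|\le u\}=B_u$, so both routes produce the same projection.
\begin{itemize}
\item Your version makes explicit the correspondence $B\leftrightarrow B\cap E$ asserted in the subsequent remark. It also needs nothing beyond ordinary band projections in $E$.
\item The paper instead gets a band of $E$ ready-made from the external construction of $\bP_u$ for $u\in E^s$, so it has nothing to verify about $F$.
\end{itemize}

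\textbf{What you still need to check.} Your route requires showing that $F$ is a band of $E$, and one step there is asserted rather than proved: closure of $F$ under addition. This is not automatic, since $B$ is only a lattice substar and $E^{si}$ has no addition. It follows from $|x+y|\le 2(|x|\vee|y|)$, the fact that $2(|x|\vee|y|)\in B$, and solidity of $B$.

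\textbf{The first direction.} Your ideal-property step is the paper's. Your coordinatewise band-closure argument, using $a^+=\sup_{s\in A}s^+$ and $a^-=\inf_{s\in A}s^-$, replaces the paper's device $A'=A\vee x$ with $|\sup A'|\le\sup|A'|$. It is more explicit about why the supremum lands in $\bP(E^s)$.

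One minor point: left order continuity applies to upward directed nets, and $\{s^+\colon s\in A\}$ need not be directed. You can either pass to finite suprema using $\bP(x\vee y)=\bP(x)\vee\bP(y)$, or simply note that $\bP(a^+)\ge\bP(s^+)=s^+$ for all $s\in A$, while $\bP(a^+)\le a^+$ by (iv).
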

	\begin{proof}
		Let $\bP$ be a band projection in $E$ extended to $E^{si}$. Consider $s \in E^{si}$. Then $s^+, s^- \in \bP(E^s)$ if and only if $\abs{s} \in \bP(E^s)$ by \Cref{p:projection_image_ideal}. By \Cref{p:properties projections sup-inf}(ii, iii, vi), it is clear that $\bP(E^{si})$ is a lattice substar of $E^{si}$ and that $s \in \bP(E^{si})$ implies $s^+, s^- \in E^{si}$. Conversely, if $s^+, s^- \in \bP(E^s)$, then
		\begin{align*}
			s
			& = \sup \{ \inf \{ x - y \colon y \in [s^-]^\leq_+ \} \colon x \in [s^+]^\leq_+ \} \\
			& = \sup \{ \inf \{ x - y \colon y \in [\bP(s^-)]^\leq_+ \} \colon x \in [\bP(s^+)]^\leq_+ \} \\
			& = \bP(s).
		\end{align*}
		Therefore, $s \in \bP(E^{si})$ if and only if $\abs{s} \in \bP(E^s)$. If $t \in E^{si}$ with $\abs{t} \leq \abs{s}$, then $\abs{t} \in \bP(E^s)$ since $\bP(E^s)$ is an ideal in $E^s$, and hence $t \in \bP(E^{si})$.
		
		Let $A \subseteq \bP(E^{si})$ be bounded above. Define $A' = A \vee x$ for some element $x \in A$. Then $A'$ is bounded above and below, with $\sup(A) = \sup(A')$. Now $\abs{A'} := \{\abs{s} \colon s \in A'\}$ is bounded above, and hence $\sup (\abs{A'}) \in \bP(E^s)$. Since $\abs{\sup(A)} = \abs{\sup(A')} \leq \sup(\abs{A'})$, $\sup(A) \in \bP(E^{si})$. This concludes the proof that $\bP(E^{si})$ is a band in $E^{si}$.
		
		Let $B$ be an arbitrary band in $E^{si}$, and let $u := \sup B$. Since $0 \in B$, $0 \leq u \in E^s$. Note also that $nu = u$ for all $n \in \bN$. We will show that $B = \bP_u(E^{si})$. For any $x \in E^+$,
		\[ \bP_u(x) = \sup_{n \in \bN} \{x \wedge n u \} = x \wedge u. \]
		Let $s \in B$. Then, for any $x \in [s^+]^\leq_+$ and $y \in [s^-]^\leq_+$, we have that $x, y \in B$ and so $\bP_u(x) = x \wedge u = x$ and $\bP_u(y) = y \wedge u = y$. Therefore,
		\begin{align*}
			\bP_u(s)
			& = \sup \{ \inf \{ \bP_u(x - y) \colon y \in [s^-]^\leq_+ \} \colon x \in [s^+]^\leq_+ \} \\
			& = \sup \{ \inf \{ x - y \colon y \in [s^-]^\leq_+ \} \colon x \in [s^+]^\leq_+ \} \\
			& = s.
		\end{align*}
		Thus, $B \subseteq \bP_u(E^{si})$. Let $s = \bP_u(t)$ for some $t \in E^{si}$. Now,
		\[ s = \sup \{ \inf \{ \bP_u(x - y) \colon y \in [t^-]^\leq_+ \} \colon x \in [t^+]^\leq_+ \}. \]
		For such $x, y$, we have that $-u \leq \bP_u(x - y) \leq u$ so $\bP_u(x - y) \in B$ and hence $s \in B$.
	\end{proof}
	
	\begin{remark}
		There is a one-to-one correspondence between bands in $E$ and $E^{si}$, but also with bands in $E^{u}$, $E^s$, or $E^i$. We will always write $B$ for the band in $E$, $B^{si}$ for the band in $E^{si}$, $B^u$ for the band in $E^u$, etc. On the other hand, we will write $\bP$ for the band projection of $B^{si}$ as this is the largest of the extensions of the band projection of $B$. \end{remark}
	
	The following two lemmas will prove useful when dealing with sums of bands.
	
	\begin{lemma} \label{l:adding_projections}
		Let $0 \leq s \in E^s$. Further, let $B, B_1$, and $B_2$ be bands such that $B = B_1 + B_2$, then $\bP(s) = \bP_1(s) \vee \bP_2(s)$.
	\end{lemma}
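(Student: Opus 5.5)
The plan is to reduce the identity to elements of $E^+$ and then pass to $E^s$ through the left-order continuity of the extended projections. Since $0 \le s \in E^s$, the extension from the Remark after the sup-completion proposition gives
\[ \bP(s) = \sup\{\bP(x) \colon x \in [s]^\leq_+\}, \]
and likewise for $\bP_1$ and $\bP_2$. By Proposition~\ref{p:properties projections sup-inf}(v), this agrees with the extension to $E^{si}$, so it suffices to work in $E^s$.

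\emph{Step 1: the identity on $E^+$.} Fix $x \in E^+$. We will show that $\bP(x) = \bP_1(x) \vee \bP_2(x)$. Since $B_1, B_2 \subseteq B$, we have $\bP_i(x) \le \bP(x)$, so $\bP_1(x)\vee\bP_2(x) \le \bP(x)$. For the reverse inequality, set $z := \bP(x) - \bP_1(x)\vee\bP_2(x) \ge 0$, which lies in $B$. We have $\bP_1(z) = \bP_1(x) - \bP_1(\bP_1(x)\vee\bP_2(x))$. Band projections are lattice homomorphisms, and $\bP_1\bP_2(x) \le \bP_1(x)$ because $0 \le \bP_2(x) \le x$. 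Hence
\[ \bP_1(\bP_1(x)\vee\bP_2(x)) = \bP_1(x)\vee\bP_1\bP_2(x) = \bP_1(x), \]
so $\bP_1(z) = 0$, and similarly $\bP_2(z) = 0$. Thus $z \in B_1^d \cap B_2^d = (B_1+B_2)^d = B^d$. Together with $z \in B$, this gives $z = 0$. (Alternatively, one could compare with the computation $\bP_{u\vee v} = \bP_u \vee \bP_v$ in Remark~\ref{r:projection_extensions}, but the argument above does not need generators.)

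\emph{Step 2: passing to $s \in E^s_+$.} Take suprema over $x \in [s]^\leq_+$:
\[ \bP(s) = \sup_{x}\big(\bP_1(x)\vee\bP_2(x)\big) = \Big(\sup_x \bP_1(x)\Big)\vee\Big(\sup_x \bP_2(x)\Big) = \bP_1(s)\vee\bP_2(s). \]
The middle equality is \Cref{l:sup_inf_representation}(i) applied to the nets $(\bP_1(x))_x$ and $(\bP_2(x))_x$, indexed by the upward directed set $[s]^\leq_+$. Both nets are increasing because the projections are positive. This gives $\sup_{x,x'}(\bP_1(x)\vee\bP_2(x'))$, and that double supremum reduces to the diagonal by directedness: for any $x, x'$, the element $x\vee x'$ dominates both terms.

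The main point of care is Step 1, specifically the justification that $B_1^d\cap B_2^d = B^d$ when $B = B_1+B_2$, and that $\bP_1$ fixes $\bP_1(x)\vee\bP_2(x)$ up to $\bP_1(x)$. Both are standard in a Dedekind complete vector lattice. Step 2 needs only the directedness of $[s]^\leq_+$, which holds because it is closed under finite suprema.
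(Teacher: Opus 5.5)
Your proof is correct and follows the same route as the paper. You write $\bP(s)=\sup\{\bP(x)\colon x\in[s]^\leq_+\}$, use $\bP(x)=\bP_1(x)\vee\bP_2(x)$ for $x\in E^+$, and pull the supremum through $\vee$, and you also supply justifications for the $E^+$ identity and for the interchange, both of which the paper leaves implicit. (One small citation fix: the formula for $\bP(s)$ comes straight from the definition in Remark~\ref{r:projection_extensions} with $s^-=0$, not from Proposition~\ref{p:properties projections sup-inf}(v).)
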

	\begin{proof}
		We have
		\begin{align*}
			\bP(s)
			& = \sup \{ \inf \{ \bP(x - y) \colon y \in [s^-]^\leq_+ \} \colon x \in [s^+]^\leq_+ \} \\
			& = \sup \{ \bP(x) \colon x \in [s]^\leq_+ \} \\
			& = \sup \{ \bP_1(x) \vee \bP_2(x) \colon x \in [s]^\leq_+\} \\
			& = \left( \sup \{ \bP_1(x) \colon x \in [s]^\leq_+ \} \right) \vee \left( \sup \{ \bP_2(x) \colon x \in [s]^\leq_+ \} \right) \\
			& = \bP_1(s) \vee \bP_2(s). \qedhere
		\end{align*}
	\end{proof}
	
	\begin{lemma} \label{l:adding_bands_inequalities}
		Let $s, t \in E^{si}$. If $B = B_1 + B_2$ as bands, if $\bP_1(s) \leq \bP_1(t)$, and if $\bP_2(s) \leq \bP_2(t)$, then $\bP(s) \leq \bP(t)$.
	\end{lemma}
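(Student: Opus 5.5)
The idea is to reduce the inequality to the positive and negative parts, which live in $E^s$, and then apply \Cref{l:adding_projections} to each part separately.

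\emph{Reduction to positive and negative parts.} In the distributive lattice star $E^{si}$, every element is recovered from its positive and negative parts via \Cref{t:unique_sup_inf_completion}(ii). Moreover, the representation in \Cref{t:unique_sup_inf_completion}(ii) is monotone in $(u^+,u^-)$. Hence $u\le v$ holds if and only if $u^+\le v^+$ and $v^-\le u^-$. The forward direction holds because $(\cdot)^+=\cdot\vee 0$ and $(\cdot)^-=(-\cdot)\vee 0$ are increasing and decreasing, respectively. For the reverse direction, the sets $[u^+]^\leq_+$ grow and the sets $[u^-]^\leq_+$ shrink as we pass from $u$ to $v$, so the sup-inf expression can only increase.

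\emph{Applying this to the projections.} By \Cref{p:properties projections sup-inf}(ii), $\bP_i(s)^+=\bP_i(s^+)$ and $\bP_i(s)^-=\bP_i(s^-)$, and the same holds for $\bP$. The hypotheses therefore become
\[
\bP_i(s^+)\le \bP_i(t^+) \quad\text{and}\quad \bP_i(t^-)\le \bP_i(s^-) \qquad \text{for } i=1,2.
\]
Since $s^\pm,t^\pm$ are positive elements of $E^s$, \Cref{l:adding_projections} gives
\[
\bP(s^+)=\bP_1(s^+)\vee\bP_2(s^+)\le \bP_1(t^+)\vee\bP_2(t^+)=\bP(t^+),
\]
and in the same way $\bP(t^-)\le \bP(s^-)$. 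Together these give $\bP(s)^+\le\bP(t)^+$ and $\bP(t)^-\le \bP(s)^-$, and the characterization above yields $\bP(s)\le\bP(t)$.

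\emph{Main obstacle.} The delicate step is the characterization of $u\le v$ via positive and negative parts. If one prefers not to invoke monotonicity of the representation directly, there is an alternative. Distributivity and the valuated property give $u=u^+\wedge(-u^-)$ in $E^{si}$. To check this, note that $u^+\wedge(-u^-)=(u\vee0)\wedge(u\wedge0)$, and by distributivity this equals $u\vee(0\wedge u\wedge 0)$, which is $u$. With this identity the reverse implication is immediate from monotonicity of $\wedge$ and of negation, and I would use this argument.
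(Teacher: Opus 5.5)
Your first argument is correct and is essentially the paper's proof. You apply \Cref{l:adding_projections} to the positive elements $s^\pm, t^\pm\in E^s$ to get $\bP(s^+)\le\bP(t^+)$ and $\bP(t^-)\le\bP(s^-)$. You then conclude by monotonicity of the sup-inf representation of $\bP(s)$ and $\bP(t)$ in terms of $\bP(s^\pm)$ and $\bP(t^\pm)$, which is \Cref{p:properties projections sup-inf}(v). You also make explicit a step the paper leaves tacit: $\bP_i(s)\le\bP_i(t)$ yields $\bP_i(s^+)\le\bP_i(t^+)$ and $\bP_i(t^-)\le\bP_i(s^-)$ because $\bP_i(s)^\pm=\bP_i(s^\pm)$.

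The alternative in your last paragraph, which you say you would actually use, is wrong. In any lattice $-u^-=u\wedge 0\le u\le u\vee 0$, so always
\[ u^+\wedge(-u^-)=(u\vee 0)\wedge(u\wedge 0)=u\wedge 0=-u^-. \]
For $u=1\in\overline{\bR}$ this gives $0\neq u$. Your distributive step fails because $u\wedge 0$ is not of the form $u\vee z$ unless $u\le 0$, so $(u\vee 0)\wedge(u\wedge 0)$ cannot be rewritten as $u\vee(0\wedge z)$. In fact, no expression built from $u^+$ and $u^-$ alone by lattice and scalar operations recovers $u$ in $\overline{\bR}$. The representation in \Cref{t:unique_sup_inf_completion}(ii), which uses subtraction inside $E$, is exactly what stands in for the missing identity $u=u^+-u^-$.

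So either keep the monotonicity argument, or, to avoid the representation, use distributivity while keeping $u$ itself in play. Suppose $u\vee 0\le v\vee 0$ and $u\wedge 0\le v\wedge 0$ (the latter is $-u^-\le -v^-$). Then
\[ u=u\wedge(u\vee 0)\le u\wedge(v\vee 0)=(u\wedge v)\vee(u\wedge 0)\le (u\wedge v)\vee(v\wedge 0)\le v. \]
This gives the reverse implication from the distributivity of $E^{si}$ alone, without \Cref{t:unique_sup_inf_completion}(ii).
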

	\begin{proof}
		Note that $\bP(s^+) = \bP_1(s^+) \vee \bP_2(s^+) \leq \bP_1(t^+) \vee \bP_2(t^+) = \bP(t^+)$ and $\bP(s^-) = \bP_1(s^-) \vee \bP_2(s^-) \geq \bP_1(t^-) \vee \bP_2(t^-) = \bP(t^-)$ by \Cref{l:adding_projections}. Therefore,
		\begin{align*}
			\bP(s)
			& = \sup \{ \inf \{ x - y \colon y \in [\bP(s^-)]^\leq_+ \} \colon x \in [\bP(s^+)]^\leq_+ \} \\
			& \leq \sup \{ \inf \{ x - y \colon y \in [\bP(t^-)]^\leq_+ \} \colon x \in [\bP(t^+)]^\leq_+ \} \\
			& = \bP(t). \qedhere
		\end{align*}
	\end{proof}
	
	Although we lack a general notion of addition in $E^{si}$, we can add bands and band projections in $E^{si}$, as this corresponds to the addition of bands in $E$. Thus we can prove the analogous statement $B_s = B_{s^+} + B_{s^-}$.  
	
	\begin{proposition}
		Let $s \in E^{si}$. Then
		\[ B_s = B_{s^+} \oplus B_{s^-}. \]
	\end{proposition}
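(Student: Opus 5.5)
Since $B_s$ is by definition $B_{\abs{s}}$, the plan is to express $\abs{s}$ through $s^+$ and $s^-$, apply the observation at the end of \Cref{r:projection_extensions}, and then check disjointness. Because $E^{si}$ is a distributive valuated lattice star, \Cref{p:star_abs_properties}(iii) gives $\abs{s} = s^+ \vee s^-$. Here $s^+, s^- \in (E^{si})^+ = (E^s)^+$, so both are positive elements of $E^s$. The last computation in \Cref{r:projection_extensions} gives $B_{u \vee v} = B_u + B_v$ for $0 \le u, v \in E^s$. Taking $u = s^+$ and $v = s^-$ yields
\[ B_s = B_{\abs{s}} = B_{s^+ \vee s^-} = B_{s^+} + B_{s^-}. \]

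It remains to show that the sum is direct, that is, $B_{s^+} \cap B_{s^-} = \{0\}$. For this I will use that $s^+ \wedge s^- = 0$ in $E^s$, which is \Cref{p:star_abs_properties}(ii) since $E^{si}$ is valuated. Take $0 \le x \in B_{s^+} \cap B_{s^-}$ in $E$, so that $x = \bP_{s^+}(x) = \bP_{s^-}(x)$. Then
\[ x = \sup_n (x \wedge n s^+) \wedge \sup_m (x \wedge m s^-). \]
Applying \Cref{l:sup_inf_representation}(ii) (or the infinite distributivity of $E^s$), this equals
\[ \sup_{n,m} \bigl( x \wedge n s^+ \wedge m s^- \bigr). \]
For each pair $n, m$ we have $n s^+ \wedge m s^- \le \max(n,m)\,(s^+ \wedge s^-) = 0$, using positive homogeneity of the lattice operations in $E^s$. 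Hence every term in the supremum is $0$, so $x = 0$. Since the bands of $E$, $E^s$ and $E^{si}$ correspond one-to-one, the band $B_s$ of $E$ (equivalently of $E^{si}$) is the direct sum $B_{s^+} \oplus B_{s^-}$.

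The only delicate step is the disjointness argument. It requires distributing $\wedge$ over suprema in $E^s$ and scaling disjointness, $n s^+ \wedge m s^- = 0$. In $E^s$ this is not automatic, because there is no subtraction and the usual Riesz decomposition is unavailable. To obtain it I will use $n s^+ \wedge m s^- \le k(s^+ \wedge s^-)$ with $k = \max(n,m)$, which holds because $a \mapsto k a$ is an order isomorphism of $E^s$ for $k > 0$. Everything else follows directly from the remark and \Cref{p:star_abs_properties}.
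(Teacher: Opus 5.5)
Your proof is correct and takes essentially the paper's route. Writing $\abs{s} = s^+ \vee s^-$ and using $\bP_{u \vee v} = \bP_u \vee \bP_v$ from \Cref{r:projection_extensions} gives $B_s = B_{s^+} + B_{s^-}$, and disjointness comes from $s^+ \wedge s^- = 0$. The differences are only in presentation: the paper re-derives the two band inclusions from $\bP_{s^\pm}(x) \le \bP_s(x) \le \bP_{s^+}(x) \vee \bP_{s^-}(x)$ and simply asserts disjointness, whereas you cite the remark and write out the disjointness argument in $E^s$ explicitly.
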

	\begin{proof}
		That $B_{s^+} \cap B_{s^-} = \{0\}$ follows from the fact that $s^+ \wedge s^- = 0$. Let $x \in E^+$. Then
		\begin{align*} \bP_s(x) &= \sup_{n \in \bN} \left( x \wedge n \abs{s} \right) = \sup_{n \in \bN} \left( x \wedge n (s^+ \vee s^-) \right) \\&= \sup_{n \in \bN} \left( (x \wedge n s^+) \vee (x \wedge n s^-) \right). \end{align*}
		It follows that 
		\[\bP_{s^+}(x), \bP_{s^-}(x) \leq \bP_s(x) \leq \bP_{s^+} (x) \vee \bP_{s^-} (x) = \bP_{s^+} (x) + \bP_{s^-} (x).
		\]From the first inequality, we see that $B_{s^+}, B_{s^-} \subseteq B_s$ hence $B_{s^+} + B_{s^-} \subseteq B_s$, and from the second inequality, it follows that $B_s \subseteq B_{s^+} + B_{s^-}$. Therefore, $B_s = B_{s^+} + B_{s^-}$.
	\end{proof}
	
	As in the sup-completion, we are able to split an element of $E^{si}$ into its finite and infinite parts. Additionally, we now also have positive infinite and negative infinite parts of elements in $E^{si}$.
	
	\begin{definition}
		Let $s \in E^{si}$. We define the \emph{infinite}, \emph{positive infinite}, \emph{negative infinite}, and \emph{finite parts} of $s$, respectively, by
		\begin{enumerate}[(i)]
			\item $s_{\pm \infty} := \sup_{m \in \bN} \tfrac{1}{m} \inf_{n \in \bN} \tfrac{1}{n} s$,
			\item $s_{+ \infty} := (s_{\pm \infty})^+$,
			\item $s_{- \infty} := - (s_{\pm \infty})^-$, and
			\item $s_\cF := \sup \{ \inf \{x - x \wedge s_{+\infty} - (y - y \wedge(- s_{-\infty})) \colon y \in [s^-]^\leq_+ \} \colon x \in [s^+]^\leq_+ \}$.
		\end{enumerate}
	\end{definition}
	
	\begin{proposition} \label{p:finite_infinite_parts_formulas}
		The following identities hold for $s \in E^{si}$.
		\begin{enumerate}[(i)]
			\item $s_{+\infty} = \inf_{n \in \bN} \tfrac{1}{n} s^+$,
			\item $s_{-\infty} = - \inf_{n \in \bN} \tfrac{1}{n} s^-$, and
			\item $\abs{s_{\pm \infty}} = \inf_{n \in \bN} \tfrac{1}{n} \abs{s} = \abs{s}_{+\infty}$
		\end{enumerate}
	\end{proposition}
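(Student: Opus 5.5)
The plan is to reduce everything to pointwise statements through the representation $E^{si} \cong C(K,\overline{\bR})$ established above. That isomorphism $\varphi$ is a lattice star isomorphism, so it preserves scalar multiplication by all reals, finite suprema and infima, and the maps $x \mapsto x^\pm$ and $x \mapsto \abs{x}$. Being an order isomorphism, it also preserves arbitrary suprema and infima whenever they exist. It therefore suffices to prove (i)--(iii) for $f \in C(K,\overline{\bR})$, where suprema and infima are computed in $C(K,\overline{\bR})$. On the extremally disconnected space $K$, such a supremum or infimum agrees with the pointwise one on a dense (comeagre) set, and is obtained from it by taking the upper or lower semicontinuous regularisation.

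The first step is to compute $f_{\pm\infty}$. Put $g := \inf_n \tfrac1n f$. Pointwise, $\tfrac1n f(\omega)$ decreases to $+\infty$ where $f(\omega)=+\infty$, to $0$ where $f(\omega)$ is finite, and to $-\infty$ where $f(\omega)=-\infty$. The pointwise infimum is therefore $+\infty\cdot\mathbf 1_{\{f=+\infty\}} - \infty\cdot\mathbf 1_{\{f=-\infty\}}$, with value $0$ elsewhere. The set $\{f=-\infty\}$ is closed, and the lattice infimum $g$ is the largest continuous function below this pointwise infimum. I expect $g = +\infty$ on $\operatorname{int}\{f=+\infty\}$, $g = -\infty$ on $\overline{\{f=-\infty\}}$, and $g = 0$ elsewhere. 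Applying $\sup_m \tfrac1m$ then leaves these values unchanged, so $f_{\pm\infty}$ is this three-valued function.

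Next I prove (i). By the same computation applied to $f^+$, the function $\inf_n \tfrac1n f^+$ equals $+\infty$ on $\operatorname{int}\{f=+\infty\}$ and $0$ elsewhere, and this is exactly $(f_{\pm\infty})^+$. Identity (ii) follows by applying (i) to $-s$, using that $(-s)^+=s^-$, that $(-s)_{\pm\infty} = -s_{\pm\infty}$ (since scalar multiplication by $-1$ swaps $\inf$ and $\sup$ and commutes with positive scalars), and that $-(t^-) = -((-t)^+)$. For (iii), I write $\abs{s_{\pm\infty}} = s_{+\infty} \vee (-s_{-\infty})$. By (i) and (ii) this equals $(\inf_n \tfrac1n s^+) \vee (\inf_n \tfrac1n s^-)$, and \Cref{l:supremum_order_continuous} turns it into $\inf_n \tfrac1n (s^+\vee s^-) = \inf_n\tfrac1n\abs{s}$. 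Finally, applying (i) to $\abs{s}\ge 0$ gives $\abs{s}_{+\infty}=\inf_n\tfrac1n\abs{s}^+=\inf_n\tfrac1n\abs{s}$. This argument also gives an abstract route to (iii) that needs no representation, provided (i) and (ii) are known.

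The main obstacle is (i), because it requires handling the outer $\sup_m$ in the definition together with lattice suprema and infima in $C(K,\overline{\bR})$, which are not pointwise. An abstract alternative for (i) runs as follows. Using $\lambda(a\vee b)=\lambda a\vee\lambda b$ and infinite distributivity, I would get $(\inf_n \tfrac1n s)\vee 0 = \inf_n(\tfrac1n s\vee 0) = \inf_n \tfrac1n s^+$. Here the step $\inf_n (a_n \vee 0) = (\inf_n a_n)\vee 0$ is exactly the infinite distributivity property. This shows $(\inf_n\tfrac1n s)^+ = t := \inf_n\tfrac1n s^+$. Next, $t$ satisfies $\tfrac1m t = t$, because $\tfrac1m t = \inf_n \tfrac1{mn}s^+$ is an infimum over a cofinal subsequence. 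Hence $\sup_m \tfrac1m \inf_n \tfrac1n s$ has positive part $\sup_m \tfrac1m t = t$, where passing the $\sup_m$ inside the positive part uses distributivity once more. I would try this abstract route first and keep the $C(K,\overline{\bR})$ computation as a check.
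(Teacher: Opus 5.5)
Your abstract route for (i) is the paper's proof: infinite distributivity gives $(\inf_n\tfrac1n s)\vee 0=\inf_n\tfrac1n s^+=:t$, and then $\tfrac1m t=t$. Your (iii) is exactly the paper's argument via \Cref{l:supremum_order_continuous}, and you correctly obtain $\abs{s}_{+\infty}=\inf_n\tfrac1n\abs{s}$ as (i) applied to $\abs{s}$. One small correction: commuting $\sup_m$ with $\vee 0$ is just associativity of suprema, so distributivity is only needed at the inner infimum.

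The gap is in (ii). The reason you give for $(-s)_{\pm\infty}=-s_{\pm\infty}$ only produces
\[ (-s)_{\pm\infty}=\sup_m\tfrac1m\inf_n\tfrac1n(-s)=-\inf_m\tfrac1m\sup_n\tfrac1n s, \]
in which supremum and infimum occur in the opposite order from $s_{\pm\infty}=\sup_m\tfrac1m\inf_n\tfrac1n s$. That the two iterated expressions agree is a genuine interchange of limits, not a formal consequence of how $-1$ acts. It is true: in the model $E^s_+\Delta E^s_+$, with $s=(a,b)$, both equal $(\inf_n\tfrac1n a,\inf_n\tfrac1n b)$. But it needs its own proof, and you supply none.

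The definition is deliberately asymmetric. The inner infimum removes finite positive values and keeps $+\infty$; the outer supremum removes finite negative values and keeps $-\infty$. The repair is to prove (ii) directly, as the paper's ``similar'' indicates, now using infinite distributivity at the outer supremum:
\[ s_{-\infty}=s_{\pm\infty}\wedge 0=\sup_m\tfrac1m\bigl((\inf_n\tfrac1n s)\wedge 0\bigr)=\sup_m\tfrac1m\inf_n(-\tfrac1n s^-)=\sup_m(-\tfrac1m s^-)=-\inf_m\tfrac1m s^-. \]
Here $\inf_n(-\tfrac1n s^-)=-s^-$ because $\tfrac1n s^-\le s^-$.

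The same asymmetry breaks your $C(K,\overline{\bR})$ computation. If $-\infty<f(\omega)<0$, then $\tfrac1n f(\omega)$ increases to $0$, so the pointwise infimum is $f(\omega)$, not $0$. In fact $\inf_n\tfrac1n f$ equals $+\infty$ on $\operatorname{int}\{f=+\infty\}$ and $f\wedge 0$ elsewhere. It is precisely the outer $\sup_m\tfrac1m$ that sends finite negative values to $0$, contrary to your claim that it changes nothing.

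The correct outcome is that $f_{\pm\infty}$ is $+\infty$ on $\operatorname{int}\{f=+\infty\}$, $-\infty$ on $\operatorname{int}\{f=-\infty\}$, and $0$ elsewhere. Your candidate, which is $-\infty$ on the closed set $\{f=-\infty\}$, is in general not even continuous. For example, take $K=\beta\bN$ and $f$ the continuous extension of $n\mapsto -n$; then $\{f=-\infty\}=\beta\bN\setminus\bN$ has empty interior. Only positive parts enter (i), so that identity survives. Used as a check for (ii), however, this computation would give the wrong answer.
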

	\begin{proof}
		To prove (i), note that, by the infinite distributivity property,
		\begin{align*}
			s_{+\infty}
			& = \left( \sup_{m \in \bN} \tfrac{1}{m} \inf_{n \in \bN} \tfrac{1}{n} s \right) \vee 0 \\
			& = \sup_{m \in \bN} \tfrac{1}{m} \inf_{n \in \bN} \tfrac{1}{n} (s \vee 0) \\
			& = \inf_{n \in \bN} \tfrac{1}{n} s^+.
		\end{align*}
		The proof of (ii) is similar, and then (iii) follows by using \Cref{l:supremum_order_continuous} since
		\begin{align*}
			\abs{s_{\pm \infty}}
			& = (s_{+\infty}) \vee (-s_{-\infty}) \\
			& = \left( \inf_{n \in \bN} \tfrac{1}{n} s^+ \right) \vee \left( \inf_{n \in \bN} \tfrac{1}{n} s^- \right) \\
			& = \inf_{n \in \bN} \tfrac{1}{n} (s^+ \vee s^-) \\
			& = \inf_{n \in \bN} \tfrac{1}{n} \abs{s}.\qedhere
		\end{align*}
	\end{proof}
	
	The following proposition shows in what sense an element $s \in E^{si}$ is decomposable into its finite and infinite parts.
	
	\begin{proposition}
		Let $s \in E^{si}$. Then
		\begin{enumerate}[(i)]
			\item $B_s = B_{s_{\pm \infty}} \oplus B_{s_{\cF}}$,
			\item $\bP_{s_{\pm \infty}}(s) = s_{\pm \infty}$ and $\bP_{s_\cF}(s) = s_\cF$,
			\item $\abs{s_{\pm \infty}}$ is the largest element of $B_{s_{\pm \infty}}^{si}$,
			\item $s_{+\infty}$ is the largest element of $B_{s_{+\infty}}^{si}$, and
			\item $-s_{-\infty}$ is the largest element of $B_{s_{-\infty}}^{si}$.
		\end{enumerate}
	\end{proposition}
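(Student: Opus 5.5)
The cleanest route is to pass to the representation $E^{si}\cong C(K,\overline{\bR})$ from the representation theorem above. Under this lattice star isomorphism, bands correspond to $C(K_U,\overline{\bR})$ for clopen $U\subseteq K$, band projections act as multiplication by $\mathbf 1_U$, and suprema and infima are the usual ones in $C(K,\overline{\bR})$: taken pointwise on a dense set and then extended by continuity. I would first check that the extended projection $\bP$ of Remark~\ref{r:projection_extensions} corresponds to $f\mapsto \mathbf 1_U f$. This follows from \Cref{p:properties projections sup-inf}(v) and the representation formula, since both sides agree on $E$ and are built by the same sup-inf formula. For $0\le u\in E^s$, the band $B_u$ corresponds to $U_u:=\overline{\{\omega: u(\omega)>0\}}$, which is clopen because $K$ is extremally disconnected.

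Next I compute the parts pointwise. By \Cref{p:finite_infinite_parts_formulas}(i), $s_{+\infty}=\inf_n \tfrac1n s^+$. For $f$ representing $s$, $\tfrac1n f^+$ converges down pointwise to the function equal to $+\infty$ on $\{f=+\infty\}$ and $0$ elsewhere. The infimum in $C(K,\overline{\bR})$ is the continuous regularization of this limit. Since $\{f=+\infty\}$ is closed, it equals $+\infty\cdot\mathbf 1_{V_+}$, where $V_+$ is the interior of $\{f=+\infty\}$, and $V_+$ is clopen because closures of open sets are clopen. In the same way $s_{-\infty}=-\infty\cdot\mathbf 1_{V_-}$ with $V_-$ the interior of $\{f=-\infty\}$, and $s_{\pm\infty}=+\infty\mathbf 1_{V_+}-\infty \mathbf 1_{V_-}$.

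With these formulas, (iii)--(v) follow immediately. The largest element of $C(K_{V_+},\overline{\bR})$ is $+\infty\mathbf 1_{V_+}=s_{+\infty}$. Likewise $-s_{-\infty}=+\infty\mathbf 1_{V_-}$, and $\abs{s_{\pm\infty}}=+\infty\mathbf 1_{V_+\cup V_-}$, where $V_+\cup V_-$ is the support of $s_{\pm\infty}$. For (ii), $\mathbf 1_{V_+\cup V_-}f=s_{\pm\infty}$ because $f=\pm\infty$ on $V_\pm$.

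The main obstacle is identifying $s_\cF$, whose definition uses the nested sup-inf with the terms $x-x\wedge s_{+\infty}$ and $y-y\wedge(-s_{-\infty})$. For $x\in E^+$, the element $x\wedge s_{+\infty}$ is $\bP_{V_+}x$, because $x\wedge(+\infty\mathbf 1_{V_+})=\mathbf 1_{V_+}x$. Hence $x-x\wedge s_{+\infty}=\bP^d_{V_+}x$, and similarly $y-y\wedge(-s_{-\infty})=\bP^d_{V_-}y$. The inner expression becomes $\bP_W(x-y)$ with $W=K\setminus(V_+\cup V_-)$; here I use that $x\in[s^+]^\le_+$ lives off $V_-$, so $\bP^d_{V_+}x=\bP_W x$, and symmetrically for $y$. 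By Remark~\ref{r:projection_extensions} this gives $s_\cF=\bP_W(s)=\mathbf 1_W f$. So $\bP_{s_\cF}(s)=s_\cF$ holds once we know the support of $\mathbf 1_W f$ has $\mathbf 1_{\mathrm{supp}}f=\mathbf 1_Wf$, which is clear, and this completes (ii).

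For (i), the supports $V_+\cup V_-$ and $\overline{\{\mathbf 1_Wf\neq0\}}\subseteq W$ are disjoint. Their union equals the support of $f$, because $\{f\ne0\}\subseteq (V_+\cup V_-)\cup(W\cap\{f\neq0\})$. This gives $B_s=B_{s_{\pm\infty}}\oplus B_{s_\cF}$, using the band sum observation from Remark~\ref{r:projection_extensions} and the preceding proposition. The delicate points are the infimum computation and the interaction of $x\wedge s_{+\infty}$ with $V_-$; I would verify these carefully pointwise on the dense set where everything is continuous.
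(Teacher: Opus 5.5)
Your proof is correct, but it takes a genuinely different route from the paper.

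\textbf{The paper's route.} The paper stays inside $E^{si}$. It sets $u:=s_{\pm\infty}$, uses $nu=u$ to get $\bP_u(x)=x\wedge\abs{u}$ for $x\in E^+$, and computes $\bP_u(s^+)=s^+\wedge\abs{u}=\inf_n\tfrac1n s^+=s_{+\infty}$ by infinite distributivity (similarly for $s^-$). It then reassembles $\bP_u(s)=s_{\pm\infty}$ through Proposition~\ref{p:properties projections sup-inf}(v), and reads off (iii)--(v) from $\bP_u(x)=x\wedge w$. For (i) it passes to $t=\abs{s}$ and cites the sup-completion result \cite[Theorem~5.14]{differentiation}.

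\textbf{Your route.} You transport everything to $C(K,\overline{\bR})$, so the proposition becomes bookkeeping with the clopen sets $V_+$, $V_-$ and $W$. Your step identifying $s_\cF=\bP_W(s)=\mathbf 1_W f$ is never made explicit in the paper, and it is what you gain from this approach. It gives the second half of (ii) and the decomposition in (i) directly, with no appeal to the external sup-completion theorem. By contrast, the paper only asserts $t_\cF=\abs{s_\cF}$ and does not separately verify $\bP_{s_\cF}(s)=s_\cF$.

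\textbf{What your route costs.} It relies on the representation theorem together with standard facts about extremally disconnected $K$, which you should state explicitly:
\begin{enumerate}[(a)]
\item Bands of $E$ correspond to clopen subsets of $K$.
\item For clopen $U$, multiplication by $\mathbf 1_U$ commutes with arbitrary suprema and infima in $C(K,\overline{\bR})$, since $C(K,\overline{\bR})\cong C(U,\overline{\bR})\times C(K\setminus U,\overline{\bR})$. This, rather than mere agreement on $E$, is what makes the sup-inf extension of $\bP$ in Remark~\ref{r:projection_extensions} coincide with $f\mapsto \mathbf 1_U f$.
\item The interior $V_+$ of the closed set $\{f=+\infty\}$ is clopen. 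Its closure is open and contained in $\{f=+\infty\}$, hence contained in $V_+$.
\end{enumerate}
With these recorded, your argument is complete and arguably more transparent than the paper's.
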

	\begin{proof}
		Let $t := \abs{s}$. Then $t_{+\infty} = \abs{s_{\pm \infty}}$ and $t_\cF = \abs{s_\cF}$ by \Cref{p:finite_infinite_parts_formulas}, so that $B_s = B_t$, $B_{s_{\pm \infty}} = B_{t_{\pm \infty}}$ and $B_{s_\cF} = B_{t_\cF}$. Now (i) follows from \cite[Theorem~5.14]{differentiation}.
		
		Let $u := s_{\pm \infty}$. It follows from the definition of $s_{\pm \infty}$ that $nu = u$ for all $n \in \bN$. Then, using \Cref{p:finite_infinite_parts_formulas},
		\begin{align*}
			\bP_u(s^+)
			& = \sup \{ \bP_u(x) \colon x \in [s^+]^\leq_+ \} = \sup \{ x \wedge \abs{u} \colon x \in [s^+]^\leq_+ \}  = s^+ \wedge \abs{u} \\
			& = s^+ \wedge \left( \inf_{n \in \bN} \tfrac{1}{n} \abs{s} \right)  = \inf_{n \in \bN} ((\tfrac{1}{n} \abs{s}) \wedge s^+) = \inf_{n \in \bN} (\tfrac{1}{n}(s^+ \vee s^-) \wedge s^+) \\
			& = \inf_{n \in \bN} ((\tfrac{1}{n}s^+ \wedge s^+) \vee (\tfrac{1}{n} s^- \wedge s^+))  = \inf_{n \in \bN} \tfrac{1}{n} s^+ \\
			& = s_{+ \infty}.
		\end{align*}
		Similarly, $\bP_u(s^-) = -s_{-\infty}$. Therefore,
		\begin{align*}
			\bP_u(s)
			& = \sup \{ \inf \{ x - y \colon y \in [\bP_u(s^-)]^\leq_+ \} \colon x \in [\bP_u(s^+)]^\leq_+ \} \\
			& = \sup \{ \inf \{ x - y \colon y \in [- s_{-\infty}]^\leq_+ \} \colon x \in [s_{+\infty}]^\leq_+ \} \\
			& = s_{\pm \infty}.
		\end{align*}
		
		To prove (iii), let $w := \abs{s_{\pm \infty}}$. Then $nw = w$ for all $n \in \bN$, and, for every $x \in E^+$, $\bP_u(x) = \sup_{n \in \bN} (x \wedge n w) = x \wedge w$. It follows that $w$ is the largest element of $B_u$. (iv) and (v) follow from (iii).
	\end{proof}
	
	In a vector lattice, we write $x \ll y$ to mean $y - x$ is a weak order unit. We extend this notation to the sup-inf-completion, but the definition must be altered to account for the fact that we do not have addition defined on all of $E^{si}$.
	
	\begin{definition} \label{d:ll}
		Let $s, t \in E^{si}$, and let $B$ be a band in $E$ with band projection $\bP$. We write $s \ll_B t$ to mean $\bP(s_{+\infty}) = \bP(t_{_-\infty}) = 0$ and $\bP \bP_{s_{-\infty}}^d \bP_{t_{+\infty}}^d (t_\cF - s_\cF)$ is a weak order unit of $(B \cap B_{s_{-\infty}}^d \cap B_{t_{+\infty}}^d)^u$. We write $s \ll t$ instead of $s \ll_E t$.
	\end{definition}
	
	Note that if $s, t \in E$, then the definition of $s \ll_B t$ reduces to simply being $\bP(t - s)$ is a weak order unit of $B$. We provide some basic properties of the relation $\ll$ in $E^{si}$.
	
	\begin{proposition} \label{p:properties_of_ll}
		Let $s, t, u \in E^{si}$, and suppose $B_1$ and $B_2$ are bands in $E$ with $B_1 \subseteq B_2$. The following holds.
		\begin{enumerate}[(i)]
			\item If $s \ll t$, then $s \leq t$,
			\item if $s \ll t$ and $u \leq s$, then $u \ll t$,
			\item if $s \ll t$ and $t \leq u$, then $s \ll u$,
			\item if $s \ll t$ and $u \ll t$, then $s \vee u \ll t$, and
			\item if $s \ll_{B_2} t$, then $s \ll_{B_1} t$.
		\end{enumerate}
	\end{proposition}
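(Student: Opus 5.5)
The cleanest route is to pass through the representation $E^{si}\cong C(K,\overline{\bR})$ and read $\ll_B$ pointwise. Under this identification a band $B$ corresponds to a clopen set $K_B\subseteq K$. The band $B_{s_{+\infty}}$ corresponds to the closure of the open set $\{s=+\infty\}$, and $B_{s_{-\infty}}$ to the closure of $\{s=-\infty\}$. I expect the finite part $s_\cF$ to agree with $s$ off these closures. Also, $x\in (C)^u$ is a weak order unit exactly when $\{x\neq0\}$ is dense in $K_C$.

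With these translations, $s\ll_B t$ should amount to three conditions:
\begin{enumerate}[(a)]
\item $s<+\infty$ on a dense subset of $K_B$;
\item $t>-\infty$ on a dense subset of $K_B$;
\item on the clopen set $K_B\setminus(\overline{\{s=-\infty\}}\cup\overline{\{t=+\infty\}})$, the function $t-s$, which is finite and well defined on a dense open set, is $>0$ on a dense subset.
\end{enumerate}
Equivalently, $s(\omega)<t(\omega)$ for all $\omega$ in a dense open subset of $K_B$, with the usual order on $\overline{\bR}$. The first step is to verify this equivalence carefully. This is the main obstacle: it requires identifying the extended projections $\bP$, $\bP^d_{s_{-\infty}}$ and $\bP^d_{t_{+\infty}}$ with multiplication by indicators of clopen sets. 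One also has to check that $t_\cF-s_\cF$ is a legitimate element of the relevant universal completion, namely that its finiteness set is dense in the complement of the infinite bands. For the projections I would use the formula in \Cref{p:properties projections sup-inf}(v) together with the description of the projections $\bP_u$ in \Cref{r:projection_extensions}.

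With this characterisation the five items become short topological checks.

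For (i), suppose $s\le t$ fails. Then by continuity there is a nonempty open set on which $s>t$. This contradicts the density of $\{s<t\}$ in $K$, since a dense set meets every nonempty open set. Hence $s\le t$.

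For (ii), $u\le s<t$ holds wherever $s<t$, so $\{u<t\}\supseteq\{s<t\}$ contains a dense open set. Part (iii) is symmetric.

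For (iv), the intersection of two dense open sets is dense open. On that intersection $s\vee u<t$, because $(s\vee u)(\omega)=\max(s(\omega),u(\omega))$ in $C(K,\overline{\bR})$.

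For (v), a set that is dense and open in $K_{B_2}$ meets $K_{B_1}$ in a set that is dense and open in $K_{B_1}$, because $K_{B_1}$ is clopen.

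As an alternative that avoids the representation, one could argue directly with band projections. Part (v) would then follow from $\bP_1=\bP_1\bP_2$ together with the fact that projecting a weak order unit of a band yields a weak order unit of any subband. However, (iv) becomes messy in that approach, because it requires combining the bands $B_{s_{-\infty}}$ and $B_{u_{-\infty}}$ via \Cref{l:adding_projections}. The pointwise picture is therefore the plan I would follow.
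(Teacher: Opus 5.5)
Your proposal is correct, but it takes a different route from the paper. The paper stays inside the band-projection calculus throughout:
\begin{itemize}
\item For (i) it splits $E=B_{s_{-\infty}}\oplus(B^d_{s_{-\infty}}\cap B^d_{t_{+\infty}})\oplus(B^d_{s_{-\infty}}\cap B_{t_{+\infty}})$, checks $\bP(s)\le\bP(t)$ on each piece, and glues the pieces with \Cref{l:adding_bands_inequalities}.
\item For (ii) and (iii) it uses $u_{+\infty}\le s_{+\infty}$ and $B^d_{u_{-\infty}}\subseteq B^d_{s_{-\infty}}$, together with the fact that anything dominating a weak order unit is one.
\item For (iv) it shows $B_{(s\vee u)_{-\infty}}=B_{s_{-\infty}}\cap B_{u_{-\infty}}$, so the relevant band becomes $(B^d_{s_{-\infty}}+B^d_{u_{-\infty}})\cap B^d_{t_{+\infty}}$. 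It then glues the two weak order units with \cite[Theorem 32.5]{babyzaanen}.
\item Item (v) is read off from the definition.
\end{itemize}
You instead put all the work into one lemma. Under $E^{si}\cong C(K,\overline{\bR})$, you claim $s\ll_B t$ holds exactly when $s<t$ on a dense open subset of the clopen set $K_B$. After that, each item is a one-line topological fact. In particular, (iv), where the paper does real band bookkeeping, reduces to the fact that two dense open sets meet in a dense open set. Your characterization is true, so the plan works. The price is that you need the representation theorem, the identification of extended projections with multiplication by $1_{K_B}$, and a correct computation of $s_{\pm\infty}$ and $s_\cF$ in $C(K,\overline{\bR})$. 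The paper's argument, by contrast, is intrinsic and uses only tools already developed in Section 5.

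When you prove that lemma, fix two slips in your translation.
\begin{itemize}
\item The set $\{s=+\infty\}$ is closed, not open, and infima in $C(K,\overline{\bR})$ are not computed pointwise. The band $B_{s_{+\infty}}$ therefore corresponds to the clopen set $\overline{\mathrm{int}\{s=+\infty\}}$. For example, take $E=\ell^\infty$, $K=\beta\mathbb{N}$ and $s(n)=n$. Then $\{s=+\infty\}=\beta\mathbb{N}\setminus\mathbb{N}$, but $s_{+\infty}=0$.
\item Accordingly, the clopen set in (c) is $K_B\setminus(\overline{\mathrm{int}\{s=-\infty\}}\cup\overline{\mathrm{int}\{t=+\infty\}})$. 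Your set differs from it only by a nowhere dense set, so condition (c) is unaffected. Conditions (a) and (b) are correct as stated, since $\bP(s_{+\infty})=0$ exactly when $\{s=+\infty\}$ has empty interior in $K_B$.
\item Weak order units are positive by definition. The correct criterion in $(C)^u$ is therefore that $\{x>0\}$ is dense in $K_C$, not merely $\{x\neq 0\}$. Your condition (c) already requires $t-s>0$, so the final characterization ``$s<t$ on a dense open subset of $K_B$'' stands.
\end{itemize}
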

	\begin{proof}
		Suppose $s \ll t$. Then $s_{+\infty} = t_{-\infty} = 0$ and $\bP_{s_{-\infty}}^d \bP_{t_{+\infty}}^d (t_\cF - s_\cF)$ is a weak order unit of $(B_{s_{-\infty}}^d)^u \cap (B_{t_{+\infty}}^d)^u$. To prove (i), observe that
		\[ \bP_{s_{-\infty}}(s) = s_{-\infty} \leq \bP_{s_{-\infty}}(t), \]
		and
		\[ \bP_{s_{-\infty}}^d \bP_{t_{+\infty}}^d(s) = \bP_{s_{-\infty}}^d \bP_{t_{+\infty}}^d (s_\cF) \leq \bP_{s_{-\infty}}^d \bP_{t_{+\infty}}^d (t_\cF) = \bP_{s_{-\infty}}^d \bP_{t_{+\infty}}^d (t), \]
		and also
		\[ \bP_{s_{-\infty}}^d \bP_{t_{+\infty}} (s) \leq \bP_{s_{-\infty}}^d \bP_{t_{+\infty}} (t_{+\infty}) = \bP_{s_{-\infty}}^d \bP_{t_{+\infty}} (t). \]
		Since $E = B_{s_{-\infty}} \oplus B_{s_{-\infty}}^d \cap B_{t_{+\infty}}^d \oplus B_{s_{-\infty}}^d \cap B_{t_{+\infty}}$, it follows from \Cref{l:adding_bands_inequalities} that $s \leq t$. This completes the proof of (i). If $u \leq s$, then $0 \leq u_{+\infty} \leq s_{+\infty} = 0$,
		\[ \bP_{u_{-\infty}}^d (u_\cF) = \bP_{u_{-\infty}}^d (u) \leq \bP_{u_{-\infty}}^d (s) = \bP_{u_{-\infty}}^d (s_\cF) \]
		where the final equality holds because $B_{u_{-\infty}}^d \subseteq B_{s_{-\infty}}^d$, and hence since any element dominating a weak order unit is itself a weak order unit and
		\[ \bP_{u_{-\infty}}^d \bP_{t_{+\infty}}^d (t_\cF - u_\cF) \geq \bP_{u_{-\infty}}^d \bP_{t_{+\infty}}^d (t_\cF - s_\cF), \]
		we have that $\bP_{u_{-\infty}}^d \bP_{t_{+\infty}}^d (t_\cF - u_\cF)$ is a weak order unit of $(B_{u_{-\infty}}^d \cap B_{t_{+\infty}}^d)^u$. This proves (ii), and the proof of (iii) is similar.
		Suppose that $u \ll t$. Then $u_{+\infty} = 0$ and $\bP_{u_{-\infty}}^d \bP_{t_{+\infty}}^d (t_\cF - u_\cF)$ is a weak order unit of $(B_{u_{-\infty}}^d \cap B_{t_{+\infty}}^d)^u$. Note that $(s \vee u)_{+\infty} = s_{+\infty} \vee u_{+\infty} = 0$ by the infinite distributivity property. Since $(s \vee u)_{-\infty} = s_{-\infty} \vee u_{-\infty} \leq 0$, we have that $B_{(s \vee u)_{-\infty}} = B_{s_{-\infty}} \cap B_{u_{-\infty}}$ by \Cref{r:projection_extensions}, and hence $B_{(s \vee u)_{-\infty}}^d = B_{s_{-\infty}}^d + B_{u_{-\infty}}^d$.
		
		Since $(s \vee u)_{+\infty} = 0$, we have that $s \vee u \in (E^u)^i$. Now,
		\[\bP_{s_{-\infty}}^d(s \vee u) \geq \bP_{s_{-\infty}}^d(s)  = s_\cF, \]
		and hence $\bP_{s_{-\infty}}^d(s \vee u) \in E^u$. Thus,
		\[ \bP_{s_{-\infty}}^d((s \vee u)_\cF) = \bP_{s_{-\infty}}^d((s \vee u)) \geq \bP_{s_{-\infty}}^d (s_\cF), \]
		and hence $\bP_{s_{-\infty}}^d \bP_{t_{+\infty}}^d (t_\cF - (s \vee u)_\cF) $ is a weak order unit of $(B_{s_{-\infty}}^d \cap B_{t_{+\infty}}^d)^u$. Similarly, $\bP_{u_{-\infty}}^d \bP_{t_{+\infty}}^d (t_\cF - (s \vee u)_\cF) $ is a weak order unit of $(B_{u_{-\infty}}^d \cap B_{t_{+\infty}}^d)^u$. Therefore,  by \cite[Theorem 32.5]{babyzaanen}, $\bP_{(s \vee u)_{-\infty}}^d \bP_{t_{+\infty}}^d (t_\cF - (s \vee u)_\cF) $ is a weak order unit of $(B_{(s \vee u)_{-\infty}}^d \cap B_{t_{+\infty}}^d)^u$. We have shown that $s \vee u \ll t$, completing the proof of (iv). Statement (v) is an immediate consequence of \Cref{d:ll}.
	\end{proof}
	
	The definition of a weak order unit can be extended to elements of $E^{si}$. For $s \in E$, $0 \ll s$ means that $s$ is a weak order unit by definition, and in \Cref{p:wou_gg_zero} we show that this is still true for $s \in E^{si}$.
	
	\begin{definition}
		Let $s \in E^{si}$, and let $B$ be a band in $E$. We call $s$ a \emph{weak order unit} of $B$ if $s \geq 0$ and $B_s = B$. 
	\end{definition}
	
	\begin{proposition} \label{p:wou_gg_zero}
		Let $s \in E^{si}$. Then $s$ is a weak order unit of $E$ if and only if $s \gg 0$.
	\end{proposition}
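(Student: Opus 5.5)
We unwind \Cref{d:ll} with $s = 0$ and $t = s$, and compare the result with the definition of a weak order unit. For the element $0$ we have $0_{+\infty} = 0_{-\infty} = 0$ and $0_\cF = 0$, so $\bP_{0_{-\infty}}^d$ is the identity. Hence $0 \ll s$ means exactly that $s_{-\infty} = 0$ and that $\bP_{s_{+\infty}}^d(s_\cF)$ is a weak order unit of $(B_{s_{+\infty}}^d)^u$. The plan is to show that this pair of conditions is equivalent to $s \geq 0$ together with $B_s = E$.

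For the forward direction, assume $0 \ll s$. By \Cref{p:properties_of_ll}(i) we get $0 \leq s$, so $s = s^+ \in E^s$ and $s_{-\infty} = 0$. The decomposition proposition gives $B_s = B_{s_{\pm\infty}} \oplus B_{s_\cF}$, and here $s_{\pm\infty} = s_{+\infty}$. It therefore suffices to show $B_{s_{+\infty}}^d \subseteq B_{s_\cF}$. Since $s_\cF$ is disjoint from $s_{+\infty}$ (again by the decomposition), we have $\bP_{s_{+\infty}}^d(s_\cF) = s_\cF$. This element is a weak order unit of $(B_{s_{+\infty}}^d)^u$, so its band in $E$ is all of $B_{s_{+\infty}}^d$, using the one-to-one correspondence of bands between $E$ and $E^u$. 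Thus $B_s = B_{s_{+\infty}} \oplus B_{s_{+\infty}}^d = E$.

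For the converse, let $s \geq 0$ with $B_s = E$. Then $s^- = 0$, so $s_{-\infty} = -\inf_n \tfrac1n s^- = 0$ by \Cref{p:finite_infinite_parts_formulas}(ii). Again $\bP^d_{s_{+\infty}}(s_\cF) = s_\cF$, and $s_\cF \in E^u$ by construction of the finite part. The decomposition $E = B_s = B_{s_{+\infty}} \oplus B_{s_\cF}$ forces $B_{s_\cF} = B_{s_{+\infty}}^d$, so $s_\cF$ is a weak order unit of $(B_{s_{+\infty}}^d)^u$. This is exactly the condition $0 \ll s$.

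The main obstacle is the bookkeeping in identifying $\bP^d_{s_{+\infty}}(s_\cF)$ with $s_\cF$ and checking that $s_\cF \geq 0$ lies in $E^u$. Both should follow from the definition of $s_\cF$ when $s \geq 0$: the $y$-terms vanish, so $s_\cF = \sup\{x - x\wedge s_{+\infty} \colon x \in [s]^\leq_+\}$. From this formula we would read off that $s_\cF$ is positive, disjoint from $s_{+\infty}$, and finite, the last point invoking \cite[Theorem~5.14]{differentiation} as in the decomposition proof.
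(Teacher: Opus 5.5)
Your proof is correct and takes essentially the same route as the paper: you unwind \Cref{d:ll} for $0 \ll s$ and split $E$ into $B_{s_{+\infty}}$ and $B_{s_{+\infty}}^d$, where $s$ restricts to $s_{+\infty}$ and to $s_\cF$ respectively. The paper's proof is terser. It notes that $s_{-\infty}=0$ already follows from the weak-order-unit condition and leaves the positivity of $s$ and the equality $B_s=E$ implicit, whereas you obtain $s\ge 0$ from \Cref{p:properties_of_ll}(i) and compute $B_s$ explicitly via $B_s = B_{s_{\pm\infty}}\oplus B_{s_\cF}$.
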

	\begin{proof}
		By definition, $s \gg 0$ if and only if $s_{-\infty} = 0$ and $\bP_{s_{+\infty}}^d (s_\cF)$ is a weak order unit of $B_{s_{+\infty}}^d$. However, $\bP_{s_{+\infty}}^d (s_\cF)$ being a weak order unit of $B_{s_{+\infty}}^d$ implies that $\bP_{s_{-\infty}}(s_\cF)$ is a weak order unit of $B_{s_{-\infty}} \subseteq B_{s_{+\infty}}^d$ and hence $s_{-\infty} = 0$ since $\bP_{s_{-\infty}}(s_\cF) = 0$. Thus, we have the equivalence $s \gg 0$ if and only if $\bP_{s_{+\infty}}^d (s_\cF)$ is a weak order unit of $B_{s_{+\infty}}^d$. Since $s_{+\infty}$ is a weak order unit of $B_{s_{+\infty}}$, we have that $s \gg 0$ is also equivalent to $s$ being a weak order unit of $E$.
	\end{proof}
	
	For $x,y \in E$, we have the decomposition $E = B_{x < y} \oplus B_{x > y} \oplus B_{x = y}$. We define the necessary bands in order to obtain analogous band decompositions for elements of $E^{si}$. 
	
	\begin{definition}
		Let $s, t \in E^{si}$. We define
		\begin{align*}
			B_{s \leq t} & := B_{t_{+ \infty}} + B_{s_{- \infty}} + B_{s_{\pm \infty}}^d \cap B_{t_{\pm \infty}}^d \cap B_{s_\cF \leq t_\cF}, \\
			B_{s = t} & := B_{s \leq t} \cap B_{s \geq t}, \text{ and} \\
			B_{s < t} & := B_{t_{+\infty}} \cap B_{s_{+\infty}}^d + B_{s_{-\infty}} \cap B_{t_{-\infty}}^d + B_{t_{\pm \infty}}^d \cap B_{s_{\pm \infty}}^d \cap B_{s_{\cF} < t_{\cF}}.
		\end{align*}
	\end{definition}
	
	\begin{proposition}
		Let $s, t \in E^{si}$. Then
		\[ E = B_{s \leq t} \oplus B_{t < s} = B_{s = t} \oplus B_{s < t} \oplus B_{t < s}. \]
	\end{proposition}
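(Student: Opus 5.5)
The plan is to reduce everything to the decomposition of $E$ into the four bands
\[
E = B_{s_{+\infty}} \oplus B_{s_{-\infty}} \oplus B_{s_\cF}' \qquad \text{and similarly for } t,
\]
where $B_{s_\cF}' := B_{s_{\pm\infty}}^d$. By Proposition~\ref{p:finite_infinite_parts_formulas} together with the identity $B_s=B_{s^+}\oplus B_{s^-}$ applied to $s_{\pm\infty}$, we have $B_{s_{\pm\infty}} = B_{s_{+\infty}} \oplus B_{s_{-\infty}}$, because $(s_{\pm\infty})^+ = s_{+\infty}$ and $(s_{\pm\infty})^- = -s_{-\infty}$. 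Hence $E = B_{s_{+\infty}} \oplus B_{s_{-\infty}} \oplus B_{s_{\pm\infty}}^d$. Intersecting this with the analogous decomposition for $t$ gives nine pairwise disjoint bands $C_{ij}$, which are the intersections of one piece from the $s$-decomposition and one from the $t$-decomposition, and $E$ is their direct sum.

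Two of these pieces are empty, namely $B_{s_{+\infty}}\cap B_{s_{-\infty}}$ inside the $s$-decomposition and its analogue for $t$; these intersections are already trivial.

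On the finite–finite piece $F := B_{s_{\pm\infty}}^d\cap B_{t_{\pm\infty}}^d$, the elements $s_\cF$ and $t_\cF$ live in $E^u$. Since $E^u$ has the projection property, the earlier proposition for vector lattices gives
\[
(F)^u = F\cap\bigl(B_{s_\cF\le t_\cF}\oplus B_{t_\cF<s_\cF}\bigr) = F\cap\bigl(B_{s_\cF<t_\cF}\oplus B_{t_\cF<s_\cF}\oplus B_{s_\cF=t_\cF}\bigr).
\]
These bands are bands of $E^u$, hence they correspond to bands of $E$.

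Next I determine, piece by piece, which of $B_{s\le t}$ and $B_{t<s}$ contains each remaining $C_{ij}$, directly from the definitions. Assigning $C_{ij}$ to $B_{s\le t}$ happens exactly when $C_{ij}$ lies in $B_{t_{+\infty}}$, or in $B_{s_{-\infty}}$, or equals $F$ intersected with $B_{s_\cF\le t_\cF}$.

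For $B_{t<s}$, the definition with the roles of $s$ and $t$ exchanged is
\[
B_{s_{+\infty}}\cap B_{t_{+\infty}}^d + B_{t_{-\infty}}\cap B_{s_{-\infty}}^d + F\cap B_{t_\cF<s_\cF}.
\]
Going through the nine pieces shows that each piece other than the two halves of $F$ lies in exactly one of $B_{s\le t}$ and $B_{t<s}$. For example, $B_{s_{+\infty}}\cap B_{t_{-\infty}}$ lies in $B_{t<s}$, and it is disjoint from $B_{t_{+\infty}}$ and from $B_{s_{-\infty}}$ because $s_{+\infty}\wedge (-s_{-\infty})=0$.

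The finite part $F$ is split by the vector-lattice decomposition above. Disjointness of $B_{s\le t}$ and $B_{t<s}$, and their summing to $E$, then follow because band sums distribute over this decomposition into pairwise disjoint pieces. One uses that a sum of some of the $C_{ij}$ intersected with a sum of others is the sum of the common pieces, which holds since band projections commute.

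The second equality follows in the same way. Written in terms of the pieces, $B_{s=t}$ is the intersection of $B_{s\le t}$ and $B_{t\le s}$, which consists of $B_{s_{+\infty}}\cap B_{t_{+\infty}}$, $B_{s_{-\infty}}\cap B_{t_{-\infty}}$, and $F\cap B_{s_\cF=t_\cF}$. Then $B_{s<t}$ and $B_{t<s}$ collect the remaining off-diagonal pieces together with the strict parts of $F$.

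The main obstacle is the bookkeeping around $B_{s\le t}$. The definition uses $B_{s_{\pm\infty}}^d\cap B_{t_{\pm\infty}}^d\cap B_{s_\cF\le t_\cF}$ with $s_\cF,t_\cF$ only in $E^u$ (or more generally in $E^{si}$). One must justify that $B_{s_\cF\le t_\cF}$ is meaningful, by applying Proposition~\ref{p:inequality_decomposition} in $E^u$ restricted to $F$, and that it interacts correctly with the infinite pieces. I would handle this by working throughout with the band projections $\bP_{s_{+\infty}}$, $\bP_{s_{-\infty}}$, $\bP_{t_{+\infty}}$, $\bP_{t_{-\infty}}$, which commute, and by checking the disjointness claims such as $B_{t_{+\infty}}\cap B_{t_{-\infty}}=\{0\}$ via $t_{+\infty}\wedge(-t_{-\infty})=0$.
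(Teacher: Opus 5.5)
Your proposal is correct and follows essentially the paper's argument. Both work in the Boolean algebra of bands: they use $E=B_{s_{+\infty}}\oplus B_{s_{-\infty}}\oplus B_{s_{\pm\infty}}^d$ (and likewise for $t$), and split $B_{s_{\pm\infty}}^d\cap B_{t_{\pm\infty}}^d$ by the vector-lattice decomposition $B_{s_\cF\le t_\cF}\oplus B_{t_\cF<s_\cF}$, or by its three-way version for the second equality. Your nine atoms $C_{ij}$ just make explicit the disjointness check that the paper calls easily verified. One harmless slip: $B_{s_{+\infty}}\cap B_{s_{-\infty}}$ is not one of the nine pieces $C_{ij}$, none of which need vanish; its triviality is simply what makes the $s$-decomposition direct.
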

	\begin{proof}
		We start by proving the first equality. Let $B := B_{s \leq t} + B_{t < s}$. Note that $B_{t_{+\infty}}, B_{s_{-\infty}} \subseteq B_{s \leq t} \subseteq B$. Furthermore, $B_{s_{+\infty}} \cap B_{t_{+ \infty}}^d \subseteq B_{t < s}$, so that
		\[ B_{s_{+\infty}} = B_{s_{+\infty}} \cap B_{t_{+\infty}} + B_{s_{+\infty}} \cap B_{t_{+ \infty}}^d \subseteq B_{t_{+\infty}} + B_{s_{+\infty}} \cap B_{t_{+ \infty}}^d \subseteq B. \]
		Similarly, $B_{t_{-\infty}} \subseteq B$. Hence, $B_{s_{\pm \infty}}, B_{t_{\pm \infty}} \subseteq B$. Finally, we have that
		\begin{align*}
			B_{s_{\pm \infty}}^d \cap B_{t_{\pm \infty}}^d & = \left(B_{s_{\pm \infty}}^d \cap B_{t_{\pm \infty}}^d\right) \cap (B_{s_\cF \leq t_{\cF}} \oplus B_{t_{\cF} < s_{\cF}}) \\& = B_{s_{\pm \infty}}^d \cap B_{t_{\pm \infty}}^d \cap B_{s_\cF \leq t_\cF} + B_{t_{\pm \infty}}^d \cap B_{s_{\pm \infty}}^d \cap B_{t_{\cF} < s_{\cF}} \\ &\subseteq B.
		\end{align*}
		
		Since the projections bands in $E$ form a Boolean algebra, we have that $B_{s_{\pm \infty}}^d \cap B_{t_{\pm \infty}}^d = (B_{s_{\pm \infty}} + B_{t_{\pm \infty}})^d$. Hence, it follows that we have the inclusion 
		\[
		E = B_{s_{\pm \infty}} + B_{t_{\pm \infty}} + B_{s_{\pm \infty}}^d \cap B_{t_{\pm \infty}}^d \subseteq B. 
		\]
		It is easily verified that $B_{s \leq t} \cap B_{t < s} = \{0\}$ by intersecting the relevant terms in the definitions.
		
		It is clear that $B_{s < t}$, $B_{s = t} \subseteq B_{s \leq t}$. Note that by distributing the intersections over the sums, we obtain
		\[ B_{s = t} = B_{s_{+\infty}} \cap B_{t_{+\infty}} + B_{s_{-\infty}} \cap B_{t_{-\infty}} + B_{s_{\pm \infty}}^d \cap B_{t_{\pm \infty}}^d \cap B_{s_\cF = t_\cF}. \]
		Furthermore, by distributing the intersection over the sum, we have that 
		\[
		B_{t_{+\infty}} = \left(B_{s_{+\infty}} + B_{s_{+\infty}}^d\right) \cap B_{t_{+\infty}} = B_{s_{+\infty}} \cap B_{t_{+\infty}} + B_{t_{+\infty}} \cap B_{s_{+\infty}}^d \subseteq B_{s = t} + B_{s < t}.
		\]
		Similarly, $B_{s_{-\infty}} \subseteq B_{s = t} + B_{s < t}$. Finally, we find that 
		\begin{align*}
			B_{s_{\pm \infty}}^d \cap B_{t_{\pm \infty}}^d \cap B_{s_\cF \leq t_\cF}
			& = B_{s_{\pm \infty}}^d \cap B_{t_{\pm \infty}}^d \cap B_{s_\cF = t_\cF} + B_{s_{\pm \infty}}^d \cap B_{t_{\pm \infty}}^d \cap B_{s_\cF < t_\cF}  \\
			& \subseteq B_{s = t} + B_{s < t}.
		\end{align*}
		Hence $B_{s \leq t} \subseteq B_{s = t} + B_{s < t}$. It is again easily verified that $B_{s < t} \cap B_{s = t} = \{0\}$ by intersecting the relevant terms in the definitions.
	\end{proof}
	
	The following proposition shows that we maintain the characterisations of the bands $B_{s \leq t}$, $B_{s = t}$, and $B_{s < t}$ that we had in $E$. 
	
	\begin{proposition} \label{p:characterize_inequality_bands}
		Let $s, t \in E^{si}$. Then
		\begin{enumerate}[(i)]
			\item $B_{s \leq t}$ is the largest band $B$ such that $\bP(s) \leq \bP(t)$,
			\item $B_{s = t}$ is the largest band $B$ such that $\bP(s) = \bP(t)$, and
			\item $B_{s < t}$ is the largest band $B$ such that $s \ll_B t$. 
		\end{enumerate}
	\end{proposition}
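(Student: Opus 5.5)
The plan is to reduce each statement to the analogous facts in $E$ (\Cref{p:inequality_decomposition}) by splitting $E$ into the three bands $B_{s_{\pm\infty}}$, $B_{t_{\pm\infty}}\cap B_{s_{\pm\infty}}^d$ and $C:=B_{s_{\pm\infty}}^d\cap B_{t_{\pm\infty}}^d$. On $C$ both $s$ and $t$ coincide with their finite parts $s_\cF,t_\cF\in E^u$, so there the question becomes one about the universal completion. Band projections commute with the operations defining $s_{+\infty},s_{-\infty},s_\cF$, by \Cref{p:properties projections sup-inf}(ii),(iii) and the infinite distributivity property. Hence for any band $B$ with projection $\bP$ we have $\bP(s)_{+\infty}=\bP(s_{+\infty})$, and similarly for the other parts.

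For (i), I first show that $\bP_{s\le t}(s)\le\bP_{s\le t}(t)$. By \Cref{l:adding_bands_inequalities} it suffices to check this on each of the three summands in the definition of $B_{s\le t}$.
\begin{itemize}
\item On $B_{t_{+\infty}}$, the element $t$ projects to the largest element of the band (part (iv) of the preceding proposition), which dominates $\bP(s)$ by \Cref{p:properties projections sup-inf}(iv).
\item On $B_{s_{-\infty}}$, symmetrically, $s$ projects to the smallest element.
\item On $B_{s_{\pm\infty}}^d\cap B_{t_{\pm\infty}}^d\cap B_{s_\cF\le t_\cF}$, we have $\bP(s)=\bP(s_\cF)\le\bP(t_\cF)=\bP(t)$ by \Cref{p:inequality_decomposition}(i) applied in $E^u$.
\end{itemize}
For maximality, let $B$ be any band with $\bP(s)\le\bP(t)$; I show $B\subseteq B_{s\le t}$ by showing $B\cap B_{t<s}=\{0\}$ and then using $E=B_{s\le t}\oplus B_{t<s}$. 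On $B'=B\cap B_{t<s}$ we have $\bP'(s)\le\bP'(t)$, and I examine each summand of $B_{t<s}$.
\begin{itemize}
\item On $B_{s_{+\infty}}\cap B_{t_{+\infty}}^d$, $s$ is the top element while $t$ has $t_{+\infty}$-part zero. The inequality then forces $\bP'(t)^+\ge\bP'(s^+)$, which is infinite on that band, while $\bP'(t_{+\infty})=0$. So this piece of $B'$ is zero.
\item The summand involving $t_{-\infty}$ is handled symmetrically.
\item On the finite summand, $s_\cF\le t_\cF$ together with $t_\cF<s_\cF$ forces the band to vanish, by \Cref{p:inequality_decomposition} in $E^u$.
\end{itemize}
The step I expect to be most delicate is the first bullet. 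It requires showing that if $u\le v$ where $u$ is the largest element of a band $D$ and $v$ has $\bP_D(v_{+\infty})=0$, then $D=\{0\}$. I would argue this via \Cref{p:finite_infinite_parts_formulas}(i): monotonicity of $s\mapsto\inf_n\tfrac1n s^+$ gives $\bP_D(v)_{+\infty}\ge \bP_D(u)_{+\infty}=\bP_D(u)$, and the left-hand side is $0$.

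For (ii), apply (i) twice: $B_{s=t}=B_{s\le t}\cap B_{t\le s}$, and a band $B$ satisfies $\bP(s)=\bP(t)$ iff it satisfies both inequalities, hence iff $B$ lies in both bands.

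For (iii), I check directly from \Cref{d:ll} that $s\ll_{B_{s<t}}t$, again piece by piece, using \Cref{p:properties_of_ll}(v) and gluing weak order units across a disjoint sum of bands (\cite[Theorem 32.5]{babyzaanen}).
\begin{itemize}
\item On $B_{t_{+\infty}}\cap B_{s_{+\infty}}^d$, we have $\bP(s_{+\infty})=0$. Moreover $t_{-\infty}$ vanishes there because $B_{t_{+\infty}}\perp B_{t_{-\infty}}$, and the finite-part condition is void since this band lies inside $B_{t_{+\infty}}$.
\item The piece $B_{s_{-\infty}}\cap B_{t_{-\infty}}^d$ is symmetric.
\item On the finite piece, the condition reduces to $\bP(t_\cF-s_\cF)$ being a weak order unit, which is \Cref{p:inequality_decomposition}(iii) in $E^u$.
\end{itemize}
Conversely, if $s\ll_B t$, then $\bP(s_{+\infty})=\bP(t_{-\infty})=0$, so $B\cap B_{s_{+\infty}}=B\cap B_{t_{-\infty}}=\{0\}$. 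On $B\cap B_{s_{-\infty}}^d\cap B_{t_{+\infty}}^d$, the element $\bP(t_\cF-s_\cF)$ is a weak order unit, which puts that piece inside $B_{s_\cF<t_\cF}$. Decomposing $B$ along $B_{t_{+\infty}}$ and $B_{s_{-\infty}}$ then places each part in the corresponding summand of $B_{s<t}$.
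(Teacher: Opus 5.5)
Your proof is correct. It uses the same ingredients as the paper: \Cref{l:adding_bands_inequalities} for the sufficiency half of (i), \Cref{p:inequality_decomposition} applied in $E^u$ to $s_\cF,t_\cF$, and Boolean band algebra. The maximality arguments, however, are organised differently.

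\textbf{Part (i).} The paper intersects $B$ with the nine pieces cut out by $B_{s_{+\infty}},B_{s_{-\infty}},B_{s_{\pm\infty}}^d$ and their $t$-analogues, and places each piece in $B_{s\le t}$. To do so it uses the inclusions $B\cap B_{s_{+\infty}}\subseteq B\cap B_{t_{+\infty}}$ and $B\cap B_{t_{-\infty}}\subseteq B\cap B_{s_{-\infty}}$, which it asserts without proof. You instead invoke the already established decomposition $E=B_{s\le t}\oplus B_{t<s}$ and show $B\cap B_{t<s}=\{0\}$. Since $B\cap B_{s_{+\infty}}\cap B_{t_{+\infty}}^d=\{0\}$ is equivalent to the first of those inclusions, your first two bullets are exactly proofs of the paper's unproved claims. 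The complement trick also removes the nine-case bookkeeping.

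\textbf{Part (iii).} The paper computes the largest band satisfying the weak-order-unit condition, namely $B_{t_{\pm\infty}}+B_{s_{\pm\infty}}+B_{s_\cF<t_\cF}$. It then intersects with $B_{s_{+\infty}}^d\cap B_{t_{-\infty}}^d$ and simplifies. Along the way it tacitly replaces the $\bP_{s_{-\infty}}^d\bP_{t_{+\infty}}^d$ of \Cref{d:ll} by $\bP_{s_{\pm\infty}}^d\bP_{t_{\pm\infty}}^d$. Your direct two-inclusion argument works with the definition as stated and is more transparent.

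\textbf{Two small points to tighten.}
\begin{enumerate}
\item Your key step rests on the commutation $\bP(s)_{+\infty}=\bP(s_{+\infty})$. This needs $\bP$ to preserve countable infima, which \Cref{p:properties projections sup-inf}(ii),(iii) do not provide. It does follow from the formula $\bP(s)=(s\wedge(+\infty_B))\vee(-\infty_B)$ together with the infinite distributivity property. To verify the formula, compare positive and negative parts, using $\bP(x)=x\wedge(+\infty_B)$ for $0\le x\in E^s$.
\item In the first bullet of (i), $\bP_{t_{+\infty}}(s)\le +\infty_{B_{t_{+\infty}}}$ holds simply because $\bP_{t_{+\infty}}(s)$ lies in $B_{t_{+\infty}}^{si}$. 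It does not come from \Cref{p:properties projections sup-inf}(iv).
\end{enumerate}
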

	\begin{proof}
		(i) That $B_{s \leq t}$ satisfies this condition follows by \Cref{l:adding_bands_inequalities} since
		\[ \bP_{t_{+\infty}}(s) \leq \infty_{B_{t_{+\infty}}} = \bP_{t_{+\infty}}(t), \]
		\[ \bP_{s_{-\infty}}(s) = -\infty_{B_{s_{-\infty}}} \leq \bP_{s_{-\infty}}(t), \]
		and for $\bP := \bP_{s_{\pm \infty}}^d \bP_{t_{\pm \infty}}^d \bP_{s_\cF \leq t_\cF}$,
		\[ \bP(s) = \bP(s_\cF) \leq \bP(t_\cF) = \bP(t). \]
		
		Suppose $B$ is a band such that $\bP(s) \leq \bP(t)$. Then the following two inclusions $B \cap B_{s_{+\infty}} \subseteq B \cap B_{t_{+\infty}}$ and $B \cap B_{t_{-\infty}} \subseteq B \cap B_{s_{-\infty}}$ hold. We can decompose $B$ as follows
		\begin{align*}
			B = B \cap (B_{s_{+\infty}} + B_{s_{-\infty}} + B_{s_{\pm \infty}}^d) \cap (B_{t_{+\infty}} + B_{t_{-\infty}} + B_{t_{\pm \infty}}^d).
		\end{align*}
		We have that
		\[ B \cap B_{t_{+\infty}} \cap (B_{s_{+\infty}} + B_{s_{-\infty}} + B_{s_{\pm \infty}}^d) \subseteq B_{t_{+\infty}} \subseteq B_{s \leq t}, \]
		and
		\[ B \cap B_{t_{-\infty}} \cap (B_{s_{+\infty}} + B_{s_{-\infty}} + B_{s_{\pm \infty}}^d) \subseteq B \cap B_{s_{-\infty}} \subseteq B_{s_{-\infty}} \subseteq B_{s \leq t}. \]
		Similarly, 
		\[ B \cap B_{s_{+\infty}} \cap (B_{t_{+\infty}} + B_{t_{-\infty}} + B_{t_{\pm \infty}}^d) \subseteq B_{s \leq t}, \]
		and 
		\[ B \cap B_{s_{-\infty}} \cap (B_{t_{+\infty}} + B_{t_{-\infty}} + B_{t_{\pm \infty}}^d) \subseteq B_{s \leq t}. \]
		Note that we only need to verify $B' := B \cap B_{s_{\pm \infty}}^d \cap B_{t_{\pm \infty}}^d \subseteq B_{s \leq t}$, and this inclusion follows immediately from the fact that
		\[ \bP'(s_\cF) = \bP'(s) \leq \bP'(t) = \bP'(t_\cF), \]
		since this implies $B' \subseteq B_{s_{\pm \infty}}^d \cap B_{t_{\pm \infty}}^d \cap B_{s_{\cF} \leq t_{\cF}} \subseteq B_{s \leq t}$. Therefore, $B \subseteq B_{s \leq t}$. This proves (i), and (ii) follows immediately from (i).
		
		We now prove (iii). By \Cref{p:inequality_decomposition}(iii), $B_{s_\cF < t_\cF}$ is the largest band $B$ such that $\bP(t_F - s_F)$ is a weak order unit of $B_{s_\cF < t_\cF}^u$. Let $B'$ be a band with corresponding band projection $\bP'$ such that $\bP_{t_{\pm \infty}}^d \bP_{s_{\pm \infty}}^d \bP' (t_\cF - s_\cF)$ is a weak order unit of $B_{t_{\pm \infty}}^d \cap B_{s_{\pm \infty}}^d \cap B'$. It follows from 
		\begin{align*}
			B'&= (B_{t_{\pm \infty}}^d \cap B_{s_{\pm \infty}}^d + (B_{t_{\pm \infty}}^d \cap B_{s_{\pm \infty}}^d)^d) \cap B' \\&= (B_{t_{\pm \infty}}^d \cap B_{s_{\pm \infty}}^d + (B_{t_{\pm \infty}} + B_{s_{\pm \infty}})) \cap B' \\& = B_{t_{\pm \infty}}^d \cap B_{s_{\pm \infty}}^d \cap B' + (B_{t_{\pm \infty}} + B_{s_{\pm \infty}}) \cap B',
		\end{align*}
		hence $$B'\subseteq B_{s_\cF < t_\cF} + B_{t_{\pm \infty}} + B_{s_{\pm \infty}}.$$  
		Since the band $B_{s_\cF < t_\cF} + B_{t_{\pm \infty}} + B_{s_{\pm \infty}}$ with corresponding band projection $\bQ$ has the property that $\bP_{t_{\pm \infty}}^d \bP_{s_{\pm \infty}}^d \bQ (t_\cF - s_\cF)$ is a weak order unit of $$B_{t_{\pm \infty}}^d \cap B_{s_{\pm \infty}}^d \cap (B_{s_\cF < t_\cF} + B_{t_{\pm \infty}} + B_{s_{\pm \infty}}),$$
		it follows that $B_{t_{\pm \infty}} + B_{s_{\pm \infty}} + B_{s_\cF < t_\cF}$ is the largest band $B'$ such that $\bP_{t_{\pm \infty}}^d \bP_{s_{\pm \infty}}^d \bP' (t_\cF - s_\cF)$ is a weak order unit of $B_{t_{\pm \infty}}^d \cap B_{s_{\pm \infty}}^d \cap B'$. Consequently, the largest band $B'$ that satisfies the conditions of $\bP'(s_{+\infty}) = \bP'(t_{-\infty}) = 0$ and $\bP_{t_{\pm \infty}}^d \bP_{s_{ \pm \infty}}^d \bP'(t_\cF - s_\cF)$ being a weak order unit of the band given by $(B_{t_{\pm \infty}}^u)^d \cap (B_{s_{ \pm \infty}}^u)^d \cap (B')^u$ is
		\begin{align*}
			(B_{t_{\pm \infty}} + &B_{s_{\pm \infty}} + B_{s_\cF < t_\cF}) \cap B_{s_{+\infty}}^d \cap B_{t_{-\infty}}^d \\
			& = B_{t_{+\infty}} \cap B_{s_{+\infty}}^d + B_{s_{-\infty}} \cap B_{t_{-\infty}}^d + B_{s_{+\infty}}^d \cap B_{t_{-\infty}}^d \cap B_{s_\cF < t_\cF}.
		\end{align*}
		These conditions together are equivalent to saying $s \ll_{B'} t$. Note that since $B^d_{s_{\pm\infty}} \cap B^d_{t_{\pm\infty}} \subseteq B^d_{s_{+\infty}} \cap B^d_{t_{-\infty}}$,
		\begin{align*}
			B^d_{s_{+\infty}} \cap B^d_{t_{-\infty}} &= B^d_{s_{\pm\infty}} \cap B^d_{t_{\pm\infty}} +(B^d_{s_{+\infty}} \cap B^d_{t_{-\infty}}) \cap (B_{s_{\pm\infty}} + B_{t_{\pm\infty}}) \\ &= B^d_{s_{\pm\infty}} \cap B^d_{t_{\pm\infty}} +B_{s_{-\infty}} \cap B^d_{t_{-\infty}}  + B^d_{s_{+\infty}} \cap B_{t_{+\infty}},
		\end{align*}
		so after intersecting with $B_{s_\cF < t_\cF}$ and then adding $B_{t_{+\infty}} \cap B_{s_{+\infty}}^d + B_{s_{-\infty}} \cap B_{t_{-\infty}}^d$ to both sides of the equality,  we get 
		\begin{align*}
			B_{t_{+\infty}} \cap B_{s_{+\infty}}^d &+ B_{s_{-\infty}} \cap B_{t_{-\infty}}^d + B_{s_{+\infty}}^d \cap B_{t_{-\infty}}^d \cap B_{s_\cF < t_\cF} \\& = B_{t_{+\infty}} \cap B_{s_{+\infty}}^d + B_{s_{-\infty}} \cap B_{t_{-\infty}}^d + B_{t_{\pm \infty}}^d \cap B_{s_{\pm \infty}}^d \cap B_{s_{\cF} < t_{\cF}} \\& = B_{s < t}. \qedhere
		\end{align*}
	\end{proof}
	
	\section{Type I improper integrals}
	In this section we will define improper integrals of the first kind, which are generalisations of classical expressions of the form $\int_{-\infty}^{\infty} f(x)dx$, $\int_{a}^{\infty} f(x)dx$, and $\int_{-\infty}^{b} f(x)dx$.
	
	Let $s, t \in E^{si}$ with $s \leq t$. We define the closed interval
	\[ [s, t] := \{x \in E^{si} \colon s \leq x \leq t \}, \]
	and set $[s, t]_E := [s, t] \cap E$. Further, we will define what it means to integrate a function over $[s, t]_E$. Note that $[s, t]_E \neq \varnothing$ implies $s \in E^i$ and $t \in E^s$.
	
	\begin{definition}
		Suppose $s \in E^i$ and $t \in E^s$ with $s \leq t$ and $[s, t]_E \neq \varnothing$. Suppose $f \colon [s, t]_E \to E$ is order bounded and locally band preserving. We say $f$ is \emph{integrable} from $s$ to $t$ if $f$ is integrable on $[a, b]$ for all $a, b \in [s, t]_E$ with $a \leq b$ and $\lim_{a \downarrow s, b \uparrow t} \int_a^b f(x) dx$ exists. We write
		\[ \int_s^t f(x) dx := \lim_{a \downarrow s, b \uparrow t} \int_a^b f(x) dx. \]
	\end{definition}
	
	We can extend this definition as in the proper integral case to incomparable elements as follows.
	
	\begin{definition}
		Let $s, t \in E^{si}$ be such that $s \wedge t \in E^i$, $s \vee t \in E^s$, and $[s \wedge t, s \vee t]_E \neq \varnothing$. Suppose $f \colon [s \wedge t, s \vee t]_E \to E$ is order bounded and locally band preserving. We say $f$ is \emph{integrable} from $s$ to $t$ if $f$ is integrable from $s \wedge t$ to $s \vee t$. In this case, we define
		\[ \int_s^t f(x) dx := \bP_{s < t} \left( \int_{s \wedge t}^{s \vee t} f(x) dx \right) - \bP_{t < s} \left( \int_{s \wedge t}^{s \vee t} f(x) dx \right). \]
	\end{definition}
	
	An application of the improper integrals of type I is an analogue of the integral test for series convergence.
	
	\begin{proposition}
		Suppose $E$ is a Dedekind complete $f$-algebra with multiplicative unit $e$. Let $f \colon [e, + \infty_E]_E \to E$ be locally band preserving, positive, and monotone decreasing. Then $\sum_{n = 1}^\infty f(n e)$ converges if and only if $\int_e^{+ \infty_E} f(x) dx$ exists.
	\end{proposition}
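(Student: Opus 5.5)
The plan is to imitate the classical integral test, with real numbers replaced by elements of $E$ and the limit $b \uparrow +\infty_E$ handled through monotonicity and Dedekind completeness. Since $f \geq 0$, the map $b \mapsto \int_e^b f(x)\,dx$ on $[e,+\infty_E]_E$ is increasing. I would first show this using additivity of the Riemann integral over adjacent intervals and positivity of the integral of a positive function, both taken from the theory in \cite{paper2}. Because the lower endpoint $e$ is fixed, the limit $\lim_{b\uparrow +\infty_E}\int_e^b f(x)\,dx$ exists in $E$ if and only if the increasing net $\{\int_e^b f\}_b$ is order bounded in $E$; its supremum is then the limit. Likewise the partial sums $S_N = \sum_{n=1}^N f(ne)$ are increasing in $N$, so the series converges if and only if $\{S_N\}$ is bounded above in $E$. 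The statement therefore reduces to showing that the two increasing families are mutually order bounded. Integrability of $f$ on every $[a,b]$ comes from \cite{paper2}: monotone, order bounded, locally band preserving functions are integrable. Order boundedness holds because $0 \le f \le f(e)$.

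The key comparison is on each interval $[ne,(n+1)e]$. There, monotonicity gives $f((n+1)e) \le f(x) \le f(ne)$. Since $(n+1)e - ne = e$ is the unit, the one-piece partition yields
\[ f((n+1)e) \le \int_{ne}^{(n+1)e} f(x)\,dx \le f(ne). \]
Summing these and using additivity gives $S_{N+1} - f(e) \le \int_e^{(N+1)e} f(x)\,dx \le S_N$.

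If the series converges with sum $S$, then every $\int_e^{Ne} f \le S$. For an arbitrary $b \in [e,+\infty_E]_E$, I still need to dominate $\int_e^b f$ by some $\int_e^{Ne} f$. Conversely, if the integral exists with value $I$, then $S_{N+1} \le f(e) + I$, so the partial sums are bounded and the series converges.

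I expect the main obstacle to be this last domination, because $b$ is a general element of $E$ and need not lie below any $Ne$. The archimedean property only gives $b \wedge Ne \uparrow b$. My first attempt would be to avoid comparing $b$ with $Ne$ directly. Instead I would use that the limit as $b \uparrow +\infty_E$ is taken along a directed net, and that $b \vee Ne$ is cofinal, so it suffices to bound $\int_e^{b\vee Ne} f$. For this I would decompose with the band projections $\mathbb{P}_{b\le Ne}$ and $\mathbb{P}_{Ne<b}$, using that $f$ is locally band preserving, so that integrals commute with band projections. On $B_{b\le Ne}$ the bound by $\int_e^{Ne} f$ is immediate.

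On the complement, the contribution is bounded by letting $N\to\infty$: since $f$ is decreasing, the integral over $[Ne, b\vee Ne]$ is dominated by something that shrinks as $N$ grows. Here I would use that $\mathbb{P}_{Ne<b}\downarrow 0$ as $N\to\infty$, by the archimedean property applied to $b^+$, to get order convergence. If this gets delicate, a fallback is to work in the universal completion. There one can write $b \le \lfloor b\rfloor + e$ bandwise, using the representation $C^\infty(K)$ and step functions built from band projections, and then apply the interval comparison band by band with these "integer-valued" elements $ke$.
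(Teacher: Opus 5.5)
Your proposal is correct. Its first half matches the paper's argument: the one-piece partition bounds $f((n+1)e)\le\int_{ne}^{(n+1)e}f\le f(ne)$, the sandwich $S_{N+1}-f(e)\le\int_e^{(N+1)e}f\le S_N$, and monotone convergence in the Dedekind complete space $E$.

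Where you differ is the step you flagged as the main obstacle, namely dominating $\int_e^b f$ for a general $b\ge e$. The paper uses exactly the fact you set aside, $b\wedge ne\uparrow b$:
\begin{itemize}
\item since $e\le b\wedge ne\le ne$ and $f\ge 0$, one gets $\int_e^{b\wedge ne}f\le\int_e^{ne}f$;
\item additivity and the trivial upper-sum bound give $0\le\int_e^b f-\int_e^{b\wedge ne}f=\int_{b\wedge ne}^b f\le f(e)(b-b\wedge ne)$;
\item the right-hand side decreases to $0$, because $e$ is a weak order unit and multiplication by $f(e)$ is order continuous.
\end{itemize}
Hence $\int_e^b f\le\sup_n\int_e^{ne}f$, and the supremum over $b$ is $\lim_n\int_e^{ne}f$.

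Your route through $b\vee Ne$ and the projections $\mathbb{P}_{b\le Ne}$, $\mathbb{P}_{Ne<b}\downarrow 0$ also works. However, it needs an extra fact from \cite{paper2} that you only assert: for locally band preserving $f$, $\mathbb{P}(c)=\mathbb{P}(c')$ should imply $\mathbb{P}\int_e^c f=\mathbb{P}\int_e^{c'}f$.

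That fact is not needed. Your own remark about the interval $[Ne,b\vee Ne]$, using additivity alone, gives $\int_e^b f\le\int_e^{b\vee Ne}f=\int_e^{Ne}f+\int_{Ne}^{b\vee Ne}f\le\int_e^{Ne}f+f(e)(b-Ne)^+$. Since $b\vee Ne-Ne=(b-Ne)^+=b-b\wedge Ne$, this is literally the paper's estimate. Letting $N\to\infty$ then yields $\int_e^b f\le S$ directly.

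So the paper's version uses only additivity, positivity and the one-piece upper sum. You can drop the band-projection decomposition, the cofinality reduction, and the $C^\infty(K)$ "floor function" fallback entirely.
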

	\begin{proof}
		Since $f$ is locally band preserving, monotone, and order bounded, $f$ is integrable on $[e, ne]$ for every $n \in \bN$ by \cite[Proposition 4.7]{paper2}. For any $n \in \bN$, we have that
		\[ \int_{ne}^{(n+1)e} f(x) dx \leq \int_{ne}^{(n+1)e} f(n e) dx = f(ne). \]
		For $n \geq 2$, we have
		\[ f(ne) = \int_{(n - 1)e}^{n e} f(n e) dx \leq \int_{(n - 1) e}^{n e} f(x) dx. \]
		Therefore, for $N \in \bN$,
		\[ \int_{e}^{(N+1) e} f(x) dx \leq \sum_{n = 1}^N f(n e) \leq f(e) + \int_{e}^{N e} f(x) dx. \]
		It follows that $\lim_{n \to \infty} \int_{e}^{ne} f(x) dx$ exists if and only if $\sum_{n = 1}^\infty f(n e)$ exists. What remains to be shown is that $\lim_{n \to \infty} \int_{e}^{ne} f(x) dx$ exists if and only if $\int_e^{+ \infty_E} f(x) dx$ exists.  For any $b \in E$ with $b \geq e$ and $n \in \bN$,
		\[ 0 \leq \int_e^b f(x) dx - \int_e^{b \wedge ne} f(x) dx = \int_{b \wedge ne}^b f(x) dx \leq f(e) (b - (b \wedge ne)). \]
		Therefore,
		\[ \int_e^{b \wedge ne} f(x) dx \to \int_e^b f(x) dx \]
		as $n \to \infty$ since $e$ is a weak order unit. If $$L := \lim_{n \to \infty} \int_{e}^{ne} f(x) dx = \sup_{n \in \bN} \int_e^{ne} f(x) dx$$ exists, then
		\[ \int_e^{b \wedge ne} f(x) dx \leq \int_e^{ne} f(x) dx \leq L, \]
		and thus by taking the limit as $n \to \infty$, $\int_e^b f(x) dx \leq L$. Therefore, \[\int_e^{+ \infty_E} f(x) dx = \sup_{b \geq e} \int_e^b f(x) dx = L.\qedhere\]
	\end{proof}
	
	\begin{example}
		Let $E$ be a Dedekind complete unital $f$-algebra and let $r$ be an invertible element. Define $f \colon [r, + \infty_E]_E \to E$ by $f(x) = x^{-2}$ and $F \colon [r, + \infty_E]_E \to E$ by $F(x) = -x^{-1}$. Both $f$ and $F$ are well-defined---since if $x \geq r$ in $E$, then $x$ is invertible---and locally band preserving.
		
		A net $(x_\alpha)$ of invertible elements converging in order to an invertible element $x$ satisfies $x_\alpha^{-1} \to x^{-1}$ if and only if $\{\abs{x_\alpha}^{-1} \colon \alpha \geq \alpha_0\}$ is order bounded for some $\alpha_0$, see \cite[Remark 3.11]{differentiation}. For every $x \in [r, + \infty_E] \cap E$, we have that $0 \leq x^{-1} \leq r^{-1}$. It follows that $f$ and $F$ are order continuous and order bounded.
		
		By \cite[Lemma 3.14]{differentiation}, $F$ is order differentiable with $F' = f$. For any $r \leq b \in E$, $f$ is integrable on $[r, b]$ by \cite[Proposition 4.7]{paper2} since it is order bounded, monotone and locally band preserving.  By the Fundamental Theorem of Calculus \cite[Theorem 6.3]{paper2},
		\[ \int_r^b f(x) dx = F(b) - F(r) = r^{-1} - b^{-1}. \]
		
		We will show that as $r \leq b \uparrow + \infty_E$, we have $b^{-1} \downarrow 0$. Let $l := \inf_{b \geq r} b^{-1}$. For every $n \in \bN$, $r \leq n r$ and hence $l \leq (nr)^{-1} = n^{-1} r^{-1}$. Therefore, $l = 0$. It follows that
		\[ \int_r^{+ \infty_E} f(x) dx = r^{-1}. \]
	\end{example}
	
	\section{Type II improper integrals}
	Type II improper integrals allow us to integrate functions that are unbounded or undefined at finitely many points in the order interval $[a,b]$. We explain how to construct these integrals in this setting.
	
	\begin{definition}
		Let $f \colon (a, b] \to E$ be locally band preserving. Suppose for every $c \in (a, b]$, $f$ is order bounded and integrable on $[c, b]$. If the limit exists, we define
		\[ \int_a^b f(x) dx := \lim_{c \downarrow a, c \in (a, b]} \int_c^b f(x) dx. \]
	\end{definition}
	
	\begin{definition}
		Let $f \colon [a, b) \to E$ be locally band preserving. Suppose for every $c \in [a, b)$, $f$ is order bounded and integrable on $[a, c]$. If the limit exists, we define
		\[ \int_a^b f(x) dx := \lim_{c \uparrow b, b \in [a, b)} \int_a^c f(x) dx. \]
	\end{definition}
	
	\begin{definition}
		Let $f \colon [a, c) \cup (c, b] \to E$ be locally band preserving. Suppose for every $d_1 \in [a, c)$ and $d_2 \in (c, b]$ that $f$ is order bounded and integrable on both $[a, d_1]$ and on $[d_2, b]$. If the limit exists, we define
		\[ \int_a^b f(x) dx := \lim_{d_1 \uparrow c, d_2 \downarrow c} \left( \int_a^{d_1} f(x) dx + \int_{d_2}^b f(x) dx \right). \]
	\end{definition}
	
	
	\begin{example}
		Let $E$ be a universally complete $f$-algebra with multiplicative unit $e$. For $x \in E$, we define the generalised inverse $x^*$ of $x$ to be the multiplicative inverse of $x$ in the again universally complete band $B_x$. Note that $x$ is invertible in $B_x$ as it is a weak order unit. Define $f, F \colon [0, e] \to E$ by
		\[ f(x) := (\sqrt{x})^* \quad \text{and} \quad F(x) := 2 \sqrt{x}. \]
		Let $0 \ll r \ll e$. Then $F$ is locally band preserving, order continuous on $[r, e]$ and order differentiable on $(r, e)$ with $F' = f$. Also, $f$ is locally band preserving and uniformly continuous on $[r, e]$ and hence order bounded and integrable by \cite[Theorem 5.2]{paper1} and \cite[Proposition 4.8]{paper2}. Therefore,
		\[ \int_r^e f(x) dx = F(e) - F(r) = 2e - 2\sqrt{r}. \]
		Since as $r \downarrow 0$, we have that $\sqrt{r} \downarrow 0$, it follows that
		\[ \int_0^e f(x) dx = 2e. \]
	\end{example}
	
	\section{Integrating power series}
	
	Suppose in this section that $E$ is universally complete and let $e$ denote the multiplicative unit. We discuss integration of power series and show that this is done term-by-term, but first some preparatory results are needed.
	
	The following theorem is a generalisation of the analogue of the root test given in \cite[Theorem 3.6(i)]{series}, where we do away with the requirement that $(\abs{a_n}^{\frac{1}{n}})_{n \in \bN}$ be order bounded since suprema can now be taken in $E^s$. The proof is essentially still the same and hence omitted.
	
	\begin{theorem} \label{l:improved_root_test}
		Let $\sum_{n = 0}^\infty a_n$ be a series in $E$. If $\limsup_{n \to \infty} \abs{a_n}^{\frac{1}{n}}$ exists in $E$ and $\limsup_{n \to \infty} \abs{a_n}^{\frac{1}{n}} \ll e$ then the series $\sum_{n = 0}^\infty a_n$ converges absolutely in order.
	\end{theorem}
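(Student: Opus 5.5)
We set $L := \limsup_{n \to \infty} \abs{a_n}^{\frac{1}{n}} = \inf_{m \in \bN} \sup_{n \geq m} \abs{a_n}^{\frac1n}$. The suprema are taken in $E^s$, and by hypothesis the infimum lies in $E$ and satisfies $L \ll e$. The plan is to follow the classical root test: insert a ``ratio'' strictly between $L$ and $e$, get a geometric tail bound, and then sum the geometric series in the universally complete $f$-algebra $E$.

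\textbf{Step 1: choose $q$ and reduce to a bounded tail.} Put $q := \tfrac12(L^+ + e)$, which lies in $E$. Since $e - L$ is a weak order unit and $L^+ \le L \vee 0$, we get $e - q = \tfrac12(e - L^+) \geq \tfrac12 ((e-L)\wedge e)$, so $e - q$ is a weak order unit and $0 \le q \ll e$. The difficulty is that $L \ll q$ does not hold uniformly: it only gives that $\sup_{n \ge m}\abs{a_n}^{1/n} \le q$ ``eventually'' in a band-wise sense, with an $m$ depending on the band. To handle this, for each $m$ let $u_m := \sup_{n\ge m}\abs{a_n}^{1/n} \in E^s$ and let $\bQ_m$ be the band projection onto $B_{u_m \le q}$. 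These are defined via the $\leq$-band of \Cref{p:characterize_inequality_bands}; since $u_m \in E^s$, only the $+\infty$ part causes trouble, and it lies outside $B_{u_m\le q}$. The sequence $u_m$ decreases, so $\bQ_m$ increases. We then show $\sup_m \bQ_m = I$. If not, there is a nonzero band $B$ on which $\bP_B u_m \not\le \bP_B q$ for every $m$; passing to smaller bands using the decomposition $E = B_{u_m\le q}\oplus B_{q<u_m}$, one finds a nonzero band on which $u_m > q$ for all $m$, hence $\bP L \geq \bP q$, contradicting $\bP(q - L) \gg 0$ there. This step, which turns $L \ll q$ into an exhausting increasing family of bands on which the tail is dominated by $q$, is the main obstacle. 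It is exactly where the earlier proof in \cite{series} used order boundedness, and I expect it to require the band calculus of Section 5.

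\textbf{Step 2: geometric domination on each band.} On $B_m := \bQ_m(E)$ and for $n \ge m$ we have $\bQ_m\abs{a_n}^{1/n} \le \bQ_m q$. Multiplication in $f$-algebras preserves order on positive elements (\Cref{p:properties_multiplication_sup_comp}(iv) in $E^s$, and the usual rule in $E$), so $\bQ_m\abs{a_n} \le \bQ_m q^n$. Since $e - q$ is a weak order unit of the universally complete $E$, it is invertible, and $\sum_n q^n = (e-q)^{-1}$ converges in order, because its partial sums $(e - q^{N+1})(e-q)^{-1}$ increase and are bounded. Hence $\sum_{n} \bQ_m \abs{a_n}$ converges on $B_m$.

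\textbf{Step 3: glue.} The bands $B_m$ increase to $E$. We decompose into the pairwise disjoint bands $B_m \cap B_{m-1}^d$, on each of which the partial sums of $\sum \abs{a_n}$ are bounded by (finitely many terms) $+ (e-q)^{-1}$. These local bounds are pairwise disjoint, so their supremum exists in the universally complete $E$. Therefore the increasing partial sums of $\sum\abs{a_n}$ are order bounded in $E$ and converge in order by Dedekind completeness, which gives absolute order convergence of $\sum a_n$.
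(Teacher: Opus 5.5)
Your argument is correct. It is the band-wise version of the classical root test that the paper has in mind when it defers to the proof of \cite[Theorem 3.6(i)]{series} with the tail suprema taken in $E^s$: the midpoint $q$, the bands $B_{u_m\le q}$ exhausting $E$, domination by $q^n$ on each band, and gluing over disjoint bands via universal completeness.

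As a side remark, the $E^s$-bookkeeping in your Step 1 is lighter than you expect. Since $u_m=\abs{a_m}^{1/m}\vee u_{m+1}$ with $\abs{a_m}^{1/m}\in E^+$, \Cref{l:supremum_order_continuous} shows that all $u_m$ have the same infinite part $w=\inf_n \tfrac1n u_m$. Then $0\le w\le L\in E$ together with $2w=w$ forces $w=0$, so every $u_m$ already lies in $E=E^u$, and the whole argument runs inside $E$ exactly as in the order-bounded case.
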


	\begin{lemma}\label{L:r in Omega}
		Let $f(x) := \sum_{n = 0}^\infty a_n (x-c)^n$ with spectrum of convergence $\Omega$ and radius of convergence $\rho$. If $r \in E^+$ such that $r \ll \rho$, then $r \in \Omega$.
	\end{lemma}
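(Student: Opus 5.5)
The plan is to reduce the statement to the root test, \Cref{l:improved_root_test}, applied to the series $\sum_{n=0}^\infty a_n r^n$, i.e.\ the power series evaluated at $x=c+r$. I take $\Omega$ to be the set of points where the series converges (absolutely) in order and $\rho$ to be given by the Cauchy--Hadamard formula. That is, with $L:=\limsup_{n\to\infty}\abs{a_n}^{\frac1n}\in E^s_+$ (which always exists since suprema may be taken in $E^s$), $\rho$ is the generalized inverse of $L$ in $E^{si}$. So $\rho=+\infty$ on the band $B_L^d$, $\rho=0$ on the band $B_{L_{+\infty}}$, and $\rho=(L_\cF)^*$ on the remaining band $B_{L_\cF}$. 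These identifications should be the first thing checked against the paper's definitions; if $\rho$ is instead defined as a supremum over $\Omega$, the argument below is the comparison step relating the two descriptions.

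The first step is a multiplicativity computation. Since $r\ge 0$ and $E$ is a universally complete $f$-algebra, positive $n$th roots exist and are multiplicative, so $\abs{a_n r^n}^{\frac1n}=\abs{a_n}^{\frac1n}r$. The multiplication on $E^s_+$ from \cite{AzouziSI1} distributes over finite suprema and infima by \Cref{p:properties_multiplication_sup_comp}. It is also order continuous from below by its definition as a supremum, and I would check order continuity from above for the decreasing tails using $r\in E$. Together these give
\[ \limsup_{n\to\infty}\abs{a_nr^n}^{\frac1n}=\inf_m\sup_{n\ge m}\abs{a_n}^{\frac1n}r=Lr . \]

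The second step is to show that $Lr\in E$ and $Lr\ll e$; this is the main obstacle, and I would handle it band by band using \Cref{l:adding_bands_inequalities}-style decompositions.
\begin{itemize}
\item On $B_{L_{+\infty}}$ we have $\rho=0$. The hypothesis $r\ll\rho$ would then force $-r\ge 0$ to be a weak order unit of that band, which is impossible unless the band is $\{0\}$. Hence $L_{+\infty}=0$, $L=L_\cF\in E$, and $Lr\in E$.
\item On $B_L^d$ we have $Lr=0$, and $e-0=e$ is a weak order unit there.
\item On $B_L$, $\rho-r$ is a weak order unit by \Cref{d:ll} and \Cref{p:characterize_inequality_bands}(iii). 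Multiplying by the weak order unit $L$ of $B_L$ gives $L(\rho-r)=\bP_L(e)-Lr$, and a product of weak order units of a band is again a weak order unit in an $f$-algebra.
\end{itemize}
Gluing the two pieces shows that $e-Lr$ is a weak order unit of $E$, i.e.\ $Lr\ll e$.

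Finally, \Cref{l:improved_root_test} yields that $\sum_n a_nr^n$ converges absolutely in order, so $r\in\Omega$. The delicate points are the upper order continuity of multiplication in $E^s$ and the argument that $r\ll\rho$ kills the band where $L$ is infinite.
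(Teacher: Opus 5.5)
Your argument is essentially the paper's. Both take $L=\limsup_{n}\abs{a_n}^{1/n}\in E^s$ with $\rho$ its generalized inverse. Both split $E$ into the band where $\rho$ is infinite (your $B_L^d$, the paper's $B_{\rho_{+\infty}}$) and the band where it is finite (your $B_L$, the paper's $B_{\rho_\cF}$). Both multiply the weak order unit $\bP(\rho-r)$ of the finite band by $L$ to see that $e-rL$ is a weak order unit, and then apply \Cref{l:improved_root_test}.

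Your first bullet is the paper's remark that $\rho\gg r\ge 0$ makes $\rho$ a weak order unit (\Cref{p:properties_of_ll}(ii) and \Cref{p:wou_gg_zero}), i.e.\ $L_{+\infty}=0$. The only real difference is how you get $rL\in E$. You obtain it from $L\in E$, while the paper obtains it from $0\le rL\le \rho L=\bP_{\rho_\cF}(e)$, via \Cref{p:properties_multiplication_sup_comp}(iv) and \cite[Theorem 5.20]{differentiation}. Both routes work. To restrict $r\ll\rho$ to $B_L$, the right tool is \Cref{p:properties_of_ll}(v), as in the paper, rather than \Cref{p:characterize_inequality_bands}(iii). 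The upper order continuity of the multiplication that you flag is also used tacitly by the paper, in the identity $\limsup_n(\abs{a_n}^{1/n}\abs{x-c})=L\abs{x-c}$.

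There is one genuine gap, though it is easily closed: your reading of $\Omega$ at the very end. $\Omega$ is a set of radii. The paper concludes $r\in\Omega$ only after showing that the series converges uniformly in order on the whole interval $[c-r,c+r]$. Convergence at the single point $x=c+r$ is not what is required. Under your ``set of points'' reading, $r\in\Omega$ would even refer to the point $x=r$ rather than $x=c+r$.

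What you have actually proved, absolute order convergence of $\sum_n\abs{a_n}r^n$, is exactly the input needed, so the repair is one line. For $x\in[c-r,c+r]$ one has $\abs{x-c}\le r$, so $\sum_n a_n(x-c)^n$ converges absolutely. Moreover, with a bound independent of $x$,
\[ \Bigl|\sum_{n=0}^m a_n(x-c)^n-f(x)\Bigr|\le\sum_{n=m+1}^\infty\abs{a_n}r^n\downarrow_m 0 . \]
With this uniform estimate added, your argument yields $r\in\Omega$.
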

	\begin{proof}
		Let $L := \limsup_{n \to \infty} \abs{a_n}^{\tfrac{1}{n}}$ in $E^s$, then $\rho$ is the generalized inverse of $L$ in $E^s$ and $\rho L = \rho_\cF L_\cF = \bP_{\rho_\cF}(e)$ by \cite[Theorem 5.20]{differentiation}. Since $0 \leq r \leq \rho$, it follows from \Cref{p:properties_multiplication_sup_comp}(iv) that
		\[ 0 \leq rL \leq \rho L = \bP_{\rho_\cF} (e). \]
		Thus, $rL \in E$ with $0 \leq rL \leq e$. By \Cref{p:properties_of_ll}(ii) and \Cref{p:wou_gg_zero}, $\rho$ is a weak order unit of $E$ since $\rho \gg r \geq 0$, so that $E = B_{\rho_{+\infty}} \oplus B_{\rho_\cF}$. Now $\bP_{\rho_\cF}(r) \ll_{B_{\rho_\cF}} \bP_{\rho_\cF}(\rho) = \bP_{\rho_\cF}(\rho_\cF)$ by \Cref{p:properties_of_ll}(v), so that $\bP_{\rho_\cF}(\rho - r) = \bP_{\rho_\cF} (\rho_\cF - r)$ is a weak order unit of $B_{\rho_\cF}$. Since $L_\cF$ is an invertible element of $B_{\rho_\cF}$, we have that $\bP_{\rho_\cF}(L_\cF(\rho_\cF - r))$ is also a weak order unit of $B_{\rho_\cF}$. Therefore,
		\begin{align*}
			e - rL 
			& = \bP_{\rho_\infty}(e - rL) + \bP_{\rho_\cF}(e - rL) \\
			& = \bP_{\rho_\infty}(e) + \bP_{\rho_\cF}(L_\cF (\rho_\cF - r))
		\end{align*}
		is a weak order unit of $E$. Thus, $rL \ll e$. Let $x \in [c - r, c + r]$. Then $\abs{x - c} \leq r$, and hence
		\[ \limsup_{n \to \infty} \left( \abs{a_n}\abs{x - c}^n \right)^{\tfrac{1}{n}} = \limsup_{n \to \infty} (\abs{a_n}^{\tfrac{1}{n}} \abs{x - c} ) = L \abs{x - c} \ll e. \]
		
		Therefore, $f(x)$ converges absolutely in order on $[c - r, c + r]$ by \Cref{l:improved_root_test}. For each $m \in \bN$ and all $x \in [c - r, c + r]$,
		\[ \left|{\sum_{n = 0}^m a_n(x-c)^n - f(x)}\right| \leq \sum_{n = m+1}^\infty \abs{a_n} r^n \downarrow_m 0. \]
		Thus, $f(x)$ converges uniformly in order on $[c - r, c + r]$ and hence $r \in \Omega$.
	\end{proof}
	
	\begin{lemma}
		Let $f(x) := \sum_{n = 0}^\infty a_n (x-c)^n$ with spectrum of convergence $\Omega$ and $r \in \Omega$. Then $f$ is uniformly continuous on $[c - r, c + r]$ and hence integrable.
	\end{lemma}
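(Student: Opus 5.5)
The plan is to mimic the classical proof that a uniform limit of uniformly continuous functions is uniformly continuous. Since $r \in \Omega$, the partial sums $f_m(x) := \sum_{n=0}^m a_n (x-c)^n$ converge to $f$ uniformly in order on $[c-r, c+r]$. So there is a net (indeed a sequence) $\varepsilon_m \downarrow 0$ in $E^+$ such that $\abs{f_m(x) - f(x)} \leq \varepsilon_m$ for all $x \in [c-r,c+r]$. This is exactly the estimate obtained at the end of the proof of \Cref{L:r in Omega}.

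First, I would show that each polynomial $f_m$ is uniformly continuous on $[c-r,c+r]$. For $x, y$ in this interval, factor $(x-c)^n - (y-c)^n$ as
\[ (x-y)\sum_{k=0}^{n-1}(x-c)^k(y-c)^{n-1-k}. \]
Each factor in the sum satisfies $\abs{x-c},\abs{y-c} \leq r$, and in an $f$-algebra $\abs{uv} = \abs{u}\abs{v}$. This gives
\[ \abs{f_m(x) - f_m(y)} \leq \abs{x-y}\, M_m, \qquad M_m := \sum_{n=1}^m n\abs{a_n} r^{n-1} \in E^+, \]
so $f_m$ is Lipschitz on the interval and therefore uniformly continuous in the order sense used in \cite{paper1}.

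Second, I would combine these via the triangle inequality:
\[ \abs{f(x) - f(y)} \leq 2\varepsilon_m + M_m\abs{x-y} \quad \text{for every } m. \]
This has to be put in the form of the paper's definition of uniform (order) continuity, which I expect to be phrased with nets or with a controlling element $\abs{x-y} \to 0$. The intended argument is that for any net of pairs with $\abs{x_\alpha - y_\alpha} \to 0$ in order, we get $\limsup_\alpha \abs{f(x_\alpha)-f(y_\alpha)} \leq 2\varepsilon_m$ for every $m$; letting $m\to\infty$ then gives $0$. In parallel, I would check that $f$ is locally band preserving: each $f_m$ is, since band projections are multiplicative in the $f$-algebra, and order limits preserve this property because band projections are order continuous. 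I would also check that $f$ is order bounded by $\sum_n \abs{a_n} r^n$, which exists by the absolute convergence established in \Cref{L:r in Omega}.

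Finally, integrability follows from \cite[Proposition 4.8]{paper2}, which says that a locally band preserving, uniformly continuous function on an order interval is integrable. This is the same citation the paper uses in the Type II example. The main obstacle I anticipate is matching the exact definition of uniform continuity in \cite{paper1}. If it is formulated with $\varepsilon$–$\delta$ style band-wise estimates rather than nets, the $\limsup$ step has to be replaced by choosing a suitable $m$ locally on bands. In that case I would use that $\varepsilon_m \downarrow 0$, together with Dedekind completeness, to pass to an infimum over $m$ of $2\varepsilon_m + M_m\delta$.
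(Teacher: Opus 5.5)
Your proposal takes the same route as the paper, whose proof is a compressed version of your argument. The partial sums are uniformly continuous, and $r\in\Omega$ gives uniform order convergence on $[c-r,c+r]$, so $f$ is uniformly continuous. Integrability then follows from \cite[Proposition 4.8]{paper2}, with order boundedness coming from \cite[Theorem 5.2]{paper1} as elsewhere in the paper. Your explicit Lipschitz constant $M_m$, the triangle-inequality estimate and the check that $f$ is locally band preserving supply details the paper leaves implicit.

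One side justification is wrong as stated, though it is easily repaired. You take $\sum_n |a_n| r^n$ as an order bound for $f$, citing the absolute convergence in \Cref{L:r in Omega}. You also identify your $\varepsilon_m$ with the tail estimate at the end of that proof. But that lemma only treats $r\ll\rho$. Membership $r\in\Omega$ gives uniform order convergence on $[c-r,c+r]$, not absolute convergence at $r$, and $\sum_n|a_n|r^n$ can fail to exist. Already for $E=\mathbb{R}$, the series $\sum_{k\ge 1}(-1)^k x^{2k}/k$ converges uniformly on $[-1,1]$, since it is alternating with tails bounded by $1/(m+1)$, while $\sum_k 1/k$ diverges.

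So the $\varepsilon_m$ must come from the definition of $\Omega$. The order bound should instead be $|f(x)|\le \sum_{n=0}^m|a_n|r^n+\varepsilon_m$ on $[c-r,c+r]$ for any fixed $m$, or you can simply invoke \cite[Theorem 5.2]{paper1}.

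In your $\varepsilon$--$\delta$ fallback, the band-wise choice is indeed what is needed. When $\varepsilon$ is only a weak order unit, no single $m$ need satisfy $2\varepsilon_m\le\varepsilon/2$. So $\delta$ has to be assembled from pieces adapted to $M_m$ on disjoint bands where $\varepsilon_m$ is small relative to $\varepsilon$, and then glued using Dedekind completeness.
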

	\begin{proof}
		Let $f_N(x) := \sum_{n = 0}^\infty a_n (x - c)^n$. Then each $f_N$ is uniformly continuous, and $f_N \to f$ uniformly on $[c - r, c + r]$. Therefore, $f$ is uniformly continuous on $[c - r, c + r]$
	\end{proof}
	
	By \cite[Theorem 6.2]{differentiation}, the power series $f(x) := \sum_{n = 0}^\infty a_n (x - c)^n$ and $g(x) := \sum_{n = 0}^\infty \frac{1}{n+1}a_n (x - c)^{n+1}$ have the same radius of convergence $\rho$. If $\rho$ dominates a positive invertible element of $E$, then $g'(x) = f(x)$ for all $x \in (c - \rho, c + \rho)$ by \cite[Theorem 6.7]{differentiation}. Consequently, we call the power series $g(x)$ the \emph{formal antiderivative} of the power series $f(x)$.
	
	\begin{proposition}
		Let 
		\[
		f(x) := \sum_{n = 0}^\infty a_n (x-c)^n \quad \mbox{and}\quad g(x) := \sum_{n = 0}^\infty \tfrac{1}{n+1}a_n (x - c)^{n+1} 
		\]
		with the same radius of convergence $\rho$ that dominates a positive invertible element of $E$, say $s$. For $a, b \in (c - \rho, c + \rho) \cap E$, we have that $f$ is integrable on $[a \wedge b, a \vee b]$ and
		\[ \int_a^b f(x) dx = g(b) - g(a). \]
	\end{proposition}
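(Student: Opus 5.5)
The plan is to reduce the statement to the Fundamental Theorem of Calculus \cite[Theorem 6.3]{paper2}, applied to $g$ on the interval $[a\wedge b, a\vee b]$. Whatever the order relation between $a$ and $b$, the integral $\int_a^b$ is then recovered by splitting with band projections.

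\textbf{Step 1: integrability of $f$.} Let $a,b \in (c-\rho,c+\rho)\cap E$ and put $u := a\wedge b$ and $v := a\vee b$. First I would find $r \in E^+$ with $r \ll \rho$ and $[u,v] \subseteq [c-r,c+r]$. The natural candidate is $r := \abs{u-c}\vee\abs{v-c}$. Since $c-\rho \ll u$ and $v \ll c+\rho$, we have $\abs{u-c}\ll\rho$ and $\abs{v-c}\ll\rho$, so \Cref{p:properties_of_ll}(iv) gives $r \ll \rho$. Then $r\in\Omega$ by \Cref{L:r in Omega}. The preceding lemma shows that $f$ is uniformly continuous on $[c-r,c+r]$, hence order bounded and integrable there. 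Restricting to the subinterval $[u,v]$ preserves integrability (cite \cite{paper2}). Locally band preservation of $f$ should follow because each partial sum is a polynomial in an $f$-algebra and band projections commute with order limits.

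\textbf{Step 2: apply FTC to $g$.} By \cite[Theorem 6.2]{differentiation}, $g$ has radius of convergence $\rho$, so Step 1 applied to $g$ gives its uniform continuity, and hence order continuity, on $[u,v]$. Since $\rho$ dominates the invertible element $s$, \cite[Theorem 6.7]{differentiation} gives $g'=f$ on $(c-\rho,c+\rho)$, and in particular on $(u,v)$. Then \cite[Theorem 6.3]{paper2} yields
\[
\int_u^v f(x)\,dx = g(v)-g(u).
\]

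\textbf{Step 3: the $a,b$ incomparable case.} Use the definition of $\int_a^b$ by band projections, $\bP_{a<b}$ minus $\bP_{b<a}$, with the integral vanishing on $B_{a=b}$. Because $g$ is locally band preserving, on $B_{a<b}$ we have $\bP(g(v)-g(u))=\bP(g(b)-g(a))$, and on $B_{b<a}$ the sign flips. On $B_{a=b}$ both sides are $0$. Summing over the decomposition $E=B_{a<b}\oplus B_{b<a}\oplus B_{a=b}$ gives $\int_a^b f = g(b)-g(a)$.

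\textbf{Main obstacle.} I expect the hardest part to be Step 1, namely getting $r\ll\rho$ from $c-\rho\ll a$ and the companion inequalities. The difficulty is that $\rho\in E^s$ may be infinite on some bands, so this has to be checked against \Cref{d:ll}. On $B_{\rho_{+\infty}}$ the relation is automatic. On the finite part it reduces to the ordinary weak-order-unit statement $\bP_{\rho_\cF}(\rho-\abs{a-c})$ in $E$.
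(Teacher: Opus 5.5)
\textbf{There is a genuine gap in Step 2.} You invoke the Fundamental Theorem of Calculus on $[u,v]$ using only that $g'=f$ on $(u,v)$. By definition, $(u,v)=\{x\in E\colon u\ll x\ll v\}$, and this set is empty unless $v-u$ is a weak order unit: $u\ll x\ll v$ forces $v-u=(v-x)+(x-u)$ to be one. Here $v-u=|a-b|$, which vanishes on $B_{a=b}$. So whenever $a$ and $b$ agree on a nonzero band, the differentiability hypothesis you pass to the FTC is vacuous, and no correct FTC can deduce $\int_u^v f=g(v)-g(u)$ from it. For instance, in $E=\mathbb{R}^2$ with $u=(0,0)$ and $v=(1,0)$, the interval $(u,v)$ is empty. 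Yet the first coordinates of the two sides are $\int_0^1 f_1$ and $g_1(1)-g_1(0)$, and nothing in your hypotheses relates them. Note that your $r=|u-c|\vee|v-c|$ equals $|a-c|\vee|b-c|$. This is exactly the paper's $r$ with the term $\tfrac12 s$ removed, and that term is what you are missing.

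\textbf{How the paper avoids this.} You use the hypothesis on $s$ only to obtain $g'=f$ from \cite[Theorem 6.7]{differentiation}. The paper uses it a second time, setting $r:=|a-c|\vee|b-c|\vee\tfrac12 s$.
\begin{itemize}
\item Since $s$ is invertible, $r$ is a weak order unit. Hence $c-r\ll c+r$, and $(c-r,c+r)$ is a genuine open interval.
\item Since $\rho\ge s$, also $\tfrac12 s\ll\rho$. So \Cref{p:properties_of_ll}(iv) still gives $r\ll\rho$, and Lemma~\ref{L:r in Omega} gives $r\in\Omega$.
\item The FTC is then applied to $g$, which is order continuous on $[c-r,c+r]$ and order differentiable on the nondegenerate interval $(c-r,c+r)$ with $g'=f$; and $a,b\in[c-r,c+r]$.
\end{itemize}
You already have $g'=f$ on all of $(c-\rho,c+\rho)$. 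What you need is a nondegenerate interval containing $a$ and $b$ on which to apply the FTC.

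\textbf{An alternative repair.} You could keep your $r$ and split $E=B_{u<v}\oplus B_{u=v}$. Both sides vanish on $B_{u=v}$, and on $B_{u<v}$ the element $\bP(v-u)$ is a weak order unit of the band. That route, however, needs a band-relative version of the FTC, which you would have to justify.

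\textbf{The rest of the proposal.} Your Step 3, which reduces incomparable $a,b$ to $[u,v]$ via $\bP_{a<b}$, $\bP_{b<a}$ and local band preservation of $g$, is correct and more explicit than the paper. The point you flagged as the main obstacle, $r\ll\rho$, is routine.
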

	\begin{proof}
		Let $a, b \in (c - \rho, c + \rho) \cap E$. Let $r := \abs{a - c} \vee \abs{b - c} \vee \frac12 s$. Then $r$ is a weak order unit of $E$ and $r \ll \rho$ by \Cref{p:properties_of_ll}(iv), so that $r \in \Omega$ by Lemma~\ref{L:r in Omega}. Now $f$ is locally band preserving, uniformly continuous and hence order bounded and integrable on $[c - r, c + r]$ by \cite[Theorem 5.2]{paper1} and \cite[Proposition 4.8]{paper2}. Also, $g$ is locally band preserving, order continuous on $[c - r, c + r]$ and order differentiable on $(c - r, c + r)$ with $g' = f$. Therefore, by \cite[Theorem 6.3]{paper1},
		\[ \int_a^b g(x) dx = f(b) - f(a). \qedhere \]
	\end{proof}
	
	\noindent\textbf{Acknowledgements.}
	The authors would like to express their gratitude to the European Union Erasmus+ ICM programme for facilitating productive visits between Leiden University and the University of Pretoria.
	
	\bibliographystyle{alpha}
	\bibliography{refs.bib}

@article {paper1,
    AUTHOR = {E. Kikianty and L. Naude and M. Roelands and C. Schwanke},
     TITLE = {Classical theorems from analysis for locally band preserving
              functions on {D}edekind complete {$\Phi $}-algebras},
   JOURNAL = {Positivity},
  FJOURNAL = {Positivity. An International Mathematics Journal Devoted to
              Theory and Applications of Positivity},
    VOLUME = {30},
      YEAR = {2026},
    NUMBER = {2},
     PAGES = {Paper No. 31, 26},
      ISSN = {1385-1292,1572-9281},
   MRCLASS = {46A40 (26A24)},
  MRNUMBER = {5059522},
       DOI = {10.1007/s11117-026-01188-6},
       URL = {https://doi.org/10.1007/s11117-026-01188-6},
}

@misc{paper2,
      title={The {R}iemann integral on {D}edekind complete $f$-algebras}, 
      author={E. Kikianty and L. Naude and M. Roelands and C. Schwanke},
      year={2026},
      eprint={2604.26623},
      archivePrefix={arXiv},
      primaryClass={math.FA},
      url={https://arxiv.org/abs/2604.26623}, 
}

@article {TroitskyPolavarapu,
    AUTHOR = {A.R. Polavarapu and V.G. Troitsky},
     TITLE = {A representation of sup-completion},
   JOURNAL = {Proc. Amer. Math. Soc.},
  FJOURNAL = {Proceedings of the American Mathematical Society},
    VOLUME = {152},
      YEAR = {2024},
    NUMBER = {8},
     PAGES = {3403--3411},
      ISSN = {0002-9939,1088-6826},
   MRCLASS = {46A40},
  MRNUMBER = {4767271},
MRREVIEWER = {Christos\ E.\ Kountzakis},
       DOI = {10.1090/proc/16796},
       URL = {https://doi-org.uplib.idm.oclc.org/10.1090/proc/16796},
}

@book {AliprantisBurkinshaw,
    AUTHOR = {C.D. Aliprantis and O. Burkinshaw},
     TITLE = {Locally solid {R}iesz spaces with applications to economics},
    SERIES = {Mathematical Surveys and Monographs},
    VOLUME = {105},
   EDITION = {Second},
 PUBLISHER = {American Mathematical Society, Providence, RI},
      YEAR = {2003},
     PAGES = {xii+344},
      ISBN = {0-8218-3408-8},
   MRCLASS = {46A40 (46N10 91B50)},
  MRNUMBER = {2011364},
MRREVIEWER = {Pedro\ Jim\'enez Guerra},
       DOI = {10.1090/surv/105},
       URL = {https://doi.org/10.1090/surv/105},
}

@book {Donner,
    AUTHOR = {K. Donner},
     TITLE = {Extension of positive operators and {K}orovkin theorems},
    SERIES = {Lecture Notes in Mathematics},
    VOLUME = {904},
 PUBLISHER = {Springer-Verlag, Berlin-New York},
      YEAR = {1982},
     PAGES = {xii+182},
      ISBN = {3-540-11183-2},
   MRCLASS = {46A22 (41A36 46B30 46E05 47B99)},
  MRNUMBER = {653635},
MRREVIEWER = {I.\ Namioka},
}

@book{babyzaanen,
author = {A.C. Zaanen},
year = {1997},
title = {Introduction to Operator Theory in Riesz Spaces},
publisher = {Springer}
}

@book{zaanen1,
author = {W.A.J. Luxemburg and A.C. Zaanen},
year = {1971},
title = {Riesz Spaces I},
publisher = {North-Holland Publishing Co.}
}

@book{zaanen2,
author = {A.C. Zaanen},
year = {1983},
title = {Riesz Spaces II},
publisher = {North-Holland Publishing Co.}
}

@article{series,
title = {Series and power series on universally complete complex vector lattices},
author = {M. Roelands and C. Schwanke},
journal = {J. Math. Anal. Appl.},
volume = {473},
number = {2},
pages = {680--694},
year = {2019},
publisher = {Elsevier}
}

@phdthesis{depagter,
    author = {B. de {P}agter},
    sortname = {Pagter},
    title = {$f$-algebras and Orthomorphisms},
    school  = {Rijksuniversiteit te Leiden},
    year    = {1981},
}

@article {differentiation,
    AUTHOR = {Roelands, M. and Schwanke, C.},
     TITLE = {Differentiable, holomorphic, and analytic functions on complex
              {$\Phi $}-algebras},
   JOURNAL = {J. Math. Anal. Appl.},
  FJOURNAL = {Journal of Mathematical Analysis and Applications},
    VOLUME = {541},
      YEAR = {2025},
    NUMBER = {1},
     PAGES = {Paper No. 128671, 26},
      ISSN = {0022-247X,1096-0813},
   MRCLASS = {30G99 (32A65 46A40)},
  MRNUMBER = {4776394},
       DOI = {10.1016/j.jmaa.2024.128671},
       URL = {https://doi.org/10.1016/j.jmaa.2024.128671},
}

@article {Azouzi,
    AUTHOR = {Y. Azouzi},
     TITLE = {Completeness for vector lattices},
   JOURNAL = {J. Math. Anal. Appl.},
  FJOURNAL = {Journal of Mathematical Analysis and Applications},
    VOLUME = {472},
      YEAR = {2019},
    NUMBER = {1},
     PAGES = {216--230},
      ISSN = {0022-247X,1096-0813},
   MRCLASS = {46A40},
  MRNUMBER = {3906369},
MRREVIEWER = {Christopher\ Schwanke},
       DOI = {10.1016/j.jmaa.2018.11.019},
       URL = {https://doi.org/10.1016/j.jmaa.2018.11.019},
}

@article {AzouziSI1,
title = {The sup-completion of a Dedekind complete vector lattice},
journal = {Journal of Mathematical Analysis and Applications},
volume = {506},
number = {2},
pages = {125651},
year = {2022},
issn = {0022-247X},
doi = {https://doi.org/10.1016/j.jmaa.2021.125651},
url = {https://www.sciencedirect.com/science/article/pii/S0022247X21007307},
author = {Y. Azouzi and Y. Nasri},
}
	
\end{document}